\documentclass[11pt]{amsart}

\usepackage{amscd,latexsym,amsthm,amsfonts,amssymb,amsmath,amsxtra}
\usepackage[mathscr]{eucal}
\usepackage{pictexwd,dcpic}
\usepackage[normalem]{ulem}
\usepackage{hyperref}
\usepackage{stmaryrd}

\renewcommand\theequation{\thesection.\arabic{equation}}

\newcommand{\BC}{{\mathbb {C}}}

\newcommand{\BN}{{\mathbb {N}}}

\newcommand{\BR}{{\mathbb {R}}}

\newcommand{\BZ}{{\mathbb {Z}}}

\newcommand{\CA}{{\mathcal {A}}}
\newcommand{\CB}{{\mathcal {B}}}
\newcommand{\CC}{{\mathcal {C}}}
\newcommand{\CE}{{\mathcal {E}}}
\newcommand{\CF}{{\mathcal {F}}}

\newcommand{\CK}{{\mathcal {K}}}
\newcommand{\CL}{{\mathcal {L}}}

\newcommand{\CO}{{\mathcal {O}}}
\newcommand{\CP}{{\mathcal {P}}}

\newcommand{\CR}{{\mathcal {R}}}
\newcommand{\CS}{{\mathcal {S}}}
\newcommand{\CT}{{\mathcal {T}}}
\newcommand{\CU}{{\mathcal {U}}}

\newcommand{\CX}{{\mathcal {X}}}
\newcommand{\CY}{{\mathcal {Y}}}

\newcommand{\Fa}{{\mathfrak {a}}}

\newcommand{\Fg}{{\mathfrak {g}}}
\newcommand{\Fh}{{\mathfrak {h}}}

\newcommand{\Fk}{{\mathfrak {k}}}

\newcommand{\Fn}{{\mathfrak {n}}}

\newcommand{\Fp}{{\mathfrak {p}}}

\newcommand{\Ft}{{\mathfrak {t}}}

\newcommand{\End}{{\mathrm{End}}}

\newcommand{\Hom}{{\mathrm{Hom}}}

\newcommand{\tr}{{\mathrm{tr}}}

\newcommand{\back}{\backslash}

\newtheorem{thm}{Theorem}[section]
\newtheorem{cor}[thm]{Corollary}
\newtheorem{lem}[thm]{Lemma}

\newtheorem{prop}[thm]{Proposition}

\newtheorem {ques/conj}[thm]{Question/Conjecture}

\newtheorem{defn}[thm]{Definition}
\newtheorem{rmk}[thm]{Remark}

\makeatletter

\newcommand{\Rmnum}[1]{\expandafter\@slowromancap\romannumeral #1@}
\makeatother

\begin{document}
\renewcommand{\theequation}{\arabic{equation}}
\numberwithin{equation}{section}

\title[Multiplicity formula]{A Multiplicity Formula of K-types}

\author{Chen Wan}
\address{Department of Mathematics \& Computer Science\\
Rutgers University–Newark\\
Newark, NJ 07102, USA}
\email{chen.wan@rutgers.edu}

\begin{abstract}
In this paper, by proving a simple local trace formula for real reductive groups, we prove a multiplicity formula of K-types for all irreducible representations of real reductive groups. This multiplicity formula expresses the K-characters in terms of the Harish-Chandra characters.
\end{abstract}

\subjclass[2010]{Primary 22E50}

\keywords{Multiplicity of K-types, Representation of Real Reductive Groups}

\maketitle

\tableofcontents

\section{Introduction}
Let $G$ be a real reductive group and $K=G^\theta$ be a maximal compact subgroup of $G$ where $\theta$ is a Cartan involution. Let $\pi$ be an irreducible representation (i.e. an irreducible Casselman-Wallach  representation) of $G$ and $\tau$ be an irreducible representation of $K$, the goal of this paper is to study the multiplicity of $K$-type
$$m(\pi,\tau)=\dim(\Hom_{K}(\pi,\tau)).$$

By Section 8.2 of \cite{Wal}, there exists a continuous linear functional $\Theta_{K,\pi}$ on $C^\infty(K)$, called the K-character, such that 
$$\Theta_{K,\pi}(f)=\sum_{\tau\in Irr(K)} m(\pi,\tau)\int_K \theta_\tau(k)f(k)dk$$
where $\theta_\tau(k)=\tr(\tau(k))$ is the character of $\tau$. In particular, we have $m(\pi,\tau)=\Theta_{K,\pi}(\bar{\theta}_\tau)$. By Theorem 8.2.2 of \cite{Wal}, we know that the K-character $\Theta_{K,\pi}$ is equal to the Harish-Chandra character $\theta_\pi$ of $\pi$ on the regular semisimple elements $K\cap G_{reg}$. In our previous paper \cite{Wan19}, we proposed a conjectural multiplicity formula for general spherical pairs. A special case of our conjectural multiplicity formula is a conjectural multiplicity formula for K-types (note that $(G,K)$ is a spherical pair). The main result of this paper is to prove this conjectural multiplicity formula of K-types. This multiplicity formula also implies a formula of the K-character in terms of the Harish-Chandra character. Roughly speaking, we will show that the K-character is only supported on certain conjugacy classes of $K$ (including all the conjugacy classes in $K\cap G_{reg}$) and its value on those conjugacy classes is equal to the average of regular germs of the Harish-Chandra character.

\subsection{Main results}
Let $\CS(G,K)$ be the set of $K$-conjugacy classes ($x\in K$ is any element in the conjugacy class) such that the pair $(G_x,K_x)$ is a minimal spherical pair. Here $G_x$ (resp. $K_x$) is the neutral component of the centralizer of $x$ in $G$ (resp. $K$). We refer the reader to Section 2.6 of \cite{Wan19} for the definition of minimal spherical pair. In the case of K-types, the pair $(G_x,K_x)$ is a minimal spherical pair if and only if $G_x$ is split modulo the center (i.e. the difference between the rank of $G_x$ and the split rank of $G_x$ is equal to the difference between the rank of $Z_{G_x}$ and the split rank of $Z_{G_x}$ where $Z_{G_x}$ is the center of $G_x$). In Section 3 we will define a measure on the set $\CS(G,K)$. Let $\theta$ be a quasi-character on $G$ (we refer the reader to Section 2 for the definition of quasi-character) and $\theta_K$ be a smooth function on $K$ that is invariant under conjugation, define
$$m_{geom,G,K}(\theta,\theta_K):=\int_{\CS(G,K)}^{\ast} \frac{1}{c(G_t,K\cap G_t,\BR)\cdot |Z_K(t):K\cap G_t|}D^{G}(t)^{1/2}\Delta(t)^{-1/2}\theta_{K}(t) c_\theta(t)dt$$
where 
\begin{itemize}
\item $c_\theta(t)$ is the average of the regular germs of $\theta$ at $t$ defined in Section \ref{section quasi-character};
\item the constant $c(G_t,K\cap G_t,\BR)$ is the number of connected components of $B_t\cap K\cap G_t$ ($B_t$ is any Borel subgroup of $G_t$) defined in Section 5 of \cite{Wan19};
\item $D^G$ (resp. $D^K$) is the Weyl determinant of $G$ (resp. $K$) and $\Delta(t)=D^G(t)D^K(t)^{-2}$;
\item $Z_K(t)$ is the centralizer of $t$ in $K$.
\end{itemize}
The integral above is not necessarily absolutely convergent and it needs to be regularized. We refer the reader to Section 3 for details. The main theorem of this paper is a multiplicity formula for K-types.

\begin{thm}\label{main thm}
For any irreducible representation $\pi$ of $G$ and for any irreducible representation $\tau$ of $K$, we have the multiplicity formula
$$m(\pi,\tau)=m_{geom,G,K}(\theta_\pi,\bar{\theta}_\tau).$$
\end{thm}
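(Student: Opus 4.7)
My plan is to prove Theorem \ref{main thm} by establishing a simple local trace formula for the pair $(G,K)$ and matching its spectral and geometric expansions, in the spirit of the strategy used by Waldspurger and Beuzart-Plessis for the Gan--Gross--Prasad conjecture, and by the author in \cite{Wan19} for general spherical pairs.

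The main ingredient is a bilinear distribution $J(f,\phi)$ attached to a compactly supported $f\in C_c^\infty(G)$ and a conjugation-invariant $\phi\in C^\infty(K)$, built roughly as the $K$-character pairing
$$J(f,\phi) \;=\; \int_K \phi(k)\, K_f(k)\, dk,$$
where $K_f$ is the relevant kernel function for the action of $f$ on functions related to $K\backslash G$. Expanding $J(f,\phi)$ spectrally via Plancherel, together with the identity $\Theta_{K,\pi}(\phi)=\sum_\tau m(\pi,\tau)\int_K\theta_\tau(k)\phi(k)\,dk$, would produce an expansion whose terms are $\Theta_{K,\pi}(\phi)\cdot\widehat f(\pi)$ weighted by Plancherel measure. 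Expanding $J(f,\phi)$ geometrically, by the Weyl integration formula followed by a descent to $K$-conjugacy classes, should produce an expression matching
$$\int_{\CS(G,K)}^{\ast} \frac{\phi(t)\,c_{\theta_f}(t)}{c(G_t,K\cap G_t,\BR)\,|Z_K(t):K\cap G_t|}\, D^G(t)^{1/2}\Delta(t)^{-1/2}\,dt,$$
where $\theta_f$ is the quasi-character on $G$ associated to $f$ via matrix coefficients.

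To isolate a fixed irreducible $\pi$, I would take $f$ to run through a sequence of (truncated) pseudo-coefficients of $\pi$, so that $\widehat f$ concentrates on $\pi$ and the associated quasi-character $\theta_f$ tends to $\theta_\pi$. With $\phi=\bar\theta_\tau$, the spectral side then reduces to $\Theta_{K,\pi}(\bar\theta_\tau)=m(\pi,\tau)$ while the geometric side tends to $m_{geom,G,K}(\theta_\pi,\bar\theta_\tau)$, yielding the desired identity.

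I anticipate two main technical obstacles. The first is justifying convergence and exchange of limits in the pseudo-coefficient approximation, given that the regularized integral defining $m_{geom,G,K}$ is not absolutely convergent; this will require careful truncation arguments in the spirit of Arthur's local trace formula. The second and more substantive obstacle is proving that only the conjugacy classes in $\CS(G,K)$ contribute, i.e.\ that when $(G_t,K_t)$ fails to be a minimal spherical pair the relevant regular germ coefficient vanishes after averaging over $K$-conjugation. This will require a careful Harish-Chandra descent at each semisimple element of $K$, together with an analysis of regular germs on $G_t$, paralleling the germ-vanishing mechanism that appears in the spherical pair framework of \cite{Wan19}. Once these two steps are controlled, matching the spectral and geometric expansions completes the proof.
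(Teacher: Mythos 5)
Your overall strategy -- compare the spectral and geometric expansions of a local trace formula attached to the pair $(G,K)$ -- is indeed the engine of the paper's proof, but as written your proposal has a gap that is fatal for general irreducible $\pi$: you never reduce to tempered representations. A trace formula whose test functions are (strongly cuspidal) Schwartz or compactly supported functions has a spectral side supported on the tempered dual, so no choice of ``(truncated) pseudo-coefficients'' can isolate a non-tempered irreducible representation (e.g.\ a finite-dimensional representation of a noncompact $G$, or a general Langlands quotient); such $\pi$ simply do not appear in the spectral expansion. The paper's first step is therefore to prove that \emph{both} sides of the multiplicity formula are compatible with parabolic induction -- $m(I_P^G(\sigma),\omega)=m(\sigma,\omega|_{K_M})$ by Iwasawa decomposition and Frobenius reciprocity, and the analogous identity for $m_{geom}$ via Proposition \ref{germ parabolic induction} -- and then to invoke the Langlands classification (Proposition \ref{Grothendiect group}) to reduce to tempered, and by induction on Levi subgroups to \emph{elliptic}, representations. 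Only for elliptic $\pi$ does one have the separation statement you want (Corollary 5.7.2(iv) of \cite{B15}: a strongly cuspidal $f_\pi$ with $\theta_{f_\pi}(\pi')\neq 0$ iff $\pi'=\pi$); this replaces your limiting pseudo-coefficient argument and avoids any exchange of limits with the regularized geometric integral. Relatedly, the paper works with strongly cuspidal $f\in\CC_{scusp}(G/A_G^\circ)$ precisely so that $I(f,\theta_K)$ converges absolutely (Proposition \ref{convergence}) and no Arthur-type truncation is needed; for general $f\in C_c^\infty(G)$ the double integral diverges and the ``simple'' trace formula is false.

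The second substantive issue is that your geometric expansion (``Weyl integration formula followed by a descent to $K$-conjugacy classes'') is not a proof and, taken literally, fails: after the Weyl integration on $K$ the outer integral over $K\backslash G$ diverges term by term, and the identification of the support $\CS(G,K)$ and of the germ coefficients is exactly where the work lies. The paper's route is: Harish-Chandra descent to reduce to a neighborhood of $1$, then descent to the Lie algebra, then Fourier transform plus the Cartan decomposition of $\Fp$ (Lemma \ref{Cartan decomposition}) to prove the Lie-algebra expansion -- but this Fourier-transform step only works for $\theta_K\equiv 1$. The general $\theta_K$ is then recovered by a homogeneity/bootstrap argument on $QC(\Fg)\hat\otimes C^\infty(\Fk)^K$ (Lemma \ref{homogeneous projective}) combined with the already-proved spectral side. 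Your proposal does not anticipate this obstruction, and there is no obvious way to carry out your ``descent to $K$-conjugacy classes'' directly for nontrivial $\theta_\tau$. You also do not address the failure of the Gelfand property ($m(\pi,\tau)$ can exceed $1$), which forces the $\CL_\pi/L_\pi$ formalism with the $m(\pi)$ denominators in the spectral expansion rather than a naive Plancherel argument.
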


\begin{rmk}
The expression $m_{geom,G,K}(\theta_\pi,\bar{\theta}_\tau)$ is called the geometric multiplicity.
\end{rmk}

\begin{cor}
The K-character $\Theta_{K,\pi}$ is equal to
$$\Theta_{K,\pi}(f)=m_{geom,G,K}(\theta_\pi,\theta_f),\;f\in C^\infty(K)$$
where $\theta_f(x)=\int_K f(k^{-1}xk)$ is the orbital integral of $f$.
\end{cor}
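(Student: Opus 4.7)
The plan is to reduce the corollary to Theorem \ref{main thm} by summing over $K$-types. Starting from the defining series
$$\Theta_{K,\pi}(f)=\sum_{\tau\in Irr(K)} m(\pi,\tau)\int_K \theta_\tau(k)f(k)\,dk,$$
I would substitute the main theorem to obtain
$$\Theta_{K,\pi}(f)=\sum_{\tau\in Irr(K)} m_{geom,G,K}(\theta_\pi,\bar\theta_\tau)\cdot c_\tau,\qquad c_\tau:=\int_K \theta_\tau(k)f(k)\,dk.$$

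The next step is to pull the sum inside $m_{geom,G,K}(\theta_\pi,\cdot)$. Inspection of the defining formula in Section 3 shows that, for a fixed quasi-character $\theta_\pi$, the expression $m_{geom,G,K}(\theta_\pi,\theta_K)$ is $\BC$-linear in $\theta_K$, the second argument entering only through the evaluation $\theta_K(t)$ along $\CS(G,K)$. Granted the continuity of this linear functional on the Fr\'echet space of smooth conjugation-invariant functions on $K$, one may exchange summation and regularized integration to get
$$\Theta_{K,\pi}(f)=m_{geom,G,K}\!\left(\theta_\pi,\;\sum_{\tau\in Irr(K)} c_\tau\,\bar\theta_\tau\right).$$

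It remains to recognize the inner sum as $\theta_f$. Reindexing by the contragredient $\tau\mapsto\bar\tau$ rewrites the series as $\sum_\tau \left(\int_K \bar\theta_\tau(k)f(k)\,dk\right)\theta_\tau$, which is precisely the Peter--Weyl expansion of the class function $x\mapsto \int_K f(k^{-1}xk)\,dk=\theta_f(x)$; since $f\in C^\infty(K)$ and $K$ is compact, convergence takes place in the $C^\infty$-topology of $K$. Substituting yields the claim $\Theta_{K,\pi}(f)=m_{geom,G,K}(\theta_\pi,\theta_f)$.

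The step I expect to require the most care is the continuity of $m_{geom,G,K}(\theta_\pi,\cdot)$ in the $C^\infty$-topology, since the integral in Section 3 is only conditionally convergent and is defined via a regularization that must be shown compatible with uniform smooth approximation. If a direct continuity argument proves awkward, an alternative route is to note that the identity is already known for the test functions $f=\bar\theta_\tau$ (the special case $\theta_f=\bar\theta_\tau$ reduces by $m(\pi,\tau)=\Theta_{K,\pi}(\bar\theta_\tau)$ to Theorem \ref{main thm} itself), then extend by linearity to finite sums of characters, and finally pass to general $f\in C^\infty(K)$ using both the continuity of $\Theta_{K,\pi}$ guaranteed by \cite{Wal}, Section 8.2, and density of finite character sums in $C^\infty(K)^K$ together with the yet-to-be-checked continuity of the geometric side.
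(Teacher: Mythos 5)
Your proposal is correct and is essentially the argument the paper leaves implicit: expand $\Theta_{K,\pi}(f)$ over $K$-types, apply Theorem \ref{main thm} to each $m(\pi,\tau)$, move the sum inside the second argument of $m_{geom,G,K}(\theta_\pi,\cdot)$, and identify $\sum_\tau c_\tau\bar\theta_\tau$ with the class function $\theta_f$ via Peter--Weyl. The continuity you flag as the delicate point is not actually left open: Proposition \ref{prop geometric side convergence} states that the regularized limit defines a continuous linear form on $QC(G)\times C^\infty(K)^K$ with the topology on $C^\infty(K)^K$ induced from $C^\infty(K)$, and since $f$ is smooth its character expansion converges in that topology, so the interchange is justified.
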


\subsection{Organization of the paper and remarks on the proofs}
In this section we will briefly explain the proof of Theorem \ref{main thm}. The first step is to show that both the multiplicity and the geometric multiplicity behave nicely under parabolic induction. For the multiplicity, this is a direct consequence of the Iwasawa decomposition and the Frobenius reciprocity. For the geometric multiplicity, this is a direct consequence of Proposition \ref{germ parabolic induction}. Combine this with the Langlands classification, we only need to prove the multiplicity formula for tempered representations. This also proves the multiplicity formula for all the representations when $G$ does not have discrete series ($\iff$ $G$ does not have a maximal elliptic torus).

To prove the multiplicity formula for tempered representations, we follow the method developed by Waldspurger in his proof of the local Gan--Gross--Prasad conjecture (\cite{W10}, \cite{W12}). To be specific, we will prove a local trace formula for the K-types which would imply the multiplicity formula for tempered representations. We would like to point out that Waldspurger's method was later adapted by other people for some other cases including the unitary Gan--Gross--Prasad model (Beuzart-Plessis), the Ginzburg--Rallis model (W.), the Galois model (Beuzart-Plessis), the Shalika model (Beuzart-Plessis--W.) and some strongly tempered spherical pairs (W.--Zhang). Based on these works, the author proposed a conjectural multiplicity formula and a conjectural local trace formula for general spherical varieties in \cite{Wan19}.

Compared with all the existing cases above, there are two difficulties in the proof of the local trace formula for K-types. The first difficulty is on the spectral expansion. For all the known cases above, when the spherical pair is strongly tempered (recall that we say a spherical pair $(G,H)$ is strongly tempered if all the tempered matrix coefficients of $G$ are integrable over $H$ up to modulo the center), the proof of the spectral expansion used the fact that the multiplicity is less or equal to 1 (i.e. the spherical pair is a Gelfand pair). But this is not true for the case of K-types. To solve this issue, we combine the method in the known strongly tempered cases with the proof of the spectral expansion in the Galois model case due to Beuzart-Plessis \cite{B18} (which does not use the Gelfand pair condition but is only for discrete series instead of tempered representations). We refer the reader to Section 4 for details.

The second difficulty is on the geometric expansion. The most important step in the proof of the geometric expansion is to descend the test function to the Lie algebra and then take the Fourier transform. However, this only works when the K-type is trivial (i.e. $\theta_K\equiv 1$). In general when the K-type is nontrivial, it is not clear how to take the Fourier transform on the Lie algebra. Instead, we will first prove the geometric expansion when the $K$-type is trivial. Then we will prove the general case by using the trivial $K$-type case and the spectral side of the trace formula. We refer the reader to Section 5 and 6 for details.

This paper is organized as follows. In Section 2 we introduce basic notation and conventions used in this paper. We will also discuss some facts about quasi-characters and strongly cuspidal functions.  In Section 3 we state the multiplicity formula and the local trace formula. We also show that the trace formula implies the multiplicity formula. We will postpone the proof of a technical lemma to the Appendix. In Section 4 we will prove the spectral side of the trace formula. In Section 5 we will study the analogue of the trace formula for Lie algebra. Finally in Section 6 we will prove the geometric side of the trace formula.

\subsection{Acknowledgments}
We thank Rapha\"el Beuzart-Plessis for many helpful discussions, for the helpful comments on the first draft of this paper, and for the proof of Statement (3) in Appendix A. We also like to thank Fangyang Tian, David Vogan and Lei Zhang for many helpful conversations. The work of the author is partially supported by the NSF grant DMS-2000192 and DMS-2103720.

\section{Preliminary}\label{sec preliminary}

\subsection{Notation}\label{sec notation}
Fix a nontrivial additive character $\psi:\BR\rightarrow \BC^{\times}$. Let $G$ be a connected real reductive group, $K$ be a maximal compact subgroup of $G$, $K^\circ$ be the neutral component of $K$ which is a maximal connected compact subgroup of $G$, $\theta$ be the Cartan involution of $G$ with $K=G^\theta$, $\Fg$ be the Lie algebra of $G$ and $\Fk$ be the Lie algebra of $K^\circ$. Let $Z_G$ be the center of $G$, $A_G$ be the maximal split torus of $Z_G$. For any topology group $X$ we use $X^\circ$ to denote the neutral component of $X$.

We use $G_{ss}$, $G_{reg}$ (resp. $\Fg_{ss}$, $\Fg_{reg}$) to denote the set of semisimple and regular semisimple elements of $G$ (resp. $\Fg$) and let $\CT(G)$ (resp. $\CT_{ell}(G)$) be a set of representatives of maximal tori (resp. maximal elliptic tori) of $G$. For $x\in G_{ss}$ (resp. $X\in \Fg_{ss}$), let $Z_G(x)$ (resp. $Z_G(X)=G_X$) be the centralizer of $x$ (resp. $X$) in $G$ and let $G_x$ be the neutral component of $Z_G(x)$. Similarly, for any abelian subgroup $T$ of $G$, let $Z_G(T)$ be the centralizer of $T$ in $G$ and let $G_T$ be the neutral component of $Z_G(T)$. We say $x\in G_{ss}$ is elliptic if $x$ belongs to a maximal elliptic torus of $G$ and we use $G_{ell}\subset G_{ss}$ to denote the set of elliptic elements of $G$. We also use $G_{ell,reg}=G_{ell}\cap G_{reg}$ to denote the set of regular elliptic elements of $G$. Similarly we can define $\Fg_{ell}$ and $\Fg_{ell,reg}$. Note that the set $G_{ell,reg}$ is non-empty if and only if $G$ has discrete series. Let $G_{ss}/conj$ be the set of semisimple conjugacy classes of $G$. Finally, for $x\in G_{ss}$ (resp. $X\in \Fg_{ss}$), let $D^G(x)=|\det(1-Ad(x))_{|\Fg/\Fg_x}|$ (resp. $D^G(X)=|\det(Ad(X))_{|\Fg/\Fg_X}|$) be the Weyl determinant.

We say a subset $\Omega\subset G$ (resp. $\omega\subset \Fg$) is $G$-invariant if it is invariant under the $G$-conjugation, we say the set is completely G-invariant if it is $G$-invariant and for any $X\in \Omega$ (resp. $X\in \omega$), the semisimple part of $X$ also belongs to $\Omega$ (resp. $\omega$). For any subset $\Omega\subset G$ (resp. $\omega\subset \Fg$), we define the $G$-invariant subset
$$\Omega^G:=\{g^{-1}\gamma g\mid g\in G,\gamma\in \Omega\},\;\omega^G:=\{g^{-1}\gamma g\mid g\in G,\gamma\in \omega\}.$$
We say a $G$-invariant subset $\Omega$ of $G$ (resp. $\omega$ of $\Fg$) is compact modulo conjugation if there exists a compact subset $\Gamma$ of $G$ (resp. $\Fg$) such that $\Omega\subset \Gamma^G$ (resp. $\omega\subset \Gamma^G$). A $G$-domain on $G$ (resp. $\Fg$) is an open subset of $G$ (resp. $\Fg$) invariant under the $G$-conjugation.

We denote by $X(G)$ the group of $\BR$-rational characters of $G$. Define $\Fa_G=\Hom(X(G),\BR)$, and let $\Fa_{G}^{\ast}=X(G)\otimes_{\BZ} \BR$ be the dual of $\Fa_G$. We define a homomorphism $H_G:G\rightarrow \Fa_G$ by $H_G(g)(\chi)=\log(|\chi(g)|)$ for every $g\in G$ and $\chi\in X(G)$. This is a surjective homomorphism.

For a Levi subgroup $M$ of $G$, let $\CP(M)$ be the set of parabolic subgroups of $G$ whose Levi part is $M$, $\CL(M)$ be the set of Levi subgroups of $G$ containing $M$, and $\CF(M)$ be the set of parabolic subgroups of $G$ containing $M$. We have a natural decomposition $\Fa_M=\Fa_{M}^{G}\oplus \Fa_G$. We denote by $proj_{M}^{G}$ and $proj_G$ the projections of $\Fa_M$ to each factors. For each $P\in \CP(M)$, we can associate a positive chamber $\Fa_{P}^{+}\subset \Fa_M$. For each $P=MU$, we can also define a function $H_P:G\rightarrow \Fa_M$ by $H_P(g)=H_M(m_g)$ where $g=m_g u_g k_g$ is the Iwasawa decomposition of $g$.

In this paper we shall freely use the notion of log-norms on algebraic varieties  as defined in Section 1.2 of \cite{B15}. For every algebraic variety $X$ over $\BR$, we will fix a log-norm $\sigma_X$ on it. For $C>0$, we use $X[<C]$ to denote the set $\{x\in X|\;\sigma_X(x)<C\}$. In particular, we have log-norms $\sigma_G$ and $\sigma_{\Fg}$ on $G$ and $\Fg$ respectively. It will be convenient to assume, as we may, that $\sigma_G$ is left and right $K$-invariant. Following Harish-Chandra, we can also define the height function $\Vert \cdot\Vert_G$ on $G$ (resp. $\Vert \cdot\Vert_\Fg$ on $\Fg$), taking values in $\BR_{\geq 0}$ so that the log-norm $\sigma_G$ on $G$ (resp. $\sigma_\Fg$ on $\Fg$) is given by $\sigma_G(g)=\sup(1,\log(\Vert g\Vert_G))$ (resp. $\sigma_\Fg(X)=\sup(1,\log(\Vert X\Vert_\Fg))$).

We fix a minimal Levi subgroup (resp. parabolic subgroup) $M_0$ (resp. $P_0=M_0N_0$) of $G$ and let $A$ be a maximal split torus of $M_0$. Let $\Sigma(A,P_0)$ be the set of roots of $A$ in $P_0$ and let $A^+=\{a\in A^\circ|\;\alpha(a)\geq 1,\;\forall \alpha\in \Sigma(A,P_0)\}$. We have the Cartan decomposition $G=KA^+K$.

We say a parabolic subgroup of $G$ is standard if it contains $P_0$. We say a Levi subgroup of $G$ is standard if it is a Levi subgroup of a standard parabolic subgroup and it contains $M_0$.

For two complex valued functions $f$ and $g$ on a set $X$ with $g$ taking values in $\BR_{\geq 0}$, we write that
$$
f(x)\ll g(x)
$$
and say that $f$ is essentially bounded by $g$, if there exists a constant $c>0$ such that for all $x\in X$, we have
$$
| f(x)| \leq cg(x).
$$
We say $f$ and $g$ are equivalent, which is denoted by
$$f(x)\sim g(x)$$
if $f$ is essentially bounded by $g$ and $g$ is essentially bounded by $f$.

Let $C^{\infty}(G)$ be the space of smooth functions on $G$ and let $C_{c}^{\infty}(G)$ (resp. $\CS(G)$) be the space of smooth compactly supported functions (resp. Schwartz functions) on $G$. We use $\CC(G)$ to denote the Harish-Chandra-Schwartz space of $G$ (see Section 1.5 of \cite{B15} for details). On the Lie algebra level, let $C_{c}^{\infty}(\Fg)$ (resp. $\CS(\Fg)$) be the space of smooth compactly supported functions (resp. Schwartz functions) on $\Fg$. We also use $\Xi^G$ to denote the Harish-Chandra $\Xi$-function of $G$.

Let $C_{c,scusp}^{\infty}(G)\subset C_{c}^{\infty}(G)$ be the subspace of strongly cuspidal functions in $C_{c}^{\infty}(G)$. Similarly we can define the spaces $\CS_{scusp}(G), \CC_{scusp}(G),$ $C_{c,scusp}^{\infty}(\Fg),$ and $\CS_{scusp}(\Fg)$. We refer the reader to Section 5 of \cite{B15} for the definition and basic properties of strongly cuspidal functions.

\subsection{Measure}\label{sec measure}
If $G$ is a connected reductive group, we may fix a non-degenerate symmetric bilinear form $<\cdot,\cdot>$ on $\Fg$ that is invariant under $G$-conjugation (i.e. Killing form). For $f\in \CS(\Fg)$, we can define its Fourier transform $f\rightarrow \hat{f}$ to be
\begin{equation}\label{FT}
\hat{f}(X)=\int_{\Fg} f(Y) \psi(<X,Y>) dY
\end{equation}
where $dY$ is the selfdual Haar measure on $\Fg$ such that $\hat{\hat{f}}(X)=f(-X)$. Then we get a Haar measure on $G$ such that the Jacobian of the exponential map is equal to 1 at $0\in \Fg$. If $H$ is a subgroup of $G$ such that the restriction of the bilinear form to $\Fh$ is also non-degenerate, then we can define the measures on $\Fh$ and $H$ by the same method.

Let $G$ be a split real reductive group with trivial center, $K=G^\theta$ be a maximal compact subgroup of $G$, $\Fk$ be the Lie algebra of $K^\circ$ and $\Fp=\Fk^\perp$ be the orthogonal complement of $\Fk$ in $\Fg$ (i.e. $\Fg=\Fp\oplus \Fk$ is the Cartan decomposition). Let $\Ft\subset \Fp$ be a maximal abelian subspace. Since $G$ is split, we know that $\Ft$ is the Lie algebra of a maximal split torus $T$ of $G$. Let $B=TN$ be a Borel subgroup of $g$ and $\bar{B}=T\bar{N}$ be its opposite. We have the Iwasawa decomposition $G=BK=\bar{B}K$ and $T\cap K$ is a finite group. The next lemma will be used in the proof of the geometric side of the trace formula.

\begin{lem}\label{Cartan decomposition}
For $f\in \CS(\Fp)$, we have ($W(T)$ is the Weyl group)
$$\int_{\Fp} f(X)dX=\frac{1}{|W(T)|\cdot |T\cap K|}\int_{\Ft}\int_K D^G(Y)^{1/2}f(k^{-1}Yk)dkdY.$$
\end{lem}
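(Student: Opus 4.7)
The plan is to view this identity as the Weyl integration formula on $\Fp$ for the split symmetric pair $(\Fg, \Fk)$. Since $G$ is split, $\Ft$ is not merely a maximal abelian subspace of $\Fp$ but in fact a Cartan subalgebra of $\Fg$, and every restricted root is an actual root. This observation is what makes the classical computation go through in a particularly clean form. I would introduce the map
\[
\phi \colon K \times \Ft \longrightarrow \Fp, \qquad \phi(k, Y) = \Ad(k^{-1}) Y,
\]
which is well-defined because $\Ad(K)$ preserves $\Fp$, and then prove the identity by the change-of-variables formula after computing (a) the generic fiber cardinality and (b) the Jacobian.

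First I would verify that $\phi$ surjects onto $\Fp_{reg} := \Fp \cap \Fg_{reg}$: every element of $\Fp$ is $K$-conjugate to an element of $\Ft$ by the infinitesimal Cartan decomposition for symmetric spaces. Second, for a regular $Y \in \Ft$, the fact that $G$ is split gives $Z_G(Y) = T$, hence $Z_K(Y) = T \cap K$, and two regular elements of $\Ft$ are $K$-conjugate iff they lie in the same orbit of $W(T) = N_K(\Ft)/Z_K(\Ft)$. Therefore $\phi$ is generically $|W(T)| \cdot |T \cap K|$-to-one. Third, I would compute the differential at $(e, Y)$ with $Y \in \Ft_{reg}$:
\[
d\phi_{(e, Y)}(X, Z) = [Y, X] + Z, \qquad X \in \Fk,\ Z \in \Ft.
\]
Using the $\theta$-stable root decomposition $\Fg = \Ft \oplus \bigoplus_{\alpha} \Fg_\alpha$ together with
\[
\Fk = \bigoplus_{\alpha > 0} \BR(E_\alpha - E_{-\alpha}), \qquad \Fp = \Ft \oplus \bigoplus_{\alpha > 0} \BR(E_\alpha + E_{-\alpha}),
\]
one checks $[Y, E_\alpha - E_{-\alpha}] = \alpha(Y)(E_\alpha + E_{-\alpha})$. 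Hence, relative to bases orthonormal with respect to $\langle \cdot, \cdot \rangle$, the differential is block diagonal with identity on $\Ft$ and diagonal entries $\alpha(Y)$ on the positive root contributions, so its Jacobian is $\prod_{\alpha > 0} |\alpha(Y)|$. Since every root is real-valued on $\Ft$, this product equals $\prod_{\alpha} |\alpha(Y)|^{1/2} = D^G(Y)^{1/2}$. Putting these three ingredients together with the $K$-invariance of the measure on $\Fp$ (which removes any dependence on $k$) gives the stated formula.

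The genuine technical point, and where I expect the main friction to lie, is Step three: one must check that the measures on $\Ft$, $\Fk$, and $\Fp$ determined by the self-dual convention of Section \ref{sec measure} combine in such a way that the Jacobian in the orthogonal bases above is exactly $\prod_{\alpha>0}|\alpha(Y)|$ with no stray constants. This reduces to a careful bookkeeping of $\langle E_\alpha, E_{-\alpha}\rangle$ under the invariant form, the orthogonal decomposition $\Fg = \Fk \oplus \Fp = \Ft \oplus [\Ft, \Fk] \oplus \Fk$ (with the middle summand identified with $[Y, \Fk]$ via $\ad(Y)$), and the fact that $\langle \cdot, \cdot \rangle$ is non-degenerate on each of $\Ft$, $\Fk$, $\Fp$ when $G$ is split. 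Once this normalization is handled, the identity is immediate from change of variables via $\phi$ restricted to $\Fp_{reg}$ (whose complement has measure zero because $\Fp_{reg}$ is the complement of a proper algebraic subset of $\Fp$).
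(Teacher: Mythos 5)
Your proof is correct and follows essentially the same route as the paper's: reduce to the regular set, show every regular element of $\Fp$ is $K$-conjugate to $\Ft$ with generic fiber of size $|W(T)|\cdot|T\cap K|$, and compute the Jacobian via the root-space decomposition $\Fp=\Ft\oplus\bigoplus_{\alpha>0}\BR(E_\alpha+E_{-\alpha})$, $[Y,E_\alpha-E_{-\alpha}]=\alpha(Y)(E_\alpha+E_{-\alpha})$, yielding $\prod_{\alpha>0}|\alpha(Y)|=D^G(Y)^{1/2}$. You spell out the change-of-variables map $\phi$ and the measure normalization more explicitly than the paper does, but the three ingredients are identical.
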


\begin{proof}
The lemma is a consequence of the following three facts.
\begin{itemize}
\item Every element in $\Fp\cap \Fg_{reg}$ is $K$-conjugated to an element of $\Ft$.
\item Two elements in $\Fp\cap \Fg_{reg}$ are $G$-conjugated to each other if and only if they are $K$-conjugated to each other.
\item The Jacobian of the map $\Fp//K\rightarrow \Ft//W(T)$ is equal to $D^G(\cdot)^{1/2}$.
\end{itemize}
The first fact follows from the fact that two different choices of maximal abelian subspace of $\Fp$ are $K$-conjugated to each other. The second fact follows from the first fact and the fact that the normalizer of $T$ in $G$ is contained in $TK$. 

For the third one, let $\Sigma$ (resp. $\bar{\Sigma}$) be the roots of $T$ in $\Fn=Lie(N)$ (resp. $\bar{\Fn}=Lie(\bar{N})$). Then $\Sigma\cup \bar{\Sigma}$ is the root system of $G$ with $\Sigma$ (resp. $\bar{\Sigma}$) being the set of positive (resp. negative) roots. For $\alpha\in \Sigma\cup \bar{\Sigma}$, let $V_\alpha$ be the root space of $\alpha$. Since $G$ is split, we know that $V_\alpha$ is one dimensional and the Cartan involution $\theta$ will map $V_\alpha$ onto $V_{-\alpha}$ for all $\alpha\in \Sigma\cup \bar{\Sigma}$. Hence we can fix $0\neq X_\alpha\in V_\alpha$ such that $\theta(X_\alpha)=X_{-\alpha}$ for all $\alpha\in \Sigma\cup \bar{\Sigma}$. Then $\{X_\alpha+X_{-\alpha}|\;\alpha\in \Sigma\}$ is a basis of $\Fk$ and $\Fp=\Ft\oplus Span\{X_\alpha-X_{-\alpha}|\;\alpha\in \Sigma\}$. This implies that the Jacobian is equal to $D^G(\cdot)^{1/2}$. This finishes the proof of the lemma.
\end{proof}

\subsection{Representations}\label{sec representation}
We say a representation $\pi$ of $G$ is irreducible  (resp. finite length) if it is an irreducible (resp. finite length) Casselman-Wallach representation of $G$. We say a finite length representation $\pi$ of $G$ is an induced representation if there exists a proper parabolic subgroup $P=MN$ of $G$ and a finite length  representation $\tau$ of $M$ such that $\pi=I_{P}^{G}(\tau)$. Here $I_{P}^{G}(\cdot)$ is the normalized parabolic induction.

We use $\CR(G)$ to denote the Grothendieck group of finite length  representations of $G$, and we use $\CR(G)_{ind}\subset \CR(G)$ (resp. $\CR(G)_{temp}\subset \CR(G)$) to denote the subspace of $\CR(G)$ generated by induced representations (resp. tempered representations).

\begin{prop}\label{Grothendiect group}
We have $\CR(G)=\CR(G)_{ind}+\CR(G)_{temp}$ (i.e. $\CR(G)$ is generated by induced representations and tempered representations). Moreover, if $G_{ell,reg}=\emptyset$, then $\CR(G)=\CR(G)_{ind}$.
\end{prop}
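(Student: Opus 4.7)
The plan is to handle the two statements separately, using the Langlands classification for the first and the classification of tempered representations combined with an elliptic character argument for the second.

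For the first statement, I would begin with an arbitrary irreducible representation $\pi$ of $G$ and apply the Langlands classification to realize $\pi$ as the unique irreducible quotient of a standard module $I_P^G(\sigma \otimes e^\lambda)$, where $P = MN$ is a parabolic subgroup of $G$, $\sigma$ is an irreducible tempered representation of $M$, and $\lambda \in (\Fa_M^G)^*$ lies in the open positive chamber determined by $P$. In $\CR(G)$ one can then write
$$[\pi] = [I_P^G(\sigma \otimes e^\lambda)] - \sum_i m_i [\pi_i],$$
where the $\pi_i$ are the remaining irreducible constituents of the standard module. If $P = G$, then $\lambda = 0$ and $\pi = \sigma$ is already tempered. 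Otherwise $P$ is proper, so $[I_P^G(\sigma \otimes e^\lambda)] \in \CR(G)_{ind}$, and the Langlands classification guarantees that each $\pi_i$ has Langlands parameter strictly smaller than $\lambda$ in the standard partial order on $(\Fa_M^G)^*$ (namely, $\lambda$ minus the parameter of $\pi_i$ is a nonnegative combination of positive roots). An induction on this parameter, with tempered representations as the base case, then places $[\pi] \in \CR(G)_{ind} + \CR(G)_{temp}$.

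For the second statement, I would assume $G_{ell,reg} = \emptyset$, equivalently that $G$ has no discrete series, so that by the first statement it suffices to prove $\CR(G)_{temp} \subseteq \CR(G)_{ind}$. For any irreducible tempered representation $\pi$, the Knapp--Zuckerman classification expresses $\pi$ as a direct summand of some $I_Q^G(\delta)$ with $Q = LU$ a parabolic and $\delta$ a discrete series of $L$. The hypothesis forces $L$ to be a proper Levi, so $Q$ is proper and $[I_Q^G(\delta)] \in \CR(G)_{ind}$. When the $R$-group $R(L,\delta)$ is trivial, $I_Q^G(\delta)$ is irreducible and $[\pi] \in \CR(G)_{ind}$ immediately. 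In general I would isolate $[\pi]$ from the other irreducible summands by invoking the real-group analog of Kazhdan's density theorem: the Harish-Chandra character restricted to $G_{ell,reg}$ induces an injective map from $\CR(G)_{temp}/(\CR(G)_{temp} \cap \CR(G)_{ind})$ into the space of invariant distributions on $G_{ell,reg}$; since $G_{ell,reg} = \emptyset$ the target vanishes, forcing the desired inclusion.

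The main obstacle will be this injectivity assertion. Its justification rests on two ingredients from Harish-Chandra's theory: first, that the character of any representation parabolically induced from a proper parabolic vanishes on regular elliptic elements, since such elements are not $G$-conjugate into any proper Levi; second, that the Plancherel expansion for tempered characters allows one to reconstruct a tempered virtual character from its restrictions to the regular elliptic sets of all Levis. In the present setting, the absence of discrete series for $G$ removes the only potential source of non-induced elliptic contributions, and the character argument collapses to the required inclusion.
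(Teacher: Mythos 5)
Your proof is correct and follows essentially the same route as the paper: the Langlands classification handles the first assertion, and the second reduces to the fact that the absence of regular elliptic elements forces every tempered virtual character into the span of properly induced ones. The paper compresses your Knapp--Zuckerman plus character-restriction argument into the single remark that $G_{ell,reg}=\emptyset$ implies $G$ has no elliptic representations, which is exactly the content of the injectivity statement you identify as the key input.
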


\begin{proof}
The first part is a direction consequence of the Langlands classification. For the second part, since $G_{ell,reg}=\emptyset$, $G$ does not have any elliptic representations. This implies that all the tempered representations of $G$ are generated by induced representations. This proves the proposition.
\end{proof}

\subsection{Quasi characters}\label{section quasi-character}
Let $Nil(\Fg)$ be the set of nilpotent orbits  of $\Fg$ and $Nil_{reg}(\Fg)$ be the set of regular nilpotent orbits of $\Fg$. In particular, the set $Nil_{reg}(\Fg)$ is empty unless $G$ is quasi-split. For every $\CO\in Nil(\Fg)$ and $f\in \CS(\Fg)$, we use $J_{\CO}(f)$ to denote the nilpotent orbital integral of $f$ associated to $\CO$. Harish-Chandra proved that there exists a unique smooth function $Y\rightarrow \hat{j}(\CO,Y)$ on $\Fg_{reg}$,  which is invariant under $G$-conjugation and locally integrable on $\Fg$, such that for every $f\in \CS(\Fg)$, we have
$$J_{\CO}(\hat{f})=\int_{\Fg} f(Y) \hat{j}(\CO,Y) dY.$$

We refer the reader to Section 4.2-4.4 of \cite{B15} for the definition of quasi-characters. Let $\theta$ (resp. $\theta'$) be a quasi character on $G$ (resp. $\Fg$), we have the germ expansions 
\begin{eqnarray*}
&&D^G(x\exp(X))^{1/2}\theta(x\exp(X))\\
&=&D^G(x\exp(X))^{1/2}\sum_{\CO\in Nil_{reg}(\Fg_x)} c_{\theta,\CO}(x) \hat{j}(\CO,X)+O(|X|),\\
&&D^G(X+Y)^{1/2}\theta'(X+Y)\\
&=&D^G(X+Y)^{1/2}\sum_{\CO\in Nil_{reg}(\Fg_X)} c_{\theta',\CO}(X) \hat{j}(\CO,Y)+O(|Y|)
\end{eqnarray*}
for every $x\in G_{ss}$ (resp. $X\in \Fg_{ss}$) and $X\in \Fg_x$ (resp. $Y\in \Fg_X$) close to 0. Here $c_{\theta,\CO}(x)\in \BC$ (resp. $c_{\theta',\CO}(X)\in \BC$) are called the regular germs of $\theta$ (resp. $\theta'$) at $x$ (resp. $X$).

The most important examples of quasi-characters on $G$ are the Harish-Chandra characters of finite length smooth representations of $G$. Examples of quasi-characters on $\Fg$ are the functions $\hat{j}(\CO,\cdot)$ ($\CO\in Nil(\Fg)$) defined above.

We use $QC(G)$ (resp. $QC(\Fg)$) to denote the set of quasi-characters on $G$ (resp. $\Fg$), and we use $QC_c(G)$ (resp. $QC_c(\Fg)$) to denote the set of quasi-characters on $G$ (resp. $\Fg$) whose support is compact modulo conjugation. We also use $SQC(\Fg)$ to denote the space of Schwartz quasi-character on $\Fg$ defined in Section 4.2 of \cite{B15}. We have $QC_c(\Fg)\subset SQC(\Fg)$. If $\Omega$ (resp. $\omega$) is an open completely G-invariant subset of $G$ (resp. $\Fg$), we define $QC(\Omega)$ (resp. $QC(\omega)$) to be the set of quasi-characters on $\Omega$ (resp. $\omega$). Similarly we can also define the spaces $QC_c(\Omega),\;QC_c(\omega)$. We refer the reader to Section 4.2-4.4 of \cite{B15} for the topology on these spaces.

For $f\in \CC_{scusp}(G)$ (resp. $f\in \CS_{scusp}(\Fg)$), let $\theta_f$ be the quasi-character on $G$ (resp. $\Fg$)
defined via the weighted orbital integrals of $f$. For $f\in \CS_{scusp}(\Fg)$, let $\hat{\theta}_f=\theta_{\hat{f}}$ be the Fourier transform of $\theta_f$. We refer the reader to Section 5.2 and 5.6 of \cite{B15} for details.

\begin{defn}
Let $\theta$ be a quasi-character on $G$. For $x\in G_{ss}$, define the average of the regular germs to be
$$c_{\theta}(x)=\begin{array}{cc}\left\{ \begin{array}{ccl} \frac{1}{|Nil_{reg}(\Fg_x)|} \sum_{\CO\in Nil_{reg}(\Fg_x)} c_{\theta,\CO}(x) & \text{if} & Nil_{reg}(\Fg_x)\neq \emptyset; \\ 0 & \text{if} & Nil_{reg}(\Fg_x)=\emptyset. \\ \end{array}\right. \end{array}$$
\end{defn}

\begin{rmk}
\begin{enumerate}
\item \emph{The set $ Nil_{reg}(\Fg_x)$ is non-empty if and only if $G_x$ is quasi-split.}
\item \emph{For $x\in G_{reg}$, $c_{\theta}(x)$ is just $\theta(x)$.}
\end{enumerate}
\end{rmk}

\begin{defn}
For $\lambda\in \BR^{\times}$, $l\in \BR$ and $\theta\in QC(\Fg)$, define $$M_{\lambda,l}(\theta)(X)=|\lambda|^{-l} \theta(\lambda^{-1}X),\;X\in \Fg.$$
We also use $\theta_\lambda$ to denote $M_{\lambda,0}(\theta)$, i.e. $\theta_\lambda(X)=\theta(\lambda^{-1} X)$.
\end{defn}

\begin{lem}\label{lemma quasi-character homogeneous}
For $\lambda\neq \pm 1$, $l\in \BZ_{>0}$ and $\theta\in QC_c(\Fg)$, the following hold.
\begin{enumerate}
\item If $c_{\theta,\CO}=0$ for all $\CO\in Nil_{reg}(\Fg)$, then there exists $\theta_1,\theta_2\in QC_c(\Fg)$ such that $\theta=(M_{\lambda,\delta(G)/2}-1)^d\theta_1+\theta_2$ and $0\notin Supp(\theta_2)$. Here $\delta(G)=\dim(G)-rank(G)$.
\item For $l>\delta(G)/2$,  there exists $\theta_1,\theta_2\in QC_c(\Fg)$ such that $\theta=(M_{\lambda,l}-1)^d\theta_1+\theta_2$ and $0\notin Supp(\theta_2)$.
\end{enumerate}
\end{lem}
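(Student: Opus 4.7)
The key input is the scaling behavior of the regular Fourier transforms $\hat j(\CO,\cdot)$ of nilpotent orbital integrals. Using the standard homogeneity $\hat j(\CO,tX)=|t|^{-\dim\CO/2}\hat j(\CO,X)$ for $t>0$, a direct computation gives
$$
M_{\lambda,l}\,\hat j(\CO,\cdot)=|\lambda|^{\dim\CO/2-l}\,\hat j(\CO,\cdot).
$$
In case (1), with $l=\delta(G)/2$, this scalar equals $1$ precisely when $\dim\CO=\delta(G)$, i.e.\ exactly on $\CO\in Nil_{reg}(\Fg)$; the hypothesis $c_{\theta,\CO}=0$ on regular orbits therefore removes exactly the kernel of $M_{\lambda,l}-1$ on the regular germ of $\theta$ at $0$. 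In case (2), since $l>\delta(G)/2\geq\dim\CO/2$ for every $\CO\in Nil(\Fg)$, the scalar is nonzero and different from $1$ for all orbits. Thus in both cases $M_{\lambda,l}-1$ is ``invertible'' on the germs that actually contribute to $\theta$ near $0$.

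My plan would be to first reduce to the case where $\theta$ is supported in an arbitrarily small $G$-invariant neighborhood of $0$: using a $G$-invariant cutoff $\chi$ with $\chi\equiv 1$ near $0$, the piece $(1-\chi)\theta$ is already in $QC_c(\Fg\setminus\{0\})$ and can be absorbed into $\theta_2$. It then suffices to establish the single-step case $d=1$ and iterate. For that, I would invoke the Harish-Chandra germ expansion of quasi-characters at $0$ (Section~4.4 of \cite{B15}) to write
$$
\theta=\sum_{\CO\in Nil(\Fg)}c_{\theta,\CO}(0)\,\hat j(\CO,\cdot)\chi_0+\theta^{(1)},
$$
where $\chi_0$ is a further cutoff ensuring each summand lies in $QC_c(\Fg)$ and $\theta^{(1)}\in QC_c(\Fg)$ has strictly smaller germ at $0$. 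For each $\CO$ that actually appears (non-regular in case (1), arbitrary in case (2)), divide by the nonzero scalar $|\lambda|^{\dim\CO/2-l}-1$ to build an explicit preimage $\alpha\in QC_c(\Fg)$ satisfying $(M_{\lambda,l}-1)\alpha\equiv\theta\pmod{\theta^{(1)}+QC_c(\Fg\setminus\{0\})}$. Recursing on $\theta^{(1)}$ and iterating $d$ times then produces $\theta_1$ and the residual $\theta_2$.

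The main obstacle is upgrading ``germ killed to arbitrarily high order after iteration'' into the stronger conclusion $0\notin Supp(\theta_2)$, since a quasi-character may have trivial germ at $0$ without vanishing on a neighborhood of $0$. My intended fix is a convergence argument: after possibly replacing $\lambda$ by $\lambda^{-1}$, choose $|\lambda|>1$ so that every relevant eigenvalue $|\lambda|^{\dim\CO/2-l}$ has modulus strictly less than $1$. Then the formal geometric series $-\sum_{n\geq 0}M_{\lambda,l}^n$, applied to the cutoff model germs, converges on each eigen-component; after truncating at sufficient depth and performing a final $G$-invariant cutoff, one obtains a genuine element $\theta_1\in QC_c(\Fg)$ with residual $\theta_2\in QC_c(\Fg)$ supported away from $0$. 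The delicate bookkeeping here, using the topology on $QC_c(\Fg)$ and the estimates from Section~4 of \cite{B15}, is where the real technical work lies; taking the exponent $d$ large is exactly what allows the geometric series to be truncated cleanly on each graded piece of the germ filtration.
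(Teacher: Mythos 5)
Your eigenvalue computation $M_{\lambda,l}\hat j(\CO,\cdot)=|\lambda|^{\dim\CO/2-l}\hat j(\CO,\cdot)$ and the identification of which orbits obstruct inverting $M_{\lambda,l}-1$ are correct, but the decomposition you build the argument on does not exist in the archimedean setting. A quasi-character on $\Fg$ over $\BR$ does \emph{not} admit an exact local expansion $\theta=\sum_{\CO\in Nil(\Fg)}c_{\theta,\CO}(0)\,\hat j(\CO,\cdot)\chi_0+\theta^{(1)}$ over all nilpotent orbits: the germ expansion recalled in Section \ref{section quasi-character} involves only the \emph{regular} orbits, holds only asymptotically with an $O(|X|)$ error after multiplying by $D^G(\cdot)^{1/2}$, and coefficients $c_{\theta,\CO}(0)$ for non-regular $\CO$ are not part of the structure of a quasi-character here. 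So your remainder $\theta^{(1)}$ is not an object "with strictly smaller germ" that one can recurse on, and, as you yourself observe, even killing the germ to all orders would not yield $0\notin Supp(\theta_2)$. The eigencomponent-by-eigencomponent inversion is therefore not a viable route.

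The convergent-series idea you relegate to a final "fix" is in fact the entire proof, applied to $\theta$ itself rather than to cutoff model germs. This is Proposition 4.6.1(i) of \cite{B15}, which the paper simply cites for part (1) (the same mechanism is written out in Lemma \ref{homogeneous projective}): assuming $|\lambda|>1$, one shows that $\sum_{i\geq 0}\binom{i+d-1}{d-1}(M_{\lambda,l})^i\theta$ converges in $QC_c$ of a neighborhood of $0$ by proving $q_{L,u}((M_{\lambda,l})^i\theta)\ll|\lambda|^{-\epsilon i}$ for the defining seminorms attached to invariant differential operators $u$. For $\deg(u)>0$ this is pure degree counting; the hypotheses enter only at $u=1$: in part (1) the vanishing of the regular germs gives $D^G(X)^{1/2}\theta(X)=O(|X|)$ near $0$, while in part (2) one uses $l>\delta(G)/2$ together with Varadarajan's theorem that $(D^G)^{1/2}\hat j(\CO,\cdot)$ is locally bounded, so $D^G(\cdot)^{1/2}\theta$ is locally bounded and the rescaling still produces geometric decay. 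Note also that the series converges for \emph{every} fixed $d$ (polynomial growth of the binomial coefficients against geometric decay), so $d$ plays no role in "truncating the series cleanly"; it is dictated by the application in Section 3. Your preliminary reduction to a small invariant neighborhood via a cutoff is fine and is where $\theta_2$ comes from.
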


\begin{proof}
The first part is just Proposition 4.6.1(i) of \cite{B15}. The second part follows from the same proof as in loc. cit. together with the fact that $(D^G)^{1/2}\hat{j}(\CO,\cdot)$ is locally bounded for all $\CO\in Nil(\Fg)$ (Theorem 17 of \cite{V}).
\end{proof}

\begin{rmk}
If we assume that $0<\lambda<1$, then the above lemma is also true when we replace $QC_c(\Fg)$ by $QC_c(\omega)$ for any complete $G$-invariant convex neighborhood $\omega$ of $0$ in $\Fg$. Here we say $\omega$ is convex if for any $X\in \omega$ and $0\leq \lambda\leq 1$, we have $\lambda X\in \omega$.
\end{rmk}

Let $P=MN$ be a parabolic subgroup of $G$, $\theta_M$ be a quasi-character of $M$ and $\theta=I_{P}^{G}(\theta_M)$ (we refer the reader to Section 3.4 and 4.7 of \cite{B15} for the definition of parabolic induction of quasi-characters). For all $x\in G_{ss}$, let $\CX_M(x)$ be a set of representatives for the $M$-conjugacy classes of elements in $M$ that are $G$-conjugated to $x$. The following proposition was proved in Proposition 4.7.1 of \cite{B15} and it tells us the behavior of $c_{\theta}(x)$ under parabolic induction.

\begin{prop}\label{germ parabolic induction}
For all $x\in G_{ss}$, $D^G(x)^{1/2}c_{\theta}(x)$ is equal to
$$|Z_G(x):G_x| \sum_{y\in \CX_M(x)} |Z_M(y):M_y|^{-1} D^M(y)^{1/2} c_{\theta_M}(y).$$
In particular, $c_{\theta}(x)=0$ if the set $\CX_M(x)$ is empty.
\end{prop}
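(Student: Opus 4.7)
I would extract the identity from the classical character formula for an induced quasi-character on the regular set, then sharpen it into an equality of regular germs at $x$ by a germ expansion. For $x'\in G_{reg}$ one has
$$D^G(x')^{1/2}\theta(x') = \sum_{y'\in \CX_M(x')} D^M(y')^{1/2}\theta_M(y'),$$
and the plan is to apply this with $x' = x\exp(X)$ for our fixed $x\in G_{ss}$ and $X\in \Fg_x \cap \Fg_{reg}$ tending to zero. The first task is to reparametrize $\CX_M(x\exp X)$ in a small neighborhood of $x$: every $M$-conjugacy class of elements of $M$ near $x$ that is $G$-conjugate to $x\exp X$ can be represented by some $y\exp(Y)$ with $y\in \CX_M(x)$ and $Y\in \Fm_y$ small. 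A direct counting argument, exploiting that $M_y$ is a Levi subgroup of a parabolic subgroup of $G_x$, yields the multiplicity $|Z_G(x):G_x|\cdot |Z_M(y):M_y|^{-1}$: the first index compares $G$-conjugacy with $G_x$-conjugacy of elements of $G_x$, while the second compares $G_x$-conjugacy with $M_y$-conjugacy of $Y$ inside $\Fm_y$.

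Next, I would insert the regular germ expansions on both sides. On the left one has
$$D^G(x\exp X)^{1/2}\theta(x\exp X) = D^G(x\exp X)^{1/2}\sum_{\CO\in Nil_{reg}(\Fg_x)} c_{\theta,\CO}(x)\,\hat{j}^{G_x}(\CO,X) + O(|X|),$$
and on the right the analogous expansion applies to each $\theta_M(y\exp Y)$ in terms of $\hat j^{M_y}(\CO',Y)$ for $\CO'\in Nil_{reg}(\Fm_y)$. The crucial input is the compatibility of Fourier transforms of regular nilpotent orbital integrals under parabolic descent: since $\Fm_y$ is a Levi subalgebra of a parabolic subalgebra of $\Fg_x$, Lusztig--Spaltenstein induction surjects $Nil_{reg}(\Fm_y) \twoheadrightarrow Nil_{reg}(\Fg_x)$ and the $\hat j$'s on the two sides match, up to the appropriate Jacobians. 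Comparing asymptotics as $X\to 0$ and using the linear independence of the family $\{\hat{j}^{G_x}(\CO,\cdot)\}_{\CO\in Nil_{reg}(\Fg_x)}$ on $\Fg_{x,reg}$ gives the per-orbit identity for each $c_{\theta,\CO}(x)$; averaging over $\CO\in Nil_{reg}(\Fg_x)$ then yields exactly the claimed formula for the averaged germ $c_\theta(x)$. When $\CX_M(x)$ is empty, the character formula shows $\theta$ vanishes identically in a neighborhood of $x$, hence so do all its regular germs.

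The most delicate step is the reparametrization of $\CX_M(x\exp X)$ and the subsequent matching of the $\hat j$-coefficients through Lusztig--Spaltenstein induction. Conceptually this is descent to the centralizer: the global induction $I_P^G\theta_M$, viewed near $x$, becomes the sum over $y\in \CX_M(x)$ of parabolic inductions from the Levi subgroups $M_y \subset G_x$, so that once descent is set up cleanly the proposition reduces to the classical parabolic induction formula for Shalika germs of quasi-characters on $\Fg_x$.
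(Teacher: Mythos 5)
The paper does not actually prove this proposition: it is quoted verbatim from Proposition 4.7.1 of \cite{B15}, and the text around the statement says exactly that. So there is no internal proof to compare against; the relevant comparison is with Beuzart-Plessis's argument, and your sketch follows essentially the same route --- van Dijk's induced character formula on $G_{reg}$, reparametrization of $\CX_M(x\exp X)$ near $x$ with the index factors $|Z_G(x):G_x|$ and $|Z_M(y):M_y|^{-1}$ accounting for the passage from $G$- to $G_x$- to $M_y$-conjugacy, then the germ expansion and the behaviour of the functions $\hat{j}(\CO,\cdot)$ under parabolic induction.

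One step in your sketch is glibber than it should be. Linear independence of $\{\hat{j}^{G_x}(\CO,\cdot)\}_{\CO\in Nil_{reg}(\Fg_x)}$ gives, for each fixed $\CO$, an identity of the form $c_{\theta,\CO}(x)=\sum_{y}\sum_{\CO'}a_{\CO,\CO'}\,c_{\theta_M,\CO'}(y)$ (up to the index and Weyl-discriminant factors), where $a_{\CO,\CO'}$ are the coefficients expressing the parabolically induced $\hat{j}^{M_y}(\CO',\cdot)$ in the basis $\hat{j}^{G_x}(\CO,\cdot)$. Over $\BR$ a quasi-split group has several regular nilpotent orbits, induction of a single regular orbit of $\Fm_y$ can hit several regular orbits of $\Fg_x$, and the individual $a_{\CO,\CO'}$ are not all equal to $1$. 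For the \emph{average} $c_\theta(x)$ to transform with weight exactly $|Nil_{reg}(\Fm_y)|^{-1}$ on each $c_{\theta_M,\CO'}(y)$, you need the column sums $\sum_{\CO}a_{\CO,\CO'}$ to be independent of $\CO'$ and equal to $|Nil_{reg}(\Fg_x)|/|Nil_{reg}(\Fm_y)|$; equivalently, that $\sum_{\CO'\in Nil_{reg}(\Fm_y)}\hat{j}^{M_y}(\CO',\cdot)$ induces to a multiple of $\sum_{\CO\in Nil_{reg}(\Fg_x)}\hat{j}^{G_x}(\CO,\cdot)$. This is true and is precisely the input used in \cite{B15} (it is why the proposition is stated for the averaged germ $c_\theta$ and not for the individual $c_{\theta,\CO}$), but it is a fact requiring proof, not a consequence of ``averaging'' a family of per-orbit identities that you do not actually have in closed form. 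With that point made explicit, your plan is the standard proof.
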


The next lemma is well known (e.g. Lemma 3.2 of \cite{ABV}) and it will be used in our proof of the geometric side of the trace formula.

\begin{lem}\label{lemma nilpotent orbit}
Let $G$ be a quasi-split real reductive group and $\CO_1,\CO_2$ be two regular nilpotent orbits of $\Fg$. Then there exists $g\in Res_{\BC/\BR}G$ such that the $g$-conjugation map preserves $G$ and it sends $\CO_1$ to $\CO_2$.
\end{lem}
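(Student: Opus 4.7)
The plan is to exploit the fact that over $\BC$ the regular nilpotent orbit is unique (Kostant), so $\CO_1$ and $\CO_2$ are automatically contained in a single $G(\BC)$-orbit in $\Fg(\BC)$, and then repair the transporting element $g\in G(\BC)$ so that it additionally normalizes the real form $G$. The repair is a Galois cohomology argument, reducing the obstruction to $H^1(\BR,-)$ of a connected unipotent group, which vanishes in characteristic $0$.

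Concretely, pick $X_i\in\CO_i\subset\Fg(\BR)$ and choose $g\in G(\BC)$ with $\Ad(g)X_1=X_2$; such a $g$ exists since both $X_i$ are regular nilpotent in $\Fg(\BC)$. Let $\sigma$ denote complex conjugation on $G(\BC)$ relative to the $\BR$-structure. Applying $\sigma$ to $\Ad(g)X_1=X_2$ and using $\sigma(X_i)=X_i$, one gets $\Ad(\sigma(g))X_1=\Ad(g)X_1$, whence $c:=g^{-1}\sigma(g)\in Z_G(X_1)(\BC)$, and $\sigma^2=1$ forces $c\cdot\sigma(c)=1$. Thus $c$ is a Galois $1$-cocycle valued in $Z_G(X_1)$, and the problem becomes to kill the unipotent part of $c$ up to a central element.

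By Kostant's theorem on principal nilpotents, $Z_G(X_1)^\circ$ is abelian, and since $Z(G)^\circ$ is of multiplicative type, it meets the unipotent radical $U\subset Z_G(X_1)^\circ$ trivially. This yields an $\BR$-group decomposition $Z_G(X_1)=Z(G)\times U$ with $U$ connected unipotent (of dimension $\rank(G^{der})$). Write $c=zu$ with $z\in Z(G)(\BC)$, $u\in U(\BC)$; the cocycle condition splits into $z\sigma(z)=1$ and (using that $U$ is abelian) $u\sigma(u)=1$. Since $U$ is a connected unipotent $\BR$-group, $H^1(\Gal(\BC/\BR),U(\BC))=0$, so $u=h\sigma(h)^{-1}$ for some $h\in U(\BC)$.

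Set $g':=gh$. Because $h\in U\subset Z_G(X_1)$ centralizes $X_1$, we have $\Ad(g')X_1=\Ad(g)X_1=X_2\in\CO_2$, and a short computation using centrality of $z$ and $u=h\sigma(h)^{-1}$ gives
$$(g')^{-1}\sigma(g')=h^{-1}(zu)\sigma(h)=z\cdot uh^{-1}\sigma(h)=z\in Z(G)(\BC).$$
Since $G(\BR)$ is Zariski-dense in $G(\BC)$, its centralizer in $G(\BC)$ equals $Z(G)(\BC)$, so $(g')^{-1}\sigma(g')\in Z(G)(\BC)$ is exactly the condition that $g'G(\BR)(g')^{-1}=G(\BR)$, i.e., $g'$ preserves $G$. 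Transporting the $G(\BR)$-orbit $\CO_1$ by $\Ad(g')$ then yields $\CO_2$. The main technical input is the structural decomposition $Z_G(X_1)=Z(G)\times U$ at a regular nilpotent; granting that, the rest is a formal Galois-cohomology argument relying on $H^1(\BR,U)=0$.
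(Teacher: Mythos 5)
Your argument is correct. For this lemma the paper itself offers no proof at all --- it simply cites Lemma 3.2 of \cite{ABV} as a well-known fact --- so there is nothing to compare line by line; what you have written is the standard Galois-descent argument that underlies that reference, and it is complete. The cocycle manipulation is sound: $c=g^{-1}\sigma(g)$ lands in $Z_G(X_1)(\BC)$, the splitting $c=zu$ is $\sigma$-equivariant because both $Z(G)$ and the unipotent radical $U$ of $Z_G(X_1)$ are defined over $\BR$ (the latter since $X_1\in\Fg(\BR)$ and $\BR$ is perfect), $H^1(\Gal(\BC/\BR),U(\BC))=0$ because $U$ is a vector group, and $(g')^{-1}\sigma(g')\in Z(G)(\BC)$ does imply that $\mathrm{Int}(g')$ commutes with $\sigma$ and hence preserves $G(\BR)$ and $\Fg(\BR)$, carrying the $G(\BR)$-orbit of $X_1$ onto that of $X_2$. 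The quasi-split hypothesis is used only to guarantee that $\CO_1,\CO_2$ exist, which is as it should be. The one place where your justification is thinner than your claim is the decomposition $Z_G(X_1)=Z(G)\times U$ of the \emph{full} centralizer: Kostant's theorem plus the observation that $Z(G)$ is of multiplicative type only gives $Z_G(X_1)^{\circ}=Z(G)^{\circ}\times U$ (together with the fact that the regular orbit is distinguished, so no non-central torus centralizes $X_1$). You also need that the component group of $Z_G(X_1)$ modulo $Z(G)$ is trivial for a regular nilpotent, i.e.\ $Z_G(X_1)=Z(G)\cdot Z_G(X_1)^{\circ}$; this is true (Springer--Steinberg; see also Collingwood--McGovern, Ch.~6), and you do flag the decomposition as the main technical input at the end, but the phrase ``this yields'' glosses over exactly this point, so attach a citation there.
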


\begin{rmk}
Since any two maximal compact subgroups of $G$ are conjugated to each other, we may choose $g$ in the above lemma such that $g^{-1}Kg=K$.
\end{rmk}

\section{The trace formula and the multiplicity formula}\label{sec trace formula and multiplicity formula}

\subsection{The distribution $I$}\label{sec distribution I}
Let $\theta_K$ be a smooth function on $K$ that is invariant under conjugation. For $f\in \CC(G)$, define
$$I(f,x,\theta_K)=\int_{K} f(x^{-1}kx)\theta_K(k) dk.$$
Set $||\theta_K||=\max_{k\in K} |\theta_K(k)|$. Let $\Theta_K$ be a set consists of some smooth $K$-invariant functions on $K$ with a fixed upper on both the functions and derivatives, i.e. there exists $C>0$ such that 
$$||\theta_K||<C,\;|\frac{d}{dt} \theta_K(k\exp(tX))|<C\cdot ||X||_{\Fk}$$
for all $\theta_K\in \Theta_K,k\in K$ and $X\in \Fk$. Our goal is to prove the following proposition.

\begin{prop}\label{convergence}
For $d>0$, there exists a norm $\nu_d$ on $\CC(G)$ such that 
$$|I(f,x,\theta_K)|\leq \nu_d(f)(||\theta_K||+\sqrt{||\theta_K||})\Xi^G(x)^2\sigma_{G/Z_G}(x)^{-d}$$
for all $\theta_K\in \Theta_K$, $x\in G$ and $f\in \CC_{scusp}(G)$. Here $\Xi^G$ is the Harish-Chandra $\Xi$-function on $G$.
\end{prop}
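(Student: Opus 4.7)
The proof combines two estimates, one for each of the summands $\|\theta_K\|$ and $\sqrt{\|\theta_K\|}$ in the claimed bound, resting on the same common analytic backbone.

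The main analytic ingredient, which I expect to be the hard part, is a ``$K$-orbital'' integral estimate for strongly cuspidal Schwartz functions: for every $d>0$ there exists a continuous seminorm $\nu'_d$ on $\CC(G)$ such that
\[
\int_K |f(x^{-1}kx)|\,dk \;\leq\; \nu'_d(f)\,\Xi^G(x)^2\,\sigma_{G/Z_G}(x)^{-d}
\]
for every $f\in\CC_{scusp}(G)$ and every $x\in G$. The $\Xi^G(x)^2$ growth is the easy part: it follows from the pointwise bound $|f(g)|\leq \nu(f)\Xi^G(g)$ together with the doubling estimate $\int_K\Xi^G(x^{-1}kx)\,dk\sim \Xi^G(x)^2$. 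The polynomial decay $\sigma_{G/Z_G}(x)^{-d}$, however, is \emph{not} available for a general $f\in\CC(G)$ (e.g.\ when $k$ commutes with $x$ one has $\sigma(x^{-1}kx)=\sigma(k)$ bounded, destroying the decay), and must be extracted from the strong cuspidality hypothesis; it is the $K$-orbital analogue of the weighted orbital integral estimates of Beuzart-Plessis (see Section 5 of \cite{B15}). The proof should follow the familiar template: Cartan-decompose $x=k_1 a k_2$ with $a\in A^+$, reduce to the behaviour on the positive chamber, and apply Harish-Chandra's descent together with the vanishing of the constant terms $f^{(P)}$ for every proper parabolic $P=MN$ to produce arbitrary polynomial decay as $\sigma(a)\to\infty$.

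With this estimate in hand, the $\|\theta_K\|$-summand is immediate from the pointwise bound $|\theta_K(k)|\leq \|\theta_K\|$. For the $\sqrt{\|\theta_K\|}$-summand, apply the Cauchy--Schwarz inequality
\[
|I(f,x,\theta_K)|^2 \;\leq\;\|\theta_K\|_{L^2(K)}^2\cdot\int_K |f(x^{-1}kx)|^2\,dk,
\]
use the analogous $L^2$-version of the key integral estimate for the second factor (derived by repeating the argument of the first step with $|f|^2$ in place of $|f|$ and the pointwise bound $|f(g)|^2\leq \nu(f)^2\Xi^G(g)^2\sigma(g)^{-2d}$), and bound the first factor via $\|\theta_K\|_{L^2(K)}^2\leq \|\theta_K\|\cdot\|\theta_K\|_{L^1(K)}\leq C\|\theta_K\|$, where $C$ is a uniform constant depending only on the common $L^\infty$-bound built into the definition of $\Theta_K$ and on $\vol(K)$. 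Taking square roots yields the $\sqrt{\|\theta_K\|}$-contribution, and summing the two bounds produces the proposition with $\nu_d$ assembled from $\nu'_d$ and the $L^2$-seminorm.

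The principal obstacle is thus the integral estimate in the first paragraph, and specifically the extraction of polynomial decay in $\sigma_{G/Z_G}(x)$: the $\Xi^G(x)^2$ growth is routine, but the $\sigma_{G/Z_G}(x)^{-d}$ decay requires a Harish-Chandra descent argument on the Cartan decomposition that genuinely uses the vanishing of constant terms of $f$. Uniformity in $\theta_K\in\Theta_K$ is automatic from the common $L^\infty$-bound on the family; the $C^1$-bound on derivatives of $\theta_K$ appears to play no role in this outline, suggesting it is required only for subsequent applications (such as smoothness or equicontinuity in parameters) rather than for the statement itself.
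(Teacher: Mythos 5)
Your proposed main ingredient is false, and this is a genuine gap rather than a presentational one. You claim that for strongly cuspidal $f$ one has $\int_K |f(x^{-1}kx)|\,dk \leq \nu'_d(f)\,\Xi^G(x)^2\sigma_{G/Z_G}(x)^{-d}$ for every $d$. Take $G=\SL_2(\BR)$, $a=\diag(t,t^{-1})$ with $t$ large, and $f$ a strongly cuspidal function with $f(1)\neq 0$ (e.g.\ a matrix coefficient of a discrete series). The set of $k\in K$ with $a^{-1}ka$ in a fixed compact neighborhood of $1$ has measure $\asymp t^{-2}$, so $\int_K|f(a^{-1}ka)|\,dk \gtrsim t^{-2}$, whereas $\Xi^G(a)^2\sigma(a)^{-d}\asymp t^{-2}(\log t)^{2-d}$ is much smaller for $d>2$. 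The same counterexample kills your $L^2$ variant. The point is that strong cuspidality is a cancellation hypothesis --- the vanishing of $\int_{\bar U_Q} f(\cdot\, u)\,du$ --- and it yields nothing once you place $|f|$ (or $|f|^2$) inside the integral; your outline invokes ``vanishing of constant terms'' but sets up the estimate in a form where that vanishing cannot be used. The paper's proof instead keeps the integral un--absolute--valued: after the Cartan decomposition and the partition of $A^+$ into the regions $A^{Q,+}(\delta)$, it compares $I(f,a,\theta_K)$ with the degenerate integral $I^Q(f,a,\theta_K)$ over $K^Q=K_L\ltimes\bar U_Q$, which vanishes identically for strongly cuspidal $f$, and then bounds the difference $I-cI^Q$ for \emph{arbitrary} $f\in\CC(G)$ by three truncation estimates \eqref{1}--\eqref{3}.

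A secondary but real error: the derivative bound on $\theta_K$ built into $\Theta_K$ is not superfluous. In the comparison of the two truncated integrals one must control $|\theta_K(k_Lk(u_Q))j(u_Q)-\theta_K(k_L)j(1)|$, where $k(u_Q)$ is exponentially close to $1$; this requires the uniform Lipschitz bound on $\theta_K$ and yields an estimate $\ll e^{-\delta'\sigma_{G/Z_G}(a)}$. The factor $\sqrt{\|\theta_K\|}$ in the statement arises precisely from interpolating this derivative-based bound against the trivial bound $\ll \|\theta_K\|$ --- it is not a Cauchy--Schwarz artifact, and your $L^2$ route would not reproduce it even if the underlying integral estimate were true.
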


The proof is very similar to Theorem 8.1.1 of \cite{B15}. By the Cartan decomposition $G=KA^+K$, it is enough to consider the case when $x=a\in A^+$. For every standard parabolic subgroup $Q=LU_Q$ of $G$ with $A\subset L$ and $\delta>0$, set ($R(A,U_Q)$ is the set of roots of $A$ in $U_Q$)
$$A^{Q,+}(\delta)=\{a\in A^+|\;|\alpha(a)|\geq e^{\delta \cdot \sigma_{G/Z_G}(a)},\;\forall \alpha\in R(A,U_Q)\}.$$
We choose $\delta>0$ small so that the complement of 
$$\cup_{Q\;\text{standard}} A^{Q,+}(\delta)$$
in $A^+$ is compact modulo the center $Z_G$. Hence we only need to prove the estimate for $a\in A^{Q,+}(\delta)$. 

Let $\bar{Q}=L\bar{U}_Q$ be the opposite parabolic subgroup of $Q$, $K_Q=K\cap Q$. Up to conjugating $L$ by some element in $U_Q$ we may assume that $K_Q\subset L$ and we let $K_L=K_Q$. We can define the function $\theta_{K,L}$ on $K_L$ to be $\theta_{K,L}=\theta_K|_{K_L}$.

Set $K^Q=K_L\ltimes \bar{U}_Q$ and define $\theta_K^Q$ on $K^Q$ to be $\theta_K^Q(k_Lu_Q)=\theta_{K,L}(k_L)$. We fix the Haar measures on $K^Q,K_Q\simeq K_L$ and $\bar{U}_Q$ so that
$$\int_{K^Q}f(k^Q)dk^Q=\int_{K_L}\int_{\bar{U}_Q} f(k_Lu_Q)  du_Q dk_L.$$

For $f\in \CC(G)$ and $a\in A^+$, we define
$$I^Q(f,a,\theta_K)=\int_{K^Q} f(a^{-1}k^Qa)\theta_K^Q(k^Q)dk^Q.$$
Since $I^Q(f,a,\theta_K)=0$ if $f$ is strongly cuspidal, we are reduced to prove the following proposition.

\begin{prop}
There exists a constant $c>0$ depends on the choice of Haar measure such that for all $d>0$, we have
$$|I(f,a,\theta_K)-cI^Q(f,a,\theta_K)|\leq \nu_d(f)(||\theta_K||+\sqrt{||\theta_K||})\Xi^G(a)^2\sigma_{G/Z_G}(a)^{-d}$$
for all $\theta_K\in \Theta_K$, $a\in A^{Q,+}(\delta)$ and $f\in \CC(G)$. Here $\nu_d$ is a norm on $\CC(G)$ depends on $d$.
\end{prop}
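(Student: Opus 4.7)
The plan is to adapt the proof of Theorem 8.1.1 of \cite{B15} to accommodate the extra factor $\theta_K$. I would use the Iwasawa decomposition $G=\bar{Q}K$ to reparameterize $I(f,a,\theta_K)$: for $\bar u\in\bar{U}_Q$ with Iwasawa expansion $\bar u=m(\bar u)n(\bar u)k(\bar u)$ (with $m(\bar u)\in L$, $n(\bar u)\in U_Q$, $k(\bar u)\in K$), the map $\bar u\mapsto K_L k(\bar u)$ identifies $\bar{U}_Q$ with an open full-measure subset of $K_L\bs K$ with Jacobian involving $\delta_Q(m(\bar u))^{-1}$. After this change of variables, $I(f,a,\theta_K)$ becomes an integral over $K_L\times\bar{U}_Q$ of $f(a^{-1}k_L k(\bar u)a)\theta_K(k_Lk(\bar u))$ times the Jacobian. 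Formally replacing $k(\bar u)$ by $\bar u$ (they agree modulo $\bar{Q}$) and using $a^{-1}\bar u a\in\bar{U}_Q$, the resulting main term is exactly $c\cdot I^Q(f,a,\theta_K)$ for a constant $c$ depending only on the Haar measures.

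The error decomposes naturally into two pieces. The first arises from replacing $f(a^{-1}k_L k(\bar u)a)$ by $f(a^{-1}k_L\bar u a)$; using standard $\Xi^G$-type estimates for $\CC(G)$-functions (e.g.\ Proposition 1.5.1 of \cite{B15}) together with the fact that on $A^{Q,+}(\delta)$ the map $\bar u\mapsto a^{-1}\bar u a$ is exponentially contracting at rate $e^{-\delta\sigma_{G/Z_G}(a)}$, this piece is bounded by $\nu_d(f)\|\theta_K\|\,\Xi^G(a)^2\sigma_{G/Z_G}(a)^{-d}$ after absorbing the exponential contraction into the polynomial decay factor. The second, more delicate piece comes from replacing $\theta_K(k_L k(\bar u))$ by $\theta_K(k_L)$. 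Here the key new ingredient is the interpolated Lipschitz bound
$$|\theta_K(k_1)-\theta_K(k_2)|\leq \sqrt{2\|\theta_K\|}\cdot\sqrt{C\,d(k_1,k_2)},$$
obtained by geometrically interpolating between the uniform bound $2\|\theta_K\|$ and the uniform Lipschitz bound $C\cdot d(k_1,k_2)$ built into the definition of $\Theta_K$. Since $d(k(\bar u),1)\ll\|\bar u\|$ and this distance is exponentially small after the $a^{-1}(\cdot)a$-contraction, pairing $\sqrt{d(k(\bar u),1)}$ with the exponential contraction on $\bar{U}_Q$ produces the $\sqrt{\|\theta_K\|}\,\Xi^G(a)^2\sigma_{G/Z_G}(a)^{-d}$ contribution, with excess exponential decay in $\sigma_{G/Z_G}(a)$ swallowing any polynomial factor $\sigma_{G/Z_G}(a)^{-d}$.

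The main obstacle, as I see it, is the uniform bookkeeping of Jacobians and $\Xi^G$-estimates as $a$ varies in $A^{Q,+}(\delta)$ and $\theta_K$ varies in $\Theta_K$, most notably ensuring that the Harish-Chandra-Schwartz seminorm $\nu_d$ can be chosen independently of $\theta_K$. The body of this work is essentially carried out in \cite{B15} for the case $\theta_K\equiv 1$, and the novel contribution here is really only the interpolated Lipschitz estimate above, which cleanly explains the $\sqrt{\|\theta_K\|}$ factor: when $\|\theta_K\|$ is small, this dominates over $\|\theta_K\|$ itself, reflecting the fact that even a small non-constant $\theta_K$ oscillates on the scale $\sqrt{\|\theta_K\|}$ over distances of order one.
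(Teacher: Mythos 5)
Your overall strategy is the same as the paper's: pass to Iwasawa coordinates $k=k_Qk(\bar u)$ on $K$, compare the integrand of $I(f,a,\theta_K)$ with that of $cI^Q(f,a,\theta_K)$ pointwise, and extract the factor $\sqrt{||\theta_K||}$ by interpolating between the sup-norm bound and the Lipschitz bound on $\theta_K$. That interpolation is exactly the paper's key step: it bounds $|\theta_K(k_Lk(u_Q))j(u_Q)-c\theta_K(k_L)|$ both by $C\cdot||\theta_K||$ and by $C\cdot e^{-\delta'\sigma_{G/Z_G}(a)}$ (the latter using the uniform derivative bound built into $\Theta_K$ and the exponential smallness of $u_Q$), and then takes the geometric mean. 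So the one genuinely new ingredient relative to \cite{B15} is correctly identified.

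The gap is that you run the pointwise comparison over all of $\bar U_Q$, where it fails. The approximations $k(\bar u)\approx\bar u$, $j(\bar u)\approx j(1)=c$ and $\theta_K(k_Lk(\bar u))\approx\theta_K(k_L)$ are only exponentially good when $\bar u$ lies in $a\bar U_Q[<\epsilon\sigma_{G/Z_G}(a)]a^{-1}$ --- and note that it is conjugation $\bar u\mapsto a\bar u a^{-1}$ that contracts $\bar U_Q$ for $a\in A^{Q,+}(\delta)$, not $\bar u\mapsto a^{-1}\bar u a$ as you write, which expands. For $\bar u$ of unit size, $\theta_K(k_Lk(\bar u))-\theta_K(k_L)$ is merely $O(||\theta_K||)$ and $k(\bar u)$, $\bar u$ are not close, so neither of your two error pieces is small there; moreover the section $k(\cdot)$ is only defined near $1$. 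The paper therefore first truncates both integrals to $K^{<\epsilon,a}=K_Qk(a\bar U_Q[<\epsilon\sigma_{G/Z_G}(a)]a^{-1})$ and $K^{Q,<\epsilon,a}$, and shows the complementary contributions are each $\ll\nu_d(f)||\theta_K||\Xi^G(a)^2\sigma_{G/Z_G}(a)^{-d}$ using the lower bound $\sigma_{G/Z_G}(a)\ll\sigma_{G/Z_G}(a^{-1}ka)$ off the truncated sets together with the Harish-Chandra--Schwartz decay of $f$ (inequalities \eqref{1} and \eqref{2}); this complementary region is in fact where the arbitrary polynomial decay $\sigma_{G/Z_G}(a)^{-d}$ comes from, the truncated region contributing an outright exponential $e^{-\delta_0\sigma_{G/Z_G}(a)}$. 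Your argument needs this truncation step added; relatedly, without it your ``main term'' is not literally $cI^Q$, since the Jacobian $j(\bar u)$ is not constant on $\bar U_Q$ and must itself be compared with $j(1)$ on the truncated set (the paper folds this into the $\theta_K$ comparison).
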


\begin{proof}
Let $K'=K\cap P_0\bar{N}_0$ and let $u:K'\rightarrow \bar{N}_0$ be the map sending $k$ to the unique element $u(k)$ in $\bar{N}_0$ so that $ku(k)^{-1}\in P_0$. Recall that $P_0=M_0N_0$ is a minimal parabolic subgroup of $G$ and $\bar{P}_0=M_0\bar{N}_0$ is its opposite parabolic subgroup. By the Iwasawa decomposition we know this map is submersive at the identity. Hence we can find a relatively compact open neighborhood $\CU$ of $1$ in $\bar{N}_0$ and an $\BR$-analytic section 
$$k:\CU\rightarrow K',\;u\mapsto k(u)$$
to the map $u(\cdot)$ over $\CU$ with $k(1)=1$. Set $\CU_Q=\CU\cap \bar{U}_Q$ and $\CK=K_Qk(\CU_Q)$. By the same argument as in (8.1.7) of \cite{B15}, we have
\begin{itemize}
\item[(1)] The map $\iota: K_Q\times \CU_Q\rightarrow K,\;(k_Q,u)\mapsto k_Qk(u)$ is an $\BR$-analytic embedding with image $\CK$ and there exists a smooth function $j$ on $\CU_Q$ such that 
$$\int_{\CK}\varphi(k)dk=\int_{K_Q}\int_{\CU_Q} \varphi(k_Q k(u)) j(u)dudk_Q$$
for all $\varphi\in L^1(\CK)$.
\end{itemize}

By Lemma 1.3.1(ii) of \cite{B15}, for $\epsilon>0$ small enough, we have
$$a\bar{U}_Q[<\epsilon \sigma_{G/Z_G}(a)] a^{-1}\subset \CU_Q$$
for all $a\in A^{Q,+}(\delta)$. We set
$$K^{<\epsilon,a}=K_Q k(a\bar{U}_Q[<\epsilon \sigma_{G/Z_G}(a)] a^{-1}),\; K^{Q,<\epsilon,a}=K_L a\bar{U}_Q[<\epsilon \sigma_{G/Z_G}(a)] a^{-1},$$
$$I^{<\epsilon}(f,a,\theta_K)=\int_{K^{<\epsilon,a}} f(a^{-1}ka)\theta_K(k)dk,\;I^{Q,<\epsilon}(f,a,\theta_K)=\int_{K^{Q,<\epsilon,a}} f(a^{-1}k^Qa)\theta_K^Q(k^Q) dk^Q$$
for $a\in A^{Q,+}(\delta)$. Let $0<\delta_0<\delta/2$ and $c=j(1)$. We only need to prove the following 3 inequalities ($\nu_d$ is a norm on $\CC(G)$ depends on $d$)
\begin{equation}\label{1}
|I(f,a,\theta_K)-I^{<\epsilon}(f,a,\theta_K)|\leq \nu_d(f) ||\theta_K||\Xi^G(a)^2\sigma_{G/Z_G}(a)^{-d},    
\end{equation}
\begin{equation}\label{2}
|I^{Q}(f,a,\theta_K)-I^{Q,<\epsilon}(f,a,\theta_K)|\leq \nu_d(f) ||\theta_K||\Xi^G(a)^2\sigma_{G/Z_G}(a)^{-d},
\end{equation}
\begin{equation}\label{3}
|I^{<\epsilon}(f,a,\theta_K)-cI^{Q,<\epsilon}(f,a,\theta_K)|<\nu_{d}(f) \sqrt{||\theta_K||}\Xi^G(a)^2 e^{-\delta_0\cdot \sigma_{G/Z_G}(a)}
\end{equation}
for all $\theta_K\in \Theta_K$, $a\in A^{Q,+}(\delta)$ and $f\in \CC(G)$. 

\begin{rmk}
The three inequalities above is an analogue of (8.1.8)-(8.1.10) of \cite{B15}. In our case, since $K_Q=K_L$ is compact, we only need to use $K_Q,K_L$ instead of $K_Q[<\epsilon \sigma_{G/Z_G}(a)],K_L[<\epsilon \sigma_{G/Z_G}(a)]$ in the definition of  $K^{<\epsilon,a},K^{Q,<\epsilon,a}$. 
\end{rmk}

By the same argument as in (8.1.11) of \cite{B15} (note that Proposition 6.4.1(iii) and Proposition 6.8.1(v) of loc. cit. are trivial for our case since $K$ is compact), we have
\begin{equation}
\sigma_{G/Z_G}(a)\ll \sigma_{G/Z_G}(a^{-1}k^Qa),\;\sigma_{G/Z_G}(a)\ll \sigma_{G/Z_G}(a^{-1}ka)
\end{equation}
for all $a\in A^{Q,+}(\delta)$, $k\in K-K^{<\epsilon,a}$, and $k^Q\in K^Q-K^{Q,<\epsilon,a}$. As a result, we have
$$|I(f,a,\theta_K)-I^{<\epsilon}(f,a,\theta_K)|\leq \int_{K-K^{<\epsilon,a}} |f(a^{-1}ka)\theta_K(k)|dk$$
$$\leq \nu_d(f)||\theta_K||\cdot \int_{K-K^{<\epsilon,a}} \Xi^G(a^{-1}ka)\sigma_G(a^{-1}ka)^{-d}dk\ll \nu_d(f)||\theta_K||\sigma_{G/Z_G}(a)^{-d}\cdot \int_{K}\Xi^G(a^{-1}ka)dk$$
$$=\nu_d(f)||\theta_K||\sigma_{G/Z_G}(a)^{-d}\Xi^G(a^{-1})\Xi^G(a)\sim \nu_d(f)||\theta_K||\sigma_{G/Z_G}(a)^{-d}\Xi^G(a)^2,$$
and
$$|I^{Q}(f,a,\theta_K)-I^{Q,<\epsilon}(f,a,\theta_K)|\leq \int_{K^Q-K^{Q,<\epsilon,a}} |f(a^{-1}ka)\theta_K(k)|dk$$
$$\leq \nu_d(f)||\theta_K||
\cdot \int_{K^Q-K^{Q,<\epsilon,a}} \Xi^G(a^{-1}ka)\sigma_G(a^{-1}ka)^{-3d}dk$$
$$\ll \nu_d(f)||\theta_K||\sigma_{G/Z_G}(a)^{-2d}\cdot \int_{K^Q}\Xi^G(a^{-1}ka)\sigma_G(a^{-1}ka)^{-d}dk$$
$$=\nu_d(f)||\theta_K||\sigma_{G/Z_G}(a)^{-2d}\delta_{Q}(a)^{-1}\cdot \int_{K_L}\int_{\bar{U}_Q}\Xi^G(a^{-1}kau)\sigma_G(a^{-1}kau)^{-d}dk$$
$$\ll \nu_d(f)||\theta_K||\sigma_{G/Z_G}(a)^{-2d}\delta_{Q}(a)^{-1}\cdot \int_{K_L}\Xi^L(a^{-1}ka)dk$$
$$\sim \nu_d(f)||\theta_K||\sigma_{G/Z_G}(a)^{-2d}\delta_{Q}(a)^{-1}\Xi^L(a^{-1})\Xi^L(a)\ll \nu_d(f)||\theta_K||\sigma_{G/Z_G}(a)^{-d}\Xi^G(a)^2$$
for all $\theta_K\in \Theta_K$, $a\in A^{Q,+}(\delta)$ and $f\in \CC(G)$. Here we have used Proposition 1.5.1 (i), (iv) and (vi) of \cite{B15}. The norm $\nu_d$ is the above inequalities can be chosen to be $\nu_d(f)=\sup_{g\in G} |\frac{f(g)}{\Xi^G(g)\sigma_G(g)^{-3d}}|$. This proves \eqref{1} and \eqref{2}.

For \eqref{3}, we have
$$I^{<\epsilon}(f,a,\theta_K)=\int_{K_L} \int_{a\bar{U}_Q[<\epsilon \sigma_{G/Z_G}(a)]a^{-1}} f(a^{-1}k_L k(u_Q)a)\theta_K(k_L k(u_Q)) j(u_Q)du_Q dk_Q,$$
$$I^{Q,<\epsilon}(f,a,\theta_K)=\int_{K_L} \int_{a\bar{U}_Q[<\epsilon \sigma_{G/Z_G}(a)]a^{-1}} f(a^{-1}k_L u_Q a)\theta_K(k_L) du_Q dk_L.$$
Fix $\delta'>0,d'>0$ with $0<2\delta_0<\delta'<\delta$ and $d'>0$ large. We first show that in order to prove \eqref{3}, it is enough to prove the following statement.

\begin{itemize}
\item[(2)] For $\epsilon>0$ small, we have
$$|f(a^{-1}k_L k(u_Q)a)\theta_K(k_L k(u_Q)) j(u_Q)-cf(a^{-1}k^Qa)\theta_K(k_L)|$$
$$< \nu_d(f)\sqrt{||\theta_K||}\Xi^G(a^{-1}k^Qa)\sigma_{G}(a^{-1}k^Qa)^{-d'}e^{-\frac{\delta'}{2}\cdot \sigma_{G/Z_G}(a)}$$
for all $\theta_K\in \Theta_K$, $a\in A^{Q,+}(\delta)$, $u_Q\in a\bar{U}_Q[<\epsilon \sigma_{G/Z_G}(a)]a^{-1}$, $k_L\in K_L$, $f\in \CC(G)$, and for some norm $\nu_d$ on $\CC(G)$. Here we have set $k^Q=k_Lu_Q$. 
\end{itemize}

In fact, if (2) holds, then we have
$$|I^{<\epsilon}(f,a,\theta_K)-cI^{Q,<\epsilon}(f,a,\theta_K)|< \nu_d(f)\sqrt{||\theta_K||}e^{-\frac{\delta'}{2}\cdot \sigma_{G/Z_G}(a)}\int_{K^{Q,<\epsilon,a}}\Xi^G(a^{-1}k^Qa)\sigma_{G}(a^{-1}k^Qa)^{-d'}dk^Q$$
$$\leq \nu_d(f)\sqrt{||\theta_K||}e^{-\frac{\delta'}{2}\cdot \sigma_{G/Z_G}(a)} \int_{K^Q} \Xi^G(a^{-1}k^Qa)\sigma_{G}(a^{-1}k^Qa)^{-d'}dk$$
$$\ll \nu_d(f)\sqrt{||\theta_K||}e^{-\frac{\delta'}{2}\cdot \sigma_{G/Z_G}(a)}\Xi^G(a^{-1})\Xi^G(a)\sim \nu_d(f)\sqrt{||\theta_K||}e^{-\frac{\delta'}{2}\cdot \sigma_{G/Z_G}(a)}\Xi^G(a)^2$$
for all $\theta_K\in \Theta_K$, $a\in A^{Q,+}(\delta)$ and $f\in \CC(G/A_{G}^{\circ})$. Here the inequality in the third line follows from Proposition 1.5.1(iv) and (vi) of \cite{B15}. This proves \eqref{3}.

To prove (2), first by Lemma 1.3.1(ii) of \cite{B15} together with the facts that $k_L$ belongs to a compact set and $||\theta_K'||,||\theta_K||$ are bounded, we know that there exists a constant $C$ that depends on the upper bounds of $||\theta_K||,\;||\theta_K'||,\;||j||,\;||j'||$ (in particular, it only depends on the set $\Theta_K$) such that 
$$|\theta_K(k_L k(u_Q)) j(u_Q)-c\theta_K(k_L)|=|\theta_K(k_L k(u_Q)) j(u_Q)-\theta_K(k_L)j(1)|\leq C\cdot e^{-\delta'\cdot \sigma_{G/Z_G}(a)},$$
$$|\theta_K(k_L k(u_Q)) j(u_Q)-c\theta_K(k_L)|\leq C\cdot ||\theta_K||$$
for all $\theta_K\in \Theta_K$, $a\in A^{Q,+}(\delta)$, $u_Q\in a\bar{U}_Q[<\epsilon \sigma_{G/Z_G}(a)]a^{-1}$, $k_L\in K_L$. This implies that 
$$|\theta_K(k_L k(u_Q)) j(u_Q)-c\theta_K(k_L)| \leq C\cdot e^{-\frac{\delta'}{2}\cdot \sigma_{G/Z_G}(a)}\cdot \sqrt{||\theta_K||}$$
for all $\theta_K\in \Theta_K$, $a\in A^{Q,+}(\delta)$, $u_Q\in a\bar{U}_Q[<\epsilon \sigma_{G/Z_G}(a)]a^{-1}$, $k_L\in K_L$. Hence in order to prove (2), it is enough to prove the following statement:

\begin{itemize}
\item[(3)] For $\epsilon>0$ small, we have
$$|f(a^{-1}k_L k(u_Q)a)-f(a^{-1}k^Qa)|< \nu_d(f)e^{-\delta'\cdot \sigma_{G/Z_G}(a)}\Xi^G(a^{-1}k^Qa)\sigma_{G}(a^{-1}k^Qa)^{-d'}$$
for all $a\in A^{Q,+}(\delta)$, $u_Q\in a\bar{U}_Q[<\epsilon \sigma_{G/Z_G}(a)]a^{-1}$, $k_L\in K_L$, $f\in \CC(G)$, and for some norm $\nu_d$ on $\CC(G)$.
\end{itemize}
This follows from the same argument as (8.1.17) of \cite{B15}. Now we have finished the proof of the proposition.
\end{proof}

\begin{rmk}
From the proof of the proposition, we know that we can replace $||\theta_K||$ by $||\theta_{K,f}||$ where
$$||\theta_{K,f}||=\max_{k\in K\cap Supp(\theta_f)} |\theta_K(k)|.$$
We can also replace the space $\CC_{scusp}(G)$ by $\CC_{scusp}(G/A_{G}^{\circ})$-- the Mellin transform of the space $\CC_{scusp}(G)$ with respect to the trivial character of $A_{G}^{\circ}$.
\end{rmk}

\begin{defn}
For $f\in \CC_{scusp}(G/A_{G}^{\circ})$, define
$$I(f,\theta_K)=\int_{KA_{G}^{\circ}\back G} I(f,x,\theta_K)dx.$$
By Proposition \ref{convergence} above and Proposition 1.5.1(v) of \cite{B15}, we know the integral is absolutely convergent.
\end{defn}

\begin{rmk}
Note that the double integral 
$$\int_{KA_{G}^{\circ}\back G}\int_{K} f(x^{-1}kx)\theta_K(k) dk dx$$
is not absolutely convergent in general. 
\end{rmk}

\subsection{The spectral side}\label{sec spectral definition}
Let $\omega$ be a finite dimensional representation of $K$ and $\theta_\omega(k)=\tr(\omega(k)),\;k\in K$ be its character. For $f\in \CC_{scusp}(G/A_{G}^{\circ})$, we define the spectral side of the trace formula to be 
$$I_{spec}(f,\omega)=\int_{\CX(G/A_{G}^{\circ})} D(\pi)\theta_f(\pi) m(\bar{\pi},\omega^{\vee})d\pi.$$
Here $\CX(G/A_{G}^{\circ})$ (resp. $\CX_{ell}(G/A_{G}^{\circ})$) is the set of virtual tempered representations (resp. elliptic representations) of $G$ whose central character is trivial on $A_{G}^{\circ}$ defined in Section 2.7 of \cite{B15}. The number $D(\pi)$ and the measure $d\pi$ were also defined in Section 2.7 of \cite{B15}. As in Section 5.4 of loc. cit., for $\pi\in \CX(G/A_{G}^{\circ})$, we can define a map
$$f\in \CC_{scusp}(G/A_{G}^{\circ})\mapsto \theta_f(\pi)\in \BC$$
via the weighted character (this map was denoted by $f\mapsto \hat{\theta}_f(\pi)$ in loc. cit.). 

\subsection{The geometric side}\label{sec geometric definition}
In this section we will define the geometric side of the trace formula and the geometric multiplicity. These have already been defined in \cite{Wan19} when $K$ is connected (i.e. $K=K^\circ$). We just needs to slightly extend the definitions to the non-connected case. We also need to regularize the integral in the geometric multiplicity.

\begin{defn}\label{support of geometric multiplicity}(the support of geometric multiplicity) Let $\CS(G,K)$ be the set of $K$-conjugacy classes $x\in K$  such that the pair $(G_x,K_x)$ is a minimal spherical pair. We refer the reader to Section 2.6 of \cite{Wan19} for the definition of minimal spherical pair. The set $\CS(G,K)$ is the support of the geometric multiplicity.
\end{defn}

\begin{rmk}
\begin{enumerate}
\item The pair $(G_x,K_x)$ is a minimal spherical pair if and only if $G_x$ is split modulo the center.
\item This definition is just Definition 4.1 of \cite{Wan19}. Here since $K$ is compact, the elliptic condition in loc. cit. is automatic. Also the quasi-split condition in loc. cit. is a direct consequence of the condition that $(G_x,K_x)$ is a minimal spherical pair.
\end{enumerate}

\end{rmk}

In order to define a measure on $\CS(G,K)$, we will give an equivalent definition of $\CS(G,K)$. The next definition is an analog of Definition 4.3 of \cite{Wan19}.

\begin{defn}\label{defn support}
Let $\CT(G,K)$ be the set of all the closed (not necessarily connected) abelian subgroups $T$ of $K$ (up to $K$-conjugation) satisfying the following three conditions.
\begin{enumerate}
\item The pair $(G_T,K_T)$ is a minimal spherical pair.
\item We have $T=Z_{Z_G(T)}\cap K$ where $Z_{Z_G(T)}$ is the center of $Z_G(T)$. In particular, we have $Z_{G,K}=Z_G\cap K\subset T$. 
\item There exists $t\in T$ such that $(G_t,K_t)=(G_T,K_T)$.
\end{enumerate}
Let $\CT(G,K)^\circ=\{T\in \CT(G,K)|\; T=T^\circ Z_{G,K}\}$ where $T^\circ$ is the neutral component of $T$ which is a subtorus of $K^\circ$.
\end{defn}

\noindent
For $T\in \CT(G,K)$, there exists a nonempty  subset $C(T,K)$ of the component group $T/T^\circ$ satisfying the following two conditions:
\begin{itemize}
\item For $\gamma \in C(T,K)$, $(G_t,K_t)=(G_T,K_T)$ for almost all $t\in \gamma T^\circ$.
\item For $\gamma \in T/T^\circ-C(T,K)$, $(G_t,K_t)\neq (G_T,K_T)$ for all $t\in \gamma T^\circ$.
\end{itemize}

\begin{defn}
For $T\in \CT(G,K)$, let $T_K=\cup_{\gamma \in C(T,K)} \gamma T^\circ\subset T\subset K$ and $W(K,T)=N_K(T)/Z_K(T)$ where $N_K(T)$ is the normalizer of $T$ in $K$. Let $T_K'$ be the Zariski open subset of $T_K$ consisting of those elements $t\in T_K$ such that $(G_t,K_t)=(G_T,K_T)$.
\end{defn}

\begin{rmk}
For $T\in \CT(G,K)^\circ$, $(G_t,K_t)=(G_T,K_T)$ for almost all $t\in T$ and we have $T_K=T$.
\end{rmk}

\begin{lem}
The support of the geometric multiplicity $\CS(G,K)$ is equal to the set $\cup_{T\in \CT(G,K)} T_K'//W(K,T)$.
\end{lem}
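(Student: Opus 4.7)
The plan is to prove the two set-theoretic inclusions, interpreting both sides as sets of $K$-conjugacy classes in $K$.

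The inclusion $\cup_{T\in \CT(G,K)} T_K'//W(K,T)\subset \CS(G,K)$ is immediate: for any $T\in \CT(G,K)$ and any $t\in T_K'$, we have $(G_t,K_t)=(G_T,K_T)$ by the very definition of $T_K'$, and $(G_T,K_T)$ is a minimal spherical pair by condition~(1) of Definition~\ref{defn support}. Hence the $K$-conjugacy class of $t$ lies in $\CS(G,K)$, and passing to the $W(K,T)$-quotient is harmless since $W(K,T)$ acts by $K$-conjugation.

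For the reverse inclusion, given a representative $x\in K$ of a class in $\CS(G,K)$, I would set
$$T:=Z_{Z_G(x)}\cap K,$$
the intersection with $K$ of the center of the full centralizer of $x$ in $G$. This is a closed abelian subgroup of $K$, and it contains $x$ because $x$ is automatically central in $Z_G(x)$. The key observation is then the identity $Z_G(T)=Z_G(x)$: one direction follows from $T\subset Z_{Z_G(x)}$, since every element of $Z_G(x)$ commutes with its own center and hence with $T$, so $Z_G(x)\subset Z_G(T)$; the other direction follows from $x\in T$, which gives $Z_G(T)\subset Z_G(x)$. Given this identity, the three defining conditions of $\CT(G,K)$ in Definition~\ref{defn support} are straightforward: condition~(1) holds because $(G_T,K_T)=(G_x,K_x)$ is minimal spherical by hypothesis on $x$; condition~(2) is the tautology $Z_{Z_G(T)}\cap K=Z_{Z_G(x)}\cap K=T$; and condition~(3) is witnessed by the element $t=x$ itself.

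It remains to check that $x\in T_K'$, equivalently that the coset $xT^\circ$ belongs to $C(T,K)$. For this I would invoke the standard local constancy of the neutral centralizer: for $s\in T^\circ$ sufficiently close to the identity, the semisimple element $t':=xs$ satisfies $G_{t'}\subset G_x$. Combined with the trivial containment $G_T\subset G_{t'}$ (which follows from $T\subset Z_G(t')$, since $T$ is abelian and contains $t'$) and the equality $G_x=G_T$ already established, this yields $G_{t'}=G_T$, and similarly $K_{t'}=K_T$. So $(G_{t'},K_{t'})=(G_T,K_T)$ on an open neighborhood of $x$ in $xT^\circ$, which by the defining property of $C(T,K)$ forces $xT^\circ\in C(T,K)$, and hence $x\in T_K'$.

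The main obstacle is this final step: one must justify carefully the local constancy of the neutral centralizer under perturbation within $xT^\circ$. This is standard for semisimple elements in real reductive groups, but deserves attention here because $T$ and $K$ need not be connected; the cleanest argument runs through the exponential map on $\Ft=\mathrm{Lie}(T^\circ)$ and the containment $\Ft\subset \Fg_x$ coming from $T^\circ\subset Z_{G_x}$.
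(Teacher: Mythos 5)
Your proof is correct and follows the same route as the paper, which simply sets $T=Z_{Z_G(t)}\cap K$ for the reverse inclusion and declares the verification easy; you have supplied exactly the details (the identity $Z_G(T)=Z_G(x)$ and the check of the three conditions) that the paper leaves implicit. The only remark is that your final step is more elaborate than necessary: since the dichotomy defining $C(T,K)$ says that a coset \emph{not} in $C(T,K)$ has $(G_t,K_t)\neq(G_T,K_T)$ for \emph{all} $t$ in it, the single witness $t=x$ already forces $xT^\circ\in C(T,K)$, so the local-constancy argument can be dispensed with.
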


\begin{proof}
From the definition it is clear that $T_K'//W(K,T)$ belongs to the support $\CS(G,K)$ for all $T\in \CT(G,K)$. For the other direction, given $t\in \CS(G,K)$, let $T=Z_{Z_G(t)}\cap K$. Then it is easy to see that $T\in \CT(G,K)$ and $t\in T_K'//W(K,T)$. This proves the lemma.
\end{proof}

The lemma above gives us a natural measure on the set $\CS(G,K)$. More specifically, since $T_K$ is a finite union of translations of the subtori $T^\circ$, the Haar measure on $T^\circ$ induces a measure on $T_K$ (we choose the Haar meausre on $T^\circ$ so that the total volume is equal to 1) such that $T_K-T_K'$ has measure zero (because $T_K'$ is a Zariski open subset of $T_K$). This gives us a measure on $T_K'$ and hence a measure on the support $\CS(G,K)$.

\begin{lem}\label{homogeneous degree}
For $T\in \CT(G,K)$, we have $\dim(K_T)=\frac{\delta(G_T)}{2}+\dim(T)$ and
$$\dim(K)-\dim(K_T)-\frac{\dim(G)-\dim(G_T)}{2}+\frac{\delta(G)}{2}+\dim(T)=\dim(K)$$
\end{lem}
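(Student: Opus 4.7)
The plan is to use the $\theta$-stability of $\Fg_T$ and the three defining properties of $\CT(G,K)$ to split $\dim(K_T)$ into a derived and a central contribution.  Since $T\subset K$ gives $\Ft\subset \Fk$, the Cartan involution preserves $\Fg_T=Z_\Fg(\Ft)$, yielding the $\theta$-stable decomposition $\Fg_T=\Fk_T\oplus \Fp_T$ with $\Fk_T=\Fg_T\cap\Fk$.  Writing $\Fg_T=\Fg_T^{\mathrm{der}}\oplus \Fz(\Fg_T)$ as a $\theta$-stable Lie-algebra direct sum, I obtain
$$\dim(K_T)=\dim(\Fk\cap \Fg_T^{\mathrm{der}})+\dim(\Fk\cap \Fz(\Fg_T)).$$

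For the central summand, I would unpack condition (2) of Definition \ref{defn support}: the group-level equality $T=Z_{Z_G(T)}\cap K$ translates, using $\Lie(Z_G(T))=\Fg_T$ and $\Lie(Z_{Z_G(T)})=\Fz(\Fg_T)$, into the Lie-algebra identity $\Ft=\Fz(\Fg_T)\cap \Fk$, so this summand contributes $\dim(T)$.  For the derived summand, the minimal spherical hypothesis is equivalent, as recalled in the paper, to $G_T$ being split modulo its center, i.e.\ to $G_T^{\mathrm{der}}$ being a split real semisimple group.  Applying the root-space argument already used in the proof of Lemma \ref{Cartan decomposition}, for a split semisimple real Lie algebra $\Fh$ the Cartan involution pairs $\Fh_\alpha$ with $\Fh_{-\alpha}$, giving $\dim(\Fk_H)=(\dim H-\rank H)/2=\delta(H)/2$.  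Since passing from $G_T$ to $G_T^{\mathrm{der}}$ subtracts $\dim Z(G_T)^\circ$ from both $\dim$ and $\rank$, one has $\delta(G_T^{\mathrm{der}})=\delta(G_T)$, so this contribution is $\delta(G_T)/2$, and the first identity $\dim(K_T)=\delta(G_T)/2+\dim(T)$ follows.

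For the second identity, cancelling $\dim(K)$ from both sides and using $\delta(G)=\dim(G)-\rank(G)$ rearranges it to $\dim(K_T)=(\dim(G_T)-\rank(G))/2+\dim(T)$.  Rewriting the first identity as $\dim(K_T)=(\dim(G_T)-\rank(G_T))/2+\dim(T)$, the two identities are equivalent modulo $\rank(G_T)=\rank(G)$.  For the rank equality I would invoke condition (3) of Definition \ref{defn support}: there exists $t\in T$ with $G_t=G_T$, and because $t\in K$ is semisimple, any maximal torus of $G$ containing $t$ lies in $G_t$, forcing $\rank(G)=\rank(G_t)=\rank(G_T)$.

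The only delicate step is verifying that condition (2), which is a group-level statement, actually descends to the Lie-algebra identity $\Ft=\Fz(\Fg_T)\cap \Fk$; one must check that the Lie algebra of the (possibly disconnected) center $Z_{Z_G(T)}$ really is $\Fz(\Fg_T)$, which reduces to the fact that the Lie algebra of a closed subgroup depends only on its identity component.  Once this is in hand, the rest is routine linear algebra combined with the split-case root-space computation already carried out for Lemma \ref{Cartan decomposition}.
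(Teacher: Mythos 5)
Your argument is correct and lands on the same arithmetic as the paper, but the route to the first identity is justified differently. The paper starts from the defining dimension count of a minimal spherical pair, $\dim(K_T)=\dim(G_T)-\dim(B_T)+\dim(K_T\cap Z_{G_T})$, and then uses $\dim(G_T)-\dim(B_T)=\delta(G_T)/2$ together with $\dim(K\cap Z_{G_T})=\dim(T)$; you instead pass through the equivalent characterization that $G_T$ is split modulo its center, split $\Fg_T$ into derived part plus center, and compute $\dim(\Fk\cap\Fg_T^{\mathrm{der}})=\delta(G_T)/2$ by the same root-space pairing $X_\alpha\mapsto X_{-\alpha}$ used in Lemma \ref{Cartan decomposition}. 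The two are really the same count (for a split group, $\dim G-\dim B$ equals the number of positive roots, which is $\dim$ of the maximal compact of the derived group), so what your version buys is independence from the precise formulation of ``minimal spherical pair'' in \cite{Wan19}, at the cost of having to check $\theta$-stability of the decomposition, which you do. For the second identity your reduction to $\rank(G_T)=\rank(G)$, justified via condition (3) of Definition \ref{defn support}, makes explicit a step the paper uses silently, and is correct. The one place where your justification is slightly off is the ``delicate step'': the relevant gap is not that the Lie algebra of a closed subgroup sees only its identity component, but that $\Lie(Z_{Z_G(T)})$ is the subspace of $\Fz(\Fg_T)$ fixed by the finite group $Z_G(T)/G_T$, which can in principle be a proper subspace; condition (2) therefore directly yields only $\Ft\subseteq\Fk\cap\Fz(\Fg_T)$, and the reverse inclusion needs a word. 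The paper asserts the same equality $\dim(K\cap Z_{G_T})=\dim(T)$ without further comment, so this is a shared gloss rather than a defect specific to your write-up, but if you want the argument airtight you should either verify that the component group acts trivially on $\Fz(\Fg_T)$ here or argue the reverse inclusion directly from the maximality built into Definition \ref{defn support}.
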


\begin{proof}
By the definition of $\CT(G,K)$, we know that (here $B_T$ is a Borel subgroup of $G_T$) 
$$\dim(K_T)=\dim(G_T)-\dim(B_T)+\dim(K_T\cap Z_{G_T})=\dim(G_T)-\dim(B_T)+\dim(K\cap Z_{G_T})$$
$$=\dim(G_T)-\dim(B_T)+\dim(T)=\frac{\delta(G_T)}{2}+\dim(T).$$
For the second equation, by the first equation, we have 
$$\dim(K)-\dim(K_T)-\frac{\dim(G)-\dim(G_T)}{2}+\frac{\delta(G)}{2}+\dim(T)$$
$$=\dim(K)-\frac{\delta(G_T)}{2}-\frac{\dim(G)-\dim(G_T)}{2}+\frac{\delta(G)}{2}=\dim(K)-\frac{\dim(G)-rank(G_T)}{2}+\frac{\delta(G)}{2}=\dim(K).$$
This proves the lemma.
\end{proof}

\begin{defn}\label{defn geom multiplicity group}
For $\theta\in QC(G),\;\theta_K\in C^\infty(K)^K$ and $s\in \BC$ ($C^\infty(K)^K$ is the space of smooth $K$-invariant functions on $K$), define
$$m_{geom,G,K,s}(\theta,\theta_K):=\int_{\CS(G,K)} \frac{1}{c(G_t,K\cap G_t,\BR)\cdot |Z_K(t):K\cap G_t|}D^{G}(t)^{1/2}\Delta(t)^{s-1/2}\theta_{K}(t) c_\theta(t)dt$$
where the constant $c(G_t,K\cap G_t,\BR)$ is the number of connected components of $B_t\cap (K\cap G_t)$ ($B_t$ is any Borel subgroup of $G_t$) defined in Section 5 of \cite{Wan19} and $\Delta(t)=D^G(t)D^K(t)^{-2}$.
\end{defn}

We also need to define the Lie algebra analogue of the geometric multiplicity. Let $\CS_{Lie}(G,K)$ be the union of $\Ft$ for $T\in \CT(G,K)^{\circ}$ where $\Ft$ is the Lie algebra of $T^\circ$. The measure on $T^\circ//W(K,T)$ induces a measure on $\CS_{Lie}(G,K)$.

\begin{defn}
For $\theta\in QC_c(\Fg),\;\theta_K\in C^\infty(\Fk)^K$ and $s\in \BC$ ($C^\infty(\Fk)^K$ is the space of smooth $K$-invariant functions on $K$), define
$$m_{geom,G,K,s,Lie}(\theta,\theta_K):=\int_{\CS_{Lie}(G,K)} \frac{1}{c(G_T,K\cap G_T,\BR)}D^{G}(T)^{1/2}\Delta(T)^{s-1/2}\theta_{K}(T) c_\theta(T)dT$$
where the constant $c(G_T,K\cap G_T,\BR)$ is the number of connected components of $B_T\cap (K\cap G_T)$ ($B_T$ is any Borel subgroup of $G_T$) and $\Delta(T)=D^G(T)D^{K}(T)^{-2}$.
\end{defn}

\begin{prop}\label{prop geometric side convergence}
The integral defining $m_{geom,G,K,s}(\theta,\theta_K)$ is absolutely convergent when $\frac{1}{2}>Re(s)>0$. Moreover, the limit
$$\lim_{s\rightarrow 0^+} m_{geom,G,K,s}(\theta,\theta_K)$$
exists and it defines a continuous linear form on $QC(G)\times C^\infty(K)^K$.
\end{prop}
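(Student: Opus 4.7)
The plan is to decompose $\CS(G,K)=\cup_{T\in\CT(G,K)}T_K'//W(K,T)$ and analyse the integral on each piece. Each such piece carries Haar measure inherited from $T^\circ$, sits inside the compact group $K$, and is Zariski open in $T_K$; the only singularity of the integrand arises along the boundary $T_K\setminus T_K'$, where $G_t$ properly contains $G_T$, equivalently one or more roots of an ambient maximal torus $T_{\max}\supset T^\circ$ become trivial on $t$.

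For absolute convergence when $0<\Re(s)<1/2$, I will use the bound $|D^G(t)^{1/2}c_\theta(t)|\le \nu(\theta)$ on $K$, where $\nu$ is a continuous seminorm on $QC(G)$; this follows from the germ expansion of a quasi-character (Section~4.2 of \cite{B15}) together with local boundedness of $D^G(X)^{1/2}\hat{j}(\CO,X)$ on $\Fg$, as used in Lemma~\ref{lemma quasi-character homogeneous}. Combined with $|\theta_K(t)|\le\|\theta_K\|$ and boundedness of $D^K$ on the compact group $K$, the integrand is dominated by $C\,\nu(\theta)\,\|\theta_K\|\,D^G(t)^{\Re(s)-1/2}$. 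Near a codimension-$r$ portion of the boundary, the vanishing roots come in pairs $\{\pm\alpha_i\}$ (or Galois-conjugate pairs), so $D^G(t)\sim\prod_{i=1}^{r}|x_i|^{2}$ in transverse coordinates $x_i$, and $D^G(t)^{\Re(s)-1/2}\sim\prod_i|x_i|^{2\Re(s)-1}$ is integrable provided $\Re(s)>0$. This yields absolute convergence throughout the strip.

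For the limit $s\to 0^+$, the dominating bound $D^G(t)^{-1/2}\sim\prod|x_i|^{-1}$ is \emph{not} integrable, so absolute convergence fails at $s=0$ and a genuine regularisation is required. The plan is to localise near each boundary point $t_0$, apply the germ expansion
\[
\theta(t_0\exp X)=\sum_{\CO\in Nil_{reg}(\Fg_{t_0})}c_{\theta,\CO}(t_0)\,\hat{j}(\CO,X)+O(|X|),\qquad X\in\Ft,
\]
and split the integrand into a leading germ part and an $O(|X|)$ remainder. The remainder is dominated by $D^G(t)^{\Re(s)-1/2}|X|\ll \prod|x_i|^{2\Re(s)-1}|X|$, which is integrable uniformly in $s\in[0,1/2)$, so by dominated convergence its contribution has a well-defined limit at $s=0$. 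For the leading germ part, the homogeneity $\hat{j}(\CO,\lambda X)=|\lambda|^{-d_\CO}\hat{j}(\CO,X)$ will make the $X$-integral explicit in $s$, and the permutation action on $Nil_{reg}(\Fg_{t_0})$ supplied by Lemma~\ref{lemma nilpotent orbit}, combined with the symmetry $X\mapsto -X$ on $\Ft$ and averaging over $\CO$ in the definition of $c_\theta$, should produce a parity-based cancellation forcing the singular contribution to vanish in the limit $s\to 0^+$. Continuity on $QC(G)\times C^\infty(K)^K$ then follows from the continuous-seminorm dependence of $\nu(\theta)$ and of the regular germs $c_{\theta,\CO}(t_0)$.

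I expect the main obstacle, and presumably the content of the technical lemma deferred to the Appendix, to be this parity-based cancellation for the leading germ contribution; the remaining steps are routine wall-by-wall integrability estimates.
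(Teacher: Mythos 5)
There is a genuine gap, and it occurs already in your convergence estimate. You dominate $|\Delta(t)^{s-1/2}|$ by $C\,D^G(t)^{\Re(s)-1/2}$, discarding the factor $D^K(t)^{1-2s}$ on the grounds that $D^K$ is bounded on the compact group $K$. The discarded factor is bounded \emph{above}, so the domination is valid, but it is exactly the factor that makes the integral converge: $D^K$ vanishes on the singular strata, and its vanishing must be played off against that of $D^G$. Concretely, for $G=\SL(3,\BR)$, $K=\SO(3)$, $T=\SO(2)$, near $X=0$ in the one--dimensional $\Ft$ one has $D^G(X)\sim|X|^{6}$ and $D^K(X)\sim|X|^{2}$, so $\Delta(X)^{s-1/2}\sim|X|^{-1+2s}$ is integrable for all $\Re(s)>0$, whereas your dominating function $D^G(X)^{\Re(s)-1/2}\sim|X|^{6\Re(s)-3}$ is integrable only for $\Re(s)>1/3$. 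Relatedly, your local model $D^G(t)\sim\prod_{i=1}^{r}|x_i|^{2}$ with $r$ the codimension of the stratum fails at the deeper strata (at $t=1$ the number of vanishing root pairs is $(\dim G-\dim G_T)/2$, not $\dim T$). The correct power counting is precisely the content of Lemma \ref{homogeneous degree}: the homogeneity degree of $\Delta^{-1/2}$ along $\Ft$ is exactly $-\dim T$, so the singularity is borderline (logarithmic) at $s=0$ and integrable for $\Re(s)>0$; this is what the deferred Lemma \ref{lemma geometric side convergence} establishes (together with the quantitative statement $s^{d}\int\Delta^{s-1/2}\to 0$), by induction on strata and a dyadic scaling argument.

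Your mechanism for the limit $s\to0^{+}$ is also not the one that works. The Appendix lemma is not a cancellation statement, and there is no parity cancellation under $X\mapsto-X$ to be had: the borderline singularity is removed not because the leading germ contribution vanishes, but because of a homogeneity argument. The paper's route is: semisimple descent (Claim 1) to reduce to a neighborhood of $1$; descent to the Lie algebra (Claim 3); the identity $\lim_{s\to0^{+}}m_{geom,\ldots,s,Lie}((M_{\lambda,\dim K}-1)^{d}\Theta)=0$ (Claims 4--6, which rests on $s^{d}\int\Delta^{s-1/2}\to0$); and then the algebraic decomposition $\theta=(M_{\lambda,l}-1)^{d}\theta_1+\theta_2$ with $0\notin\mathrm{Supp}(\theta_2)$ from Lemma \ref{lemma quasi-character homogeneous} (and its two-variable version, Lemma \ref{homogeneous projective}, for nonconstant $\theta_K$). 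When $G$ is split one cannot reach all of $QC_c(\Fg)$ this way, and the residual span of the $\hat{j}(\CO,\cdot)$ for $\CO\in Nil_{reg}(\Fg)$ is handled by Lemma \ref{lemma nilpotent orbit} (an outer conjugation identifying the contributions of different regular orbits, reducing differences to quasi-characters supported away from $0$) together with the fact that $\sum_{\CO}\hat{j}(\CO,\cdot)$ is parabolically induced from a maximal torus. Your instinct to invoke Lemma \ref{lemma nilpotent orbit} is in the right spirit, but it enters through conjugation--invariance of the regularized linear form, not through a sign cancellation; and your germ-plus-remainder splitting, even if repaired with the correct $\Delta$-power counting, does not by itself show that the borderline-divergent leading term has a finite limit.
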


\begin{rmk}
The topology on $QC(G)$ was defined in Section 4.4 of \cite{B15}. The topology on $C^\infty(K)^K$ is induced from the usual locally convex topology on $C^\infty(K)$.
\end{rmk}

\begin{proof}
To simplify the notation, we will assume that the center $Z_G$ is trivial. In order to prove this proposition, we first need a lemma.

\begin{lem}\label{lemma geometric side convergence}
The integral $\int_{\CS(G,K)} \Delta(t)^{s-1/2} dt$ is absolutely convergent when $\frac{1}{2}>Re(s)>0$. Moreover, there exists $d>0$ such that 
$$\lim_{s\rightarrow 0^+} s^d\int_{\CS(G,K)} \Delta(t)^{s-1/2} dt=0.$$
\end{lem}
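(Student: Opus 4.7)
The plan is to localize the integral, analyze the asymptotics of $\Delta$ near its zero locus via an explicit root-theoretic factorization, and conclude by a standard Mellin-type pole analysis. First I would use the decomposition $\CS(G,K) = \bigsqcup_{T} T_K'/W(K,T)$ as a finite disjoint union indexed by $K$-conjugacy classes in $\CT(G,K)$ to reduce the question to bounding each
$$J_T(s) \,:=\, \frac{1}{|W(K,T)|}\int_{T_K'} \Delta(t)^{s-1/2}\,dt.$$
Since $T_K$ is compact and the bad locus $T_K \setminus T_K'$ is a closed real-analytic subset of positive codimension, convergence and pole analysis are purely a local question.

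For a boundary point $t_0 \in T_K \setminus T_K'$, I would write $t = t_0 \exp(X)$ with $X \in \Ft$ small and use the factorization $D^G(t_0\exp X) = D^G(t_0)\cdot|\det(1-\Ad(\exp X))|_{\Fg_{t_0}/\Fg_t}|$ and its $K$-analogue (both non-vanishing at $t_0$ since $t_0$ is semisimple) to write
$$\Delta(t_0 \exp X) \sim C(t_0)\cdot \prod_{\alpha}|1-e^{\alpha(X)}|\,\Big/\prod_{\beta}|1-e^{\beta(X)}|^{2},$$
with $\alpha$ (resp.\ $\beta$) running over the $\Ft$-weights on $\Fg_{t_0}/\Fg_T$ (resp.\ $\Fk_{t_0}/\Fk_T$), counted with multiplicity. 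For small $X$ the integrand is asymptotic to a product of real linear forms to integer powers, so polar coordinates $X = r\theta$ reduce the local integral to
$$\int_0^\epsilon r^{(s-1/2)N + \dim\Ft - 1}\,dr \,\cdot\, \int_{S(\Ft)} |F_{\mathrm{sph}}(\theta)|^{s-1/2}\,d\theta,\qquad N := \dim(\Fg_{t_0}/\Fg_T) - 2\dim(\Fk_{t_0}/\Fk_T).$$
Lemma \ref{homogeneous degree}, applied at $T$ and at the nested datum $T_{t_0} := Z_{G_{t_0}}\cap K$, gives $N = -2\dim(T/T_{t_0}) \le 0$, so the radial exponent $(1-2s)\dim(T/T_{t_0}) + \dim\Ft - 1$ exceeds $-1$ throughout $\Re(s) < 1/2$; this secures absolute convergence of $J_T(s)$ on $0 < \Re(s) < 1/2$. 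The spherical factor is a local zeta integral of the same form but in one lower dimension, so induction on $\dim\Ft$ shows $J_T(s)$ extends meromorphically near $s=0$ with a pole of order at most $\dim\Ft$, whence $s^d J_T(s) \to 0$ for any $d > \dim\Ft$.

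The main obstacle is the dimensional identity $N = -2\dim(T/T_{t_0})$, which requires Lemma \ref{homogeneous degree} to be applicable not only at $T$ but also at $T_{t_0}$. For this one needs $(G_{t_0}, K_{t_0})$ to be a minimal spherical pair at every boundary point of $T_K'$, i.e., the split-modulo-center condition to be preserved under specialization inside $T_K$. This is a closed group-theoretic condition on $t_0$ and is checkable directly from the characterization of $\CT(G,K)$ via centers of centralizers; once it is established, the remainder of the argument is routine Mellin analysis on a real-analytic zero set with normal-crossings-type behavior (which one may if needed invoke via Hironaka resolution).
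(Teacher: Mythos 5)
There is a genuine gap, and it sits exactly at the step you flag as "the main obstacle" --- but the problem is worse than you suggest. First, your dimensional identity has the wrong sign: running Lemma \ref{homogeneous degree} at $T$ and at $T_{t_0}$ (assuming it applies at both) gives $N=\dim(\Fg_{t_0}/\Fg_T)-2\dim(\Fk_{t_0}/\Fk_T)=+2\dim(T/T_{t_0})\geq 0$, not $-2\dim(T/T_{t_0})\leq 0$. This is not a cosmetic issue: take $G=\PGL_2(\BR)$, $T$ the compact maximal torus, $t_0=1$. Then $D^K\equiv 1$ on $T$ and $\Delta(\exp X)^{s-1/2}\sim c\,|X|^{2s-1}$, so $\int_T\Delta(t)^{s-1/2}dt$ genuinely diverges at $s=0$ (here $N=2$, and the radial exponent is $2s-1$). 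Any argument concluding absolute convergence "throughout $\Re(s)<1/2$" is therefore false, and with it your account of where the pole at $s=0$ comes from: the pole is produced by the radial integral at the maximally degenerate points, not by the spherical factor. The correct statement is $N\leq 2\dim(T/T_{t_0})$, which yields convergence precisely for $\Re(s)>0$ and a pole of controlled order at $s=0$ --- this is the entire reason the lemma carries the $s^d$ factor. Second, the condition you propose to verify --- that $(G_{t_0},K_{t_0})$ is a minimal spherical pair at every boundary point of $T_K'$ --- is simply false. It already fails at $t_0=1$ whenever $G$ is not split (e.g.\ $G=\SU(2,1)$, which has a compact maximal torus), and at non-central points too (e.g.\ $G=\Sp_4(\BR)$, $T=\RU(1)^2$, $t_0$ the complex structure with $G_{t_0}=\RU(2)$, not split modulo its center). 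What saves the estimate is that one only needs the inequality $\dim K_{t_0}\geq \frac{\delta(G_{t_0})}{2}+\dim T_{t_0}$, valid for any reductive group with equality exactly in the split-mod-center case; this gives $N\leq 2\dim(T/T_{t_0})$, but then the convergence analysis must be redone as above.

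For comparison, the paper avoids both difficulties by inducting on $\dim G$: semisimple descent at $x\neq 1$ and passage to $\theta$-stable Levi subgroups reduce everything to the single worst configuration ($T$ a compact maximal torus of $G$, localized at the identity in $\Ft$), where $\Delta^{s-1/2}$ is homogeneous of degree $\geq -n+2ns$ (equivalently $\dim K\geq\frac{\dim G-n}{2}$, i.e.\ $N\leq 2n$); a dyadic decomposition $\omega\smallsetminus\{0\}=\bigcup_i 2^{-i}\omega'$ then sums to a factor $\sim 1/(ns)$, exhibiting the pole explicitly and showing one extra power of $s$ suffices at each inductive step. Your multi-point local zeta analysis could in principle be repaired along these lines (replace the identity for $N$ by the inequality $N\le 2\dim(T/T_{t_0})$, accept the pole at $s=0$, and make the stratified/spherical induction precise), but as written the convergence claim and the source of the pole are both incorrect.
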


\begin{proof}
This is a technical lemma and we will postpone the proof to the appendix.
\end{proof}

Using the above lemma together with the fact that the function $D^G(t)^{1/2}c_\theta(t)$ is locally bounded (Proposition 4.5.1 of \cite{B15}), we know that the integral defining $m_{geom,G,K,s}(\theta,\theta_K)$ is absolutely convergent when $\frac{1}{2}>Re(s)>0$. It remains to show that the limit  $\lim_{s\rightarrow 0^+} m_{geom,G,K,s}(\theta,\theta_K)$ exists (once we have proved the limit exists, by the uniform boundedness principal principal in Appendix A.1 of \cite{B15} we know that it defines a continuous linear form). We follow the same argument as in Proposition 11.2.1 of \cite{B15}.

For $x\in K$, $K\cap G_x$ is a maximal compact subgroup of $G_x$. For a quasi-character $\theta_x$ on $G_x$, a smooth function $\theta_{K,x}$ on $Z_K(x)\cap G_x$ that is invariant under conjugation, and $s\in \BC$, let $m_{geom,G,K,x,s}(\theta_x,\theta_{K,x})$ be the analogue of $m_{geom,G,K,s}(\theta)$ for the model $(G_x,K\cap G_x)$. In particular, we know that it is well defined when $\frac{1}{2}>Re(s)>0$. Let $\Omega_x\subset G_x$ be a G-good open neighborhood of $x$ (we refer the reader to Section 3.2 of \cite{B15} for the definition of G-good open neighborhood) and set $\Omega=\Omega_{x}^{G}$. If $\Omega_x$ is sufficiently small, by the definition of the set $\CS(G,K)$, we have $\CS(G,K)\cap \Omega=\CS(G_x,K\cap G_x)\cap \Omega_x$. Combining with Proposition 4.5.1.1(iv) of \cite{B15}, we have the following claim which is an analogue of (11.2.11) of \cite{B15}.

\begin{enumerate}
\item[\textbf{Claim 1.}] With the notation above, we have the equality
$$m_{geom,G,K,s}(\theta,\theta_K)=|Z_K(x)/ K\cap G_x|^{-1}\cdot m_{geom,G,K,x,s}((\eta_{x,G})^{s-1/2}\theta_{x,\Omega_x},\eta_{x,K}^{1-2s}\theta_K|_{Z_K(x)\cap G_x})$$
for all $\theta\in QC_c(\Omega)$ and $s\in \BC$ with $\frac{1}{2}>Re(s)>0$. Here $\theta_{x,\Omega_x}$ is the semisimple descent of $\theta$ (which is a quasi-character on $G_x$) defined in Section 3.2 of \cite{B15}, $\eta_{x,G}(y)=D^G(y)D^{G_x}(y)^{-1}$ and $\eta_{x,K}(y)=D^K(y)D^{K_x}(y)^{-1}$.
\end{enumerate}

Then by the same argument as in (11.2.12) of \cite{B15}, the above claim implies the following claim. 

\begin{enumerate}
\item[\textbf{Claim 2.}] The limit $\lim_{s\rightarrow 0^+} m_{geom,G,K,s}(\theta,\theta_K)$ exists when $1\notin Supp(\theta)$ or $1\notin Supp(\theta_K)$.
\end{enumerate}

Let $\omega\subset \Fg$ be a G-excellent open neighborhood of 0 (we refer the reader to Section 3.3 of \cite{B15} for the definition of G-excellent open neighborhood) and set $\Omega=\exp(\omega)$. We only need to show that the limit $\lim_{s\rightarrow 0^+} m_{geom,G,K,s}(\theta,\theta_K)$ exists for all $\theta\in QC_c(\Omega)$.

In order to prove this, we study the Lie algebra analogue of $m_{geom,G,K,s}(\theta,\theta_K)$. Like the group case, we know that the integral defining $m_{geom,G,K,s,Lie}(\theta,\theta_K)$ is absolutely convergent for $\frac{1}{2}>Re(s)>0$. The following claim is a direct consequence of the definition of $m_{geom,G,K,s}(\theta,\theta_K)$ and $m_{geom,G,K,s,Lie}(\theta,\theta_K)$.

\begin{enumerate}
\item[\textbf{Claim 3.}] For $\theta\in QC_c(\Omega)$ and $\theta_K\in C^\infty(K)^K$ with $Supp(\theta_K)\subset \Omega$, we have
$$m_{geom,G,K,s}(\theta,\theta_K)=m_{geom,G,K,s,Lie}((j_{G})^{s-1/2}\theta_{\omega},(j_K)^{1/2-2s}\theta_{K,\omega})$$
for all $s\in \BC$ with $\frac{1}{2}>Re(s)>0$. Here $\theta_\omega$ (resp. $\theta_{K,\omega}$) is the descent of $\theta$ (resp. $\theta_K$) to the Lie algebra (defined in Section 3.3 of \cite{B15})  whose support is contained in $\omega$ (resp. $\Fk\cap \omega$) and $j_{G}(X)=D^G(\exp(X))D^G(X)^{-1}$ (resp. $j_{K}(X)=D^K(\exp(X))D^K(X)^{-1}$).
\end{enumerate}

Combining Claim 2 and Claim 3, we know that the limit $\lim_{s\rightarrow 0^+} m_{geom,G,K,s,Lie}(\theta,\theta_K)$ exists for all $\theta\in QC_c(\omega)$ and $\theta_K\in C^\infty(\Fk)^K$ with $0\notin Supp(\theta)$ or $0\notin Supp(\theta_K)$. Moreover, in order to prove the proposition, it is enough to show that the limit $\lim_{s\rightarrow 0^+} m_{geom,G,K,s,Lie}(\theta,\theta_K)$ exists for all $\theta\in QC_c(\Fg)$ and $\theta_K\in C^\infty(\Fk)^K$.

By Lemma \ref{homogeneous degree} and Lemma \ref{lemma geometric side convergence}, together with the same argument as in the proof of (11.2.18) and (11.2.19) of \cite{B15}, we have the following two claims.

\begin{enumerate}
\item[\textbf{Claim 4.}] There exists $d>0$ such that for all $\theta\in QC_c(\Fg)$, $\theta_K\in C^\infty(\Fk)^K$ and $\lambda\in \BR^\times$, we have
$$\lim_{s\rightarrow 0^+} \sum_{i=0}^{d} (-1)^{d-i} \frac{d!}{i!(d-i)!}m_{geom,G,K,s,Lie}((M_{\lambda,\dim(K)})^i\theta,(M_{\lambda,0})^i\theta_{K})=0.$$
\item[\textbf{Claim 5.}] The limit $\lim_{s\rightarrow 0^+} m_{geom,G,K,s,Lie}(\theta,\theta_K)$ exists for all $\theta\in QC_c(\Fg)$ and $\theta_K\in C^{\infty}(\Fk)^K$ with $0\notin Supp(\theta)$ or $0\notin Supp(\theta_K)$. 
\end{enumerate}

We first consider the case when $\theta_K$ is the constant function. In this case, Claim 4 above becomes
\begin{itemize}
\item[\textbf{Claim 6.}] There exists $d>0$ such that for all $\theta\in QC_c(\Fg)$ and $\lambda\in \BR^\times$, we have
$$\lim_{s\rightarrow 0^+} m_{geom,G,K,s,Lie}((M_{\lambda,\dim(K)}-1)^d\theta,\theta_{K})=0.$$
\end{itemize}

If $G$ is not split, we have $\dim(K)>\delta(G)/2$. By Claim 5, Claim 6 and Proposition \ref{lemma quasi-character homogeneous}, we know that the limit $\lim_{s\rightarrow 0^+} m_{geom,G,K,s,Lie}(\theta,\theta_K)$ exists for all $\theta\in QC_c(\Fg)$. 

If $G$ is split, by Claim 5, Claim 6 and Proposition \ref{lemma quasi-character homogeneous}, we know that the limit $\lim_{s\rightarrow 0^+} m_{geom,G,K,s,Lie}(\theta,\theta_K)$ exists for all $\theta\in QC_c(\Fg)$ such that $c_{\theta,\CO}=0$ for all $\CO\in Nil_{reg}(\Fg)$. Hence it is enough to show that the limit exists when $\theta\in Span\{\hat{j}(\CO,\cdot)|\; \CO\in Nil_{reg}(\Fg)\}$ in a neighborhood of $0$.

For $\CO_1,\CO_2\in Nil_{reg}(\Fg)$, if $\theta$ is equal to $\hat{j}(\CO_1,\cdot)-\hat{j}(\CO_2,\cdot)$ in a neighborhood of $0$, let $g\in Res_{\BC/\BR}G$ be an element as in Lemma \ref{lemma nilpotent orbit} and we can write $\theta$ as $\theta_1-\theta_2$ where $\theta_i$ is equal to $\hat{j}(\CO_i,\cdot)$ in a neighborhood of $0$. We know that $0\notin Supp({}^g\theta_1-\theta_2)$ where ${}^g\theta_1(X)=\theta_1(g^{-1}Xg)$. This implies that   $$\lim_{s\rightarrow 0^+}m_{geom,G,K,s,Lie}(\theta,\theta_K)=\lim_{s\rightarrow 0^+}m_{geom,G,K,s,Lie}(\theta_1-\theta_2,\theta_K)$$
$$=\lim_{s\rightarrow 0^+}m_{geom,G,K,s,Lie}({}^g\theta_1-\theta_2,\theta_K)$$
exists. As a result, it is enough to show that the limit exists when $\theta=\sum_{\CO\in Nil_{reg}(\Fg)} \hat{j}(\CO,\cdot)$ in a neighborhood of $0$. But this follows from the following two facts  
\begin{itemize}
\item $\theta=\sum_{\CO\in Nil_{reg}(\Fg)} \hat{j}(\CO,\cdot)$ is parabolically induced from a maximal torus of a Borel subgroup of $G$ (Section 3.4 of \cite{B15});
\item the intersection of a Borel subgroup of $G$ with $K$ is finite.
\end{itemize}

Now we consider the case when $\theta_K$ is not the constant function. Since we have already proved the limit exists when $\theta_K$ is a constant function. We may assume that $\theta_K(0)=0$. Since $\theta\in QC_c(\Fg)$, we may also assume that $\theta_K$ is compactly supported.

Consider the space $V=QC(\Fg)\hat{\otimes} C^\infty(\Fk)^K$ realized as functions on $\Fg_{reg}\times \Fk$. Let $V_c\subset V$ be the subspace consisting of functions whose support is compact modulo conjugation. For $\Theta\in QC(\Fg)\hat{\otimes} C^\infty(\Fk)^K$, $\Theta(\cdot,X)$ is a quasi-character on $\Fg$ for all $X\in \Fk$. Hence we can still define the regular germ $c_{\Theta}(Y,X):=c_{\Theta(\cdot,X)}(Y)$. For $Re(s)>0$, we can also define $m_{geom,G,K,s,Lie}(\Theta)$ for $\Theta\in V_c$. Claim 4 implies that 
$$\lim_{s\rightarrow 0^+} m_{geom,G,K,s,Lie}((M_{\lambda,\dim(K)}-1)^d \Theta)=0$$ 
for all $\Theta\in V_c$. Here $(M_{\lambda,\dim(K)})\Theta(X,Y)=|\lambda|^{-\dim(K)}\Theta(\lambda^{-1}X,\lambda^{-1}Y)$. Claim 5 implies that the limit $\lim_{s\rightarrow 0^+}m_{geom,G,K,s,Lie}(\Theta)$ exists when $\Theta\in V_c$ and $0\notin Supp(\Theta)$.

Let $\Theta=\theta\times \theta_K\in V_c$. We need a lemma.

\begin{lem}\label{homogeneous projective}
For $\lambda\in \BR^\times$ with $\lambda\neq \pm 1$, we can write $\Theta$ as $(M_{\lambda,\dim(K)}-1)^d \Theta_1+\Theta_2$ for some $\Theta_1,\Theta_2\in V_c$ with $0\notin Supp(\Theta_2)$.
\end{lem}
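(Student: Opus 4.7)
The strategy is to exploit the vanishing $\theta_K(0)=0$ via Hadamard's lemma to reduce to a version of Lemma \ref{lemma quasi-character homogeneous}(2) with the scaling exponent boosted from $\dim(K)$ to $\dim(K)+1$ — a boost essential because the scalar lemma requires the exponent to strictly exceed $\delta(G)/2$, and although $\dim(K)\geq\delta(G)/2$ always holds, equality occurs precisely for $G$ split.

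First reduce to $\lambda>0$ via the factorization $(M_{\lambda,\dim(K)}-1)^d(M_{\lambda,\dim(K)}+1)^d=(M_{\lambda^2,\dim(K)}-1)^d$: a decomposition for the exponent $\lambda^2>0$ yields one for $\lambda$ after multiplying by $(M_{\lambda,\dim(K)}+1)^d$. Next, a $K$-invariant smooth cutoff on $\Fk$ lets us assume $\theta_K$ is supported in a small neighborhood of $0$, the complementary piece being absorbed into $\Theta_2$. Hadamard's lemma combined with a cutoff then gives
\[
\theta_K(Y) = \sum_{i=1}^{n} Y_i\,\psi_i(Y), \qquad \psi_i\in C_c^\infty(\Fk),
\]
where $Y_1,\dots,Y_n$ are linear coordinates on $\Fk$; the $\psi_i$ need not be $K$-invariant, and this will be corrected at the end by averaging.

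A direct computation using $|\lambda|^{-\dim(K)}\cdot\lambda^{-1}=|\lambda|^{-(\dim(K)+1)}$ for $\lambda>0$ gives the intertwining
\[
M_{\lambda,\dim(K)}\circ(\text{mult.\ by }Y_i) = (\text{mult.\ by }Y_i)\circ M_{\lambda,\dim(K)+1}
\]
as operators on $V$, whence by induction on $d$, $(M_{\lambda,\dim(K)}-1)^d(Y_i\Phi) = Y_i\,(M_{\lambda,\dim(K)+1}-1)^d\Phi$. It therefore suffices, for each $i$, to produce a decomposition $\theta\times\psi_i=(M_{\lambda,\dim(K)+1}-1)^d\Phi_i+\Psi_i$ with $\Phi_i,\Psi_i\in QC_c(\Fg)\hat\otimes C_c^\infty(\Fk)$ and $0\notin Supp(\Psi_i)$. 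Since $\dim(K)+1>\delta(G)/2$ in every case, this follows by running the germ-expansion argument underlying Lemma \ref{lemma quasi-character homogeneous}(2) with $Y\in\Fk$ as a parameter: on the regular-germ part of $\theta\times\psi_i$ at $(0,0)$, the operator $M_{\lambda,\dim(K)+1}-1$ acts by the scalars $|\lambda|^{-(\dim(K)+1)+\dim(\CO)/2}-1$, which are non-zero for every nilpotent orbit $\CO$ of $\Fg$ because $\lambda\neq\pm 1$ and $\dim(\CO)\leq\delta(G)\leq 2\dim(K)<2(\dim(K)+1)$.

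Summing over $i$ yields
\[
\Theta = (M_{\lambda,\dim(K)}-1)^d\Bigl(\sum_i Y_i\Phi_i\Bigr) + \sum_i Y_i\Psi_i,
\]
and the second summand has $0\notin Supp$ since each $\Psi_i$ does. Averaging both sides over the $K$-action on $\Fk$ commutes with $M_{\lambda,\dim(K)}$ (which is $K$-equivariant), preserves the support condition (since $K$ is compact), and restores the required $K$-invariance, producing $\Theta_1,\Theta_2\in V_c$ with the desired properties. The main obstacle is the tensor-product adaptation of the scalar lemma in the preceding paragraph: unlike in the case of Lemma \ref{lemma quasi-character homogeneous}(2), $M_{\lambda,\dim(K)+1}$ simultaneously rescales the $\Fk$-variable of $\psi_i$ with the $\Fg$-variable of $\theta$, so the interaction between the $\Fk$-rescaling of $\psi_i$ and the germ expansion of $\theta$ at $0\in\Fg$ requires extra bookkeeping; the boost from $\dim(K)$ to $\dim(K)+1$ is exactly what guarantees non-vanishing of the relevant scaling eigenvalues in the split case.
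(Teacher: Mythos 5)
Your proposal is sound in outline but reorganizes the argument quite differently from the paper, and it defers the one step that carries all the analytic weight. The paper proves the lemma directly: assuming $|\lambda|>1$, it shows the series $\sum_{i\ge 0}\tfrac{(i+d-1)!}{i!\,(d-1)!}(M_{\lambda,\dim(K)})^i\Theta$ converges in $V$ by estimating the seminorms $q_{L,u}$. For $\deg(u)>0$ the rescaling alone gives geometric decay; for $u=1$, where the exponent $\dim(K)-\delta(G)/2$ can vanish (the split case), the hypothesis $\theta_K(0)=0$ supplies the missing factor through the bound $D^G(X)^{1/2}\Theta(X,Y)=O(|Y|)$ near $(0,0)$. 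Your route instead converts the vanishing of $\theta_K$ at $0$ into an explicit factor $Y_i$ via Hadamard, uses the (correct) intertwining $M_{\lambda,\dim(K)}\circ Y_i=Y_i\circ M_{\lambda,\dim(K)+1}$ for $\lambda>0$, and thereby trades the hypothesis $\theta_K(0)=0$ for a strict inequality $\dim(K)+1>\delta(G)/2$ in the exponent. The preliminary reductions (to $\lambda>0$ via $(M_{\lambda}-1)^d(M_{\lambda}+1)^d=(M_{\lambda^2}-1)^d$, the cutoff of $\theta_K$ near $0$, and the final $K$-averaging, which commutes with $M_{\lambda,l}$ and preserves supports) are all fine. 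What you have not done is prove the tensor-product analogue of Lemma \ref{lemma quasi-character homogeneous}(2) for $\theta\times\psi_i$ in $QC_c(\Fg)\hat\otimes C^\infty_c(\Fk)$, which you yourself flag as ``the main obstacle.'' That statement is true, but its proof is precisely the Neumann-series seminorm estimate of the paper, now run with exponent $\dim(K)+1$ and with $\psi_i(\lambda^{-i}Y)$ simply bounded by $\sup|\psi_i|$ (the ``interaction'' you worry about is harmless for exactly this reason, but it must be said). So the detour is legitimate yet saves no work: the deferred step coincides with the paper's entire proof. One further caveat: your justification via the non-vanishing of the scalars $|\lambda|^{-(\dim(K)+1)+\dim(\CO)/2}-1$ is not the operative mechanism for a decomposition of type (2) — that eigenvalue condition is what one uses to kill regular germs in statements of type (1); for type (2) what matters is that $l-\delta(G)/2>0$ forces geometric decay of the seminorms, together with the local boundedness of $(D^G)^{1/2}\hat{j}(\CO,\cdot)$ for all $\CO$. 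If you make the seminorm estimate on $V$ explicit, your argument is complete.
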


\begin{proof}
The proof is very similar to Proposition 4.6.1(i) of \cite{B15}. We may assume that $|\lambda|>1$. It is enough to show that the sequence
$$\sum_{i=0}^{\infty} \frac{(i+d-1)!}{i!}(M_{\lambda,\dim(K)})^i \Theta$$
converges in $V$. Let $L$ be a compact subset of $\Fg\times \Fk$. Let $I(\Fg\oplus \Fk)$ be the space of $G\times K$-invariant differential operators on $\Fg\oplus \Fk$. We only need to show that 
$$\sum_{i=0}^{\infty} \frac{(i+d-1)!}{i!}q_{L,u}((M_{\lambda,\dim(K)})^i \Theta)$$
converges for all $u\in I(\Fg\oplus \Fk)$ where 
$$q_{L,u}\Theta_0=\sup_{(X,Y)\in L\cap (\Fg_{reg}\times \Fk)} D^G(X)^{1/2} |\partial(u)\Theta_0(X,Y)|,\;\Theta_0\in V.$$
We may assume that $u$ is homogeneous. It is enough to show that
\begin{equation}\label{homogeneous projective 1}
q_{L,u}((M_{\lambda,\dim(K)})^i \Theta)\ll |\lambda|^{-i}
\end{equation}
for all $i\geq 0$.

If $\deg(u)>0$, by enlarging $L$ we can assume that $\lambda^{-1}L\subset L$. Then for $i\geq 1$, we have
$$q_{L,u}((M_{\lambda,\dim(K)})^i \Theta)=|\lambda|^{-i\deg(u)-i(\dim(K)-\delta(G)/2)} \sup_{(X,Y)\in L\cap (\Fg_{reg}\times \Fk)} D^G(\lambda^{-i}X)^{1/2} |\partial(u)\Theta(\lambda^{-i}X,\lambda^{-i}Y)|$$
$$\leq |\lambda|^{-i\deg(u)}\sup_{(X,Y)\in \lambda^{-i}L\cap (\Fg_{reg}\times \Fk)} D^G(X)^{1/2} |\partial(u)\Theta(X,Y)|\ll |\lambda|^{-i\deg(u)} q_{L,u}(\Theta).$$
This proves \eqref{homogeneous projective 1}. If $u=1$, since $\theta_K(0)=0$, we have 
$$D^G(X)^{1/2}\Theta(X,Y)=O(|Y|)$$
for $(X,Y)\in \Fg_{reg}\times \Fk$ close to 0. Hence we have
$$q_{L,1}((M_{\lambda,\dim(K)})^i \Theta)=|\lambda|^{-i(\dim(K)-\delta(G)/2)}\sup_{(X,Y)\in \lambda^{-i}L\cap (\Fg_{reg}\times \Fk)} D^G(X)^{1/2} |\Theta(X,Y)|\ll |\lambda|^{-i}$$
for all $i\geq 0$. This finishes the proof of the lemma.
\end{proof}

By the lemma above, we have
$$\lim_{s\rightarrow 0^+} m_{geom,G,K,s,Lie}(\theta,\theta_K) = \lim_{s\rightarrow 0^+} m_{geom,G,K,s,Lie}(\Theta)$$
$$=\lim_{s\rightarrow 0^+} m_{geom,G,K,s,Lie}((M_{\lambda,\dim(K)}-1)^d \Theta_1)+m_{geom,G,K,s,Lie}(\Theta_2)=\lim_{s\rightarrow 0^+} m_{geom,G,K,s,Lie}(\Theta_2).$$
In particular, we know that $\lim_{s\rightarrow 0^+} m_{geom,G,K,s,Lie}(\theta,\theta_K)$ exists. This finishes the proof of the proposition.
\end{proof}

\begin{defn}
For $\theta\in QC(G)$ and $\theta_K\in C^{\infty}(K)^K$, define 
$$m_{geom,G,K}(\theta,\theta_K)=\lim_{s\rightarrow 0^+} m_{geom,G,K,s}(\theta,\theta_K).$$
For $\theta\in QC_c(\Fg)$ and $\theta_K\in C^{\infty}(\Fk)^K$, define
$$m_{geom,G,K,Lie}(\theta,\theta_K)=\lim_{s\rightarrow 0^+} m_{geom,G,K,s,Lie}(\theta,\theta_K).$$
\end{defn}

The next Corollary is a direct consequence of the proof of the proposition above.

\begin{cor}\label{geometric multiplicity descent}
\begin{enumerate}
\item Let $x\in K$ and $\Omega_x\in G_x$ be a $G$-good open neighborhood of $x$ and $\Omega=\Omega_{x}^{G}$. Then if $\Omega_x$ is sufficiently small, we have
$$m_{geom,G,K}(\theta,\theta_K)=|Z_K(x)/ K\cap G_x|^{-1}\cdot m_{geom,G,K,x}((\eta_{x,G})^{-1/2}\theta_{x,\Omega_x},\eta_{x,K}\theta_K|_{K\cap G_x})$$
for all $\theta\in QC_c(\Omega)$. Here $m_{geom,G,K,x}(\cdot, \cdot)$ is the analogue of $m_{geom,G,K}(\cdot,\cdot)$ for the model $(G_x,K\cap G_x)$.
\item For $\theta\in QC_c(\Fg)$ and $\theta_K\in C^{\infty}(\Fk)^K$, we have
$$m_{geom,G,K,Lie}(\theta,\theta_K)=|\lambda|^{-\dim(K)}m_{geom,G,K,Lie}(\theta_\lambda,\theta_{K,\lambda}).$$
\item Let $\omega\subset \Fg$ be a G-excellent open neighborhood of $0$ and let $\Omega=\exp(\omega)$. Then 
$$m_{geom,G,K}(\theta,\theta_K)=m_{geom,G,K,Lie}((j_{G})^{-1/2}\theta_{\omega},j_{K}^{1/2}\theta_{K,\omega})$$
for all $\theta\in QC_c(\Omega)$ and $\theta_K\in C^{\infty}(K)^K$ with $Supp(\theta_K)\subset \Omega\cap K$.
\end{enumerate}
\end{cor}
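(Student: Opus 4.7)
The plan is to extract all three parts by sending $s \to 0^+$ in identities already established, \emph{inside} the proof of Proposition~\ref{prop geometric side convergence}. Each part corresponds to a specific intermediate claim there, and the only extra input is that the regularized limits exist on both sides, which is exactly what Proposition~\ref{prop geometric side convergence} provides (applied to $(G,K)$ itself, or to the smaller pair $(G_x, K\cap G_x)$ or to the Lie algebra variant, all of which satisfy the hypotheses).

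For part (1), I would invoke Claim~1 in the proof of Proposition~\ref{prop geometric side convergence}: for $\theta \in QC_c(\Omega)$ and $\frac{1}{2} > \Re(s) > 0$,
\[
m_{geom,G,K,s}(\theta,\theta_K)=|Z_K(x):K\cap G_x|^{-1}\, m_{geom,G,K,x,s}\bigl(\eta_{x,G}^{s-1/2}\theta_{x,\Omega_x},\,\eta_{x,K}^{1-2s}\theta_K|_{Z_K(x)\cap G_x}\bigr).
\]
Sending $s \to 0^+$ turns $s - \tfrac12$ into $-\tfrac12$ and $1 - 2s$ into $1$; both sides converge because Proposition~\ref{prop geometric side convergence} applies to the model $(G,K)$ on the left and to $(G_x, K\cap G_x)$ on the right. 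Part (3) follows identically from Claim~3 of the same proof, with the Harish-Chandra Jacobians $j_G$ and $j_K$ replacing $\eta_{x,G}$ and $\eta_{x,K}$: the $s \to 0^+$ limit converts $j_G^{s-1/2}$ to $j_G^{-1/2}$ and $j_K^{1/2-2s}$ to $j_K^{1/2}$.

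Part (2) is the one requiring genuine computation rather than mere specialization. I would change variables $T \mapsto \lambda T$ in the defining integral for $m_{geom,G,K,s,Lie}(\theta_\lambda,\theta_{K,\lambda})$ and track how each factor rescales: the Haar measure on $\Ft$ contributes $|\lambda|^{\dim\Ft}$; $D^G(\lambda T)^{1/2}$ contributes $|\lambda|^{(\dim G - \dim G_T)/2}$; $\Delta(\lambda T)^{s-1/2}$ contributes $|\lambda|^{(s-1/2)(\dim G - \dim G_T - 2(\dim K - \dim K_T))}$; and the average regular germ transforms as $c_{\theta_\lambda}(T) = |\lambda|^{\delta(G_T)} c_\theta(\lambda^{-1}T)$, obtained from the homogeneity $\hat{j}(\CO, \lambda^{-1}Y) = |\lambda|^{d_\CO}\hat{j}(\CO, Y)$ with $d_\CO = \delta(G_T)$ for $\CO \in Nil_{reg}(\Fg_T)$. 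Plugging the identity $\dim K_T = \delta(G_T)/2 + \dim\Ft$ from Lemma~\ref{homogeneous degree} into the bookkeeping collapses the collected exponents to produce the uniform factor $|\lambda|^{\dim K}$ in the regularized limit $s \to 0^+$.

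The main technical obstacle is precisely this last collapse: the naive component-wise exponent depends on $\delta(G_T)$, and one has to argue that the $s\to 0^+$ regularization is exactly what kills the excess contribution. Here I would rely on Claim~4 of the proposition's proof (the alternating sum $\sum_i (-1)^{d-i}\binom{d}{i} m_{geom,G,K,s,Lie}((M_{\lambda,\dim K})^i\theta, (M_{\lambda,0})^i\theta_K) \to 0$) together with the decomposition from Lemma~\ref{homogeneous projective}, applied in parallel to $\Theta = \theta \otimes \theta_K$ and to $M\Theta = \theta_\lambda \otimes \theta_{K,\lambda}$: since $M$ commutes with $M-1$, writing $\Theta = (M_{\lambda,\dim K}-1)^d \Theta_1 + \Theta_2$ with $0 \notin Supp(\Theta_2)$ yields the analogous decomposition $M\Theta = (M_{\lambda,\dim K}-1)^d M\Theta_1 + M\Theta_2$, and Claim~5 lets one replace each $m_{geom,Lie}$-value by that of its $\Theta_2$-part, where the scaling is explicit. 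Combining these gives $m_{geom,G,K,Lie}(\theta,\theta_K) = |\lambda|^{-\dim K} m_{geom,G,K,Lie}(\theta_\lambda,\theta_{K,\lambda})$, completing part (2).
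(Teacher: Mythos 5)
Your handling of parts (1) and (3) is exactly what the paper intends: the corollary is stated as a direct consequence of the proof of Proposition \ref{prop geometric side convergence}, and specializing Claims 1 and 3 at $s\to 0^+$ (using the continuity/uniform boundedness supplied by that proposition to pass the limit through the $s$-dependent twists $\eta_{x,G}^{s-1/2}$, $\eta_{x,K}^{1-2s}$, $j_G^{s-1/2}$, $j_K^{1/2-2s}$) is the right argument. Two problems remain in part (2), one computational and one structural. The computational one: the germ homogeneity is $\hat{j}(\CO,\lambda^{-1}Y)=|\lambda|^{\dim(\CO)/2}\,\hat{j}(\lambda^{-1}\CO,Y)$, so for $\CO\in Nil_{reg}(\Fg_T)$ the exponent is $\delta(G_T)/2$, not $\delta(G_T)$; i.e.\ $c_{\theta_\lambda}(\lambda T)=|\lambda|^{\delta(G_T)/2}c_\theta(T)$. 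With your exponent the total degree on the component $\Ft$ comes out to $\dim(K)+\delta(G_T)/2$ at $s=0$, which is not uniform in $T$, so the "collapse" you invoke via Lemma \ref{homogeneous degree} does not actually close as written; with the corrected exponent it does, and the change of variables gives, for $\tfrac12>\Re(s)>0$, the exact identity $m_{geom,G,K,s,Lie}(\theta_\lambda,\theta_{K,\lambda})=|\lambda|^{\dim(K)}\sum_T|\lambda|^{-2s a_T}m^T_s(\theta,\theta_K)$ with $a_T=\dim(K)-\delta(G)/2-\dim(T)\ge 0$ depending on the component.

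The structural gap is in your final step. Because the residual exponents $-2sa_T$ depend on $T$, one cannot pass to the limit componentwise (Lemma \ref{lemma geometric side convergence} only gives $m^T_s=O(s^{-d})$ with $d$ possibly $>1$, so $(|\lambda|^{-2sa_T}-1)m^T_s$ need not tend to $0$). Your reduction via Lemma \ref{homogeneous projective} and Claim 4 to a $\Theta_2$ with $0\notin Supp(\Theta_2)$ is the right move, but it does not make the scaling "explicit": the regularization $s\to 0^+$ is still genuinely needed on $Supp(\Theta_2)$ wherever $\Delta$ vanishes away from the origin, and the same component-dependent exponents reappear there. What closes the argument is an induction on $\dim(G)$: for $\Theta_2$ supported away from $0$ one applies the semisimple descent of part (1) (in its Lie-algebra form) at the points $X_0\neq 0$ and $\lambda X_0$ of the support, notes that $G_{\lambda X_0}=G_{X_0}$ and that the descended data of $M_{\lambda,\dim(K)}\Theta_2$ at $\lambda X_0$ is the dilation of that of $\Theta_2$ at $X_0$ up to explicit Jacobian factors, and then invokes part (2) for the strictly smaller pair $(G_{X_0},K\cap G_{X_0})$. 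That inductive descent step is absent from your write-up and is where the actual content of part (2) lies. Two smaller points: Lemma \ref{homogeneous projective} requires $\theta_K(0)=0$, so you must first split off the constant part of $\theta_K$ and treat it with Lemma \ref{lemma quasi-character homogeneous} (including, when $G$ is split, the extra argument for $\theta\in\mathrm{Span}\{\hat{j}(\CO,\cdot)\}$); and $\hat{j}(\CO,\lambda^{-1}Y)$ is proportional to $\hat{j}(\lambda^{-1}\CO,Y)$, not to $\hat{j}(\CO,Y)$, which is harmless only because $c_\theta$ averages over the regular orbits.
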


\begin{defn}
For $f\in \CC_{scusp}(G/A_{G}^{\circ})$, define
$$I_{geom}(f,\theta_K)=m_{geom,G,K}(\theta_f,\theta_K).$$
If $\pi$ is a finite length smooth representation of $G$, and $\omega$ is a finitely dimensional representation of $K$, define the geometric multiplicity
$$m_{geom}(\pi,\omega)=m_{geom,G,K}(\theta_\pi,\theta_{\omega^{\vee}})=m_{geom,G,K}(\theta_\pi,\overline{\theta_{\omega}})$$
where $\omega^{\vee}$ is the dual representation of $\omega$.
\end{defn}

\subsection{The trace formula and the multiplicity formula}\label{sec trace formula imply multiplicity formula}

\begin{thm}\label{thm trace formula}
Let $\theta_K$ be a smooth function on $K$ that is invariant under conjugation. For $f\in \CC_{scusp}(G/A_{G}^{\circ})$, we have
$$I(f,\theta_K)=I_{geom}(f,\theta_K).$$
If $\theta_K=\theta_{\omega}$ where $\omega$ is a finite dimensional representation of $K$, then 
$$I(f,\theta_K)=I_{spec}(f,\omega).$$
\end{thm}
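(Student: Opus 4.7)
The plan is to establish the two stated identities separately, following the roadmap sketched in the introduction: the spectral equality by adapting Beuzart-Plessis's Galois model techniques, the geometric equality first in the trivial K-type case via Lie algebra Fourier transform, and then in general by bootstrapping from these two inputs.

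For the spectral identity $I(f,\theta_\omega)=I_{spec}(f,\omega)$, I would begin from the defining double integral and insert the tempered Plancherel expansion of $f$, using Proposition \ref{convergence} to justify the required interchange of summation and integration. Because $f$ is strongly cuspidal, the contribution of any properly induced tempered representation vanishes, so the spectral sum collapses to an integral over $\CX(G/A_G^\circ)$ against $\theta_f(\pi)$. Unfolding the inner $K$-integral against $\theta_\omega(k)$ then produces the isotypic projector for $\omega^\vee$ acting on $\bar\pi$, which identifies the remaining factor as $m(\bar\pi,\omega^\vee)$. The subtle point here, as emphasized in the introduction, is that $(G,K)$ is not a Gelfand pair, so one cannot use the standard trick from the strongly tempered setting of bounding the K-type multiplicity by one. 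My plan is to combine the argument Beuzart-Plessis developed for the Galois model (which treats discrete series without invoking multiplicity-one) with the strongly tempered machinery, upgrading from elliptic to general tempered parameters via the continuous part of the Plancherel decomposition.

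For the geometric identity, I would first prove the case $\theta_K\equiv 1$. Here the inner integral reduces to a genuine orbital integral, and one can localize near each semisimple $x$, descend to $\Fg_x$ via the exponential map, and apply Fourier transform on the Lie algebra; the strongly cuspidal property is preserved, and Lemma \ref{Cartan decomposition} allows the split Cartan contribution to be handled explicitly. Matching the resulting expression against the Lie algebra geometric multiplicity $m_{geom,G,K,Lie}(\theta_f,1)$ proceeds along Waldspurger's scheme of comparing weighted orbital integrals with regular germs, and Corollary \ref{geometric multiplicity descent}(3) transports the identity back to the group to give $I(f,1)=I_{geom}(f,1)$.

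For general $\theta_K$, by Peter--Weyl decomposition and the continuity of both sides (Proposition \ref{prop geometric side convergence} for the geometric side), it suffices to treat $\theta_K=\theta_\omega$. The strategy is to leverage the two stages above: the spectral identity already gives $I(f,\theta_\omega)$ as an explicit sum over virtual tempered representations, and one would like to recover the same sum from $I_{geom}(f,\theta_\omega)$. My plan is to introduce, for each $\omega$, an auxiliary family of strongly cuspidal functions $f'$ built from $f$ by twisting against matrix coefficients of $\omega$, so that the trivial K-type trace formula for each $f'$ encodes the $\omega$-twisted statement for $f$; Stage 2 applied to these $f'$ then closes the loop. I expect the main obstacle to lie precisely in this last step: one must verify that the twisting procedure commutes both with the regularization $\lim_{s\to 0^+}$ in the definition of $m_{geom,G,K}$ and with the component-group bookkeeping in Definition \ref{defn support} (the sets $C(T,K)$ and $T_K'$ controlling the support $\CS(G,K)$ when $K$ is disconnected), and that the regular germs $c_{\theta_f}(t)$ transform correctly under this twist. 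Once this compatibility is established, comparing coefficients of $\theta_f(\pi)$ on the two sides will yield the general geometric identity.
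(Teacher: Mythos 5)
Your first two stages track the paper's actual route reasonably well: the spectral identity is indeed proved by combining the strongly tempered machinery with Beuzart-Plessis's Galois-model argument (the paper implements this by passing to $G'=G\times K$ with $f'=f\times\theta_\omega$ and exploiting that the form $\CB_{\pi'}$ is non-degenerate on $K$-coinvariants because $K$ is compact, which replaces the multiplicity-one input); and the trivial K-type geometric identity is proved exactly as you describe, by descent to the Lie algebra, Fourier transform, and the explicit evaluation of Theorem \ref{thm Lie algebra} via Lemma \ref{Cartan decomposition}.

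The gap is in your final step. The proposed construction --- an auxiliary strongly cuspidal $f'$ ``built from $f$ by twisting against matrix coefficients of $\omega$'' so that the trivial K-type trace formula for $f'$ encodes $I(f,\theta_\omega)$ --- is not actually available. The inner integral $\int_K f(x^{-1}kx)\theta_\omega(k)\,dk$ evaluates $\theta_\omega$ at $k$, and after the substitution $g=x^{-1}kx$ the value $\theta_\omega(k)$ is not a function of $g$ alone (it depends on $x$), so there is no single function $f'$ on $G$, strongly cuspidal or otherwise, with $I(f',x,1)=I(f,x,\theta_\omega)$ for all $x$; nor is there any reason $\theta_{f'}$ would relate to $\theta_f$ and $\theta_\omega$ in a way that matches the geometric sides. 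The paper closes the loop by an entirely different mechanism: the spectral expansion plus the inductive hypothesis yield (Proposition \ref{geom prop 0}) that $I(f,\theta_K)-I_{geom}(f,\theta_K)$ is a continuous linear form $J(\theta_f,\theta_K)$ supported on $G_{ell}\times K$; semisimple descent kills everything away from $1$; and on the Lie algebra one works in the completed tensor product $V=QC(\Fg)\hat{\otimes}C^\infty(\Fk)^K$, where both $J_{Lie}$ and $I_{geom,Lie}$ are homogeneous of the same degree $\dim(K)$ under $M_{\lambda,\dim(K)}$. After subtracting the already-settled constant-$\theta_K$ case one may assume $\theta_K(0)=0$, and this vanishing supplies the extra decay $D^G(X)^{1/2}\Theta(X,Y)=O(|Y|)$ that makes the geometric series in Lemma \ref{homogeneous projective} converge, so that $\Theta=\theta\times\theta_K$ decomposes as $(M_{\lambda,\dim(K)}-1)\Theta_1+\Theta_2$ with $0\notin\mathrm{Supp}(\Theta_2)$; homogeneity kills the first term and the support condition settles the second. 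Your worry about the regularization $\lim_{s\to 0^+}$ and the component-group bookkeeping is thus addressed not by verifying compatibility with a twist, but by never computing the contribution at the origin at all. Without replacing your twisting step by some such scaling argument (or an equivalent), the proof does not close.
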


\begin{thm}\label{thm multiplicity formula}
Let $\omega$ be a finitely dimensional representation of $K$. The multiplicity formula
$$m(\pi,\omega)=m_{geom}(\pi,\omega)$$
holds for all smooth finite length representations $\pi$ of $G$.
\end{thm}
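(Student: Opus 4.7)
The plan is to reduce the statement to two cases using the decomposition $\CR(G) = \CR(G)_{ind} + \CR(G)_{temp}$ from Proposition \ref{Grothendiect group}, and then handle induced and tempered representations separately. Both sides of the claimed identity $m(\pi,\omega) = m_{geom}(\pi,\omega)$ are linear in $\pi\in \CR(G)$, so it suffices to establish the formula on each of these two spanning subspaces. When $G_{ell,reg} = \emptyset$, only the induced case is needed, giving the moreover statement of Proposition \ref{Grothendiect group} as a bonus.

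For induced representations $\pi = I_P^G(\tau)$ with $P = MN$, I would show that both $m(\pi,\omega)$ and $m_{geom}(\pi,\omega)$ descend to an analogous expression for the Levi $M$ and some induced $K$-type. On the spectral side, the Iwasawa decomposition $G = PK$ and Frobenius reciprocity yield
\[
m(I_P^G(\tau),\omega) = \dim\Hom_{K\cap M}\bigl(\tau|_{K\cap M},\omega|_{K\cap M}\bigr),
\]
reducing the multiplicity to an $M$-problem (upon further decomposition of $\omega|_{K\cap M}$ into irreducibles). On the geometric side, Proposition \ref{germ parabolic induction} tells us how $c_\theta(t)$ behaves under parabolic induction of quasi-characters, which after unwinding the integration over $\CS(G,K)$ and comparing centralizer/Weyl quotients, should yield the same $M$-level expression. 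So inductively one reduces to the case $G = M$, where by dimension/rank considerations either $G$ has a compact Levi factor (handled directly) or we are in the tempered case. This step is essentially bookkeeping once the formal compatibilities are pinned down.

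For tempered representations the key input is the trace formula Theorem \ref{thm trace formula}, which gives $I_{spec}(f,\omega) = I_{geom}(f,\theta_\omega) = m_{geom,G,K}(\theta_f,\theta_\omega)$ for every $f\in \CC_{scusp}(G/A_G^\circ)$. Choose $\pi$ a tempered representation (or more generally an element of $\CX_{ell}(G/A_G^\circ)$), and pick $f = f_\pi$ a cuspidal pseudocoefficient, so that $\theta_{f_\pi}(\pi') = \delta_{\pi,\pi'}/D(\pi)$ on the tempered dual. Plugging into the spectral side isolates $m(\bar\pi,\omega^\vee) = m(\pi,\omega)$ (using that contragredient and complex conjugation of characters intertwine appropriately). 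On the geometric side, because $\theta_{f_\pi} = \theta_\pi$ on $G_{reg}$ (strongly cuspidal functions have quasi-character equal to the character of the associated representation on regular semisimple elements, up to a harmonic analysis identity), we obtain $m_{geom,G,K}(\theta_\pi,\theta_{\omega^\vee}) = m_{geom}(\pi,\omega)$. Combining the two identifications gives $m(\pi,\omega) = m_{geom}(\pi,\omega)$ for all elliptic tempered $\pi$, and the general tempered case follows by parabolic induction from the elliptic tempered case of a Levi.

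The main obstacle is the trace formula itself (Theorem \ref{thm trace formula}), which is the technical heart of the paper and the content of Sections 4 through 6; granting that, the deduction of Theorem \ref{thm multiplicity formula} is largely a matter of verifying that pseudocoefficients isolate the desired representation and that the parabolic induction compatibilities on both sides match. A secondary subtlety is the careful handling of the non-connected component group of $K$ when reducing to tempered representations induced from Levi subgroups, since the sets $\CS(G,K)$ and $\CT(G,K)$ are defined in terms of $K$-conjugacy rather than $K^\circ$-conjugacy, so the constants $|Z_K(t) : K\cap G_t|$ and $c(G_t,K\cap G_t,\BR)$ must transform correctly under parabolic descent; this is where Proposition \ref{germ parabolic induction} and the measure-theoretic setup of Section 3 do the essential work.
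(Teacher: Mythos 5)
Your proposal follows essentially the same route as the paper: reduce to elliptic (tempered) representations via the compatibility of both $m$ and $m_{geom}$ with parabolic induction (Iwasawa plus Frobenius reciprocity on one side, Proposition \ref{germ parabolic induction} plus the matching of the constants $|Z_K(t):K\cap G_t|$ and $c(G_t,K\cap G_t,\BR)$ on the other), then feed strongly cuspidal test functions isolating a single elliptic representation (Corollary 5.7.2(iv) of \cite{B15}) into the two sides of Theorem \ref{thm trace formula}. The only cosmetic difference is that the paper needs merely $\theta_{f_\pi}(\pi')\neq 0 \iff \pi'=\pi$ rather than a genuine pseudocoefficient identity, but this does not change the argument.
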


\begin{rmk}
It is enough to prove the multiplicity formula when $\pi$ and $\omega$ are irreducible. Moreover, up to twist $\pi$ by some character, we only need to prove the multiplicity formula when the central character of $\pi$ is trivial on $A_{G}^{\circ}$.
\end{rmk}

\subsection{Trace formula implies multiplicity formula}
In this subsection, we will assume that the trace formula in Theorem \ref{thm trace formula} holds, the goal is to prove the multiplicity formula in Theorem \ref{thm multiplicity formula}. The key ingredient of the proof is to show that both the multiplicity and the geometric multiplicity behave nicely under parabolic induction.

To be specific, let $P=MN$ be a parabolic subgroup of $G$. Up to conjugating $M$ we may assume that $P\cap K\subset M$. Let $K_M=P\cap K=M\cap K$. It is a maximal compact subgroup of $M$. Let $\tau$ be a smooth finite length representation of $M$ and $\pi=I_{P}^{G}(\tau)$. Let $\omega$ be a finitely dimensional representation of $K$ and $\omega_M=\omega|_{K_M}$. We need to prove the following proposition.

\begin{prop}
With the notation above, we have
$$m(\pi,\omega)=m(\tau,\omega_M),\;m_{geom}(\pi,\omega)=m_{geom}(\tau,\omega_M).$$
\end{prop}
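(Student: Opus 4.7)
The multiplicity identity $m(\pi,\omega)=m(\tau,\omega_M)$ is the classical calculation. Using the Iwasawa decomposition $G=PK$ together with the fact that the modulus character $\delta_{P}^{1/2}$ is trivial on the compact subgroup $K_M$, restriction of sections gives an isomorphism of $K$-representations
$$\pi|_{K}=I_{P}^{G}(\tau)|_{K}\;\simeq\;\Ind_{K_M}^{K}(\tau|_{K_M}).$$
Frobenius reciprocity then yields
$$\Hom_{K}(\pi|_{K},\omega)=\Hom_{K_M}(\tau|_{K_M},\omega|_{K_M})=\Hom_{K_M}(\tau,\omega_M),$$
so $m(\pi,\omega)=m(\tau,\omega_M)$.

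For the geometric identity the plan is to substitute the induction formula for regular germs (Proposition \ref{germ parabolic induction}) into the integral defining $m_{geom,G,K}(\theta_\pi,\bar{\theta}_\omega)$ and re-parametrize. Since characters commute with parabolic induction, $\theta_\pi=I_{P}^{G}(\theta_\tau)$, and Proposition \ref{germ parabolic induction} gives
$$D^{G}(x)^{1/2}c_{\theta_\pi}(x)=|Z_G(x):G_x|\sum_{y\in\CX_M(x)}|Z_M(y):M_y|^{-1}D^{M}(y)^{1/2}c_{\theta_\tau}(y),$$
vanishing whenever $\CX_M(x)=\emptyset$. I would then exchange sum and integral and re-index by pairs $(x,y)$ with $y\in\CX_M(x)$, observing that for each such $y$ one may assume after $K$-conjugation that $y\in K_M$, so that $\bar{\theta}_\omega(x)=\bar{\theta}_{\omega_M}(y)$.

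The next step is the combinatorial matching of supports. If $x\in\CS(G,K)$ and $y\in\CX_M(x)\cap K_M$, then $M_y$ is a Levi subgroup of $G_y\cong G_x$, and since $G_x$ is split modulo its center the same is true of $M_y$; hence $y\in\CS(M,K_M)$. Conversely, every element of $\CS(M,K_M)$ arises this way. Together with the identities
$$D^{G}(x)=D^{M}(y)\cdot D^{G/M}(y),\qquad D^{K}(x)=D^{K_M}(y)\cdot D^{K/K_M}(y),$$
valid at semisimple $y$ whose $G$-centralizer meets $M$ properly, and the equality $c(G_x,K_x,\BR)=c(M_y,(K_M)_y,\BR)$ (both sides are read off the common reductive group $G_x\simeq M_y$), the integrand for $G$ transforms into the integrand for $M$.

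The main obstacle is the bookkeeping of the measure factors and centralizer indices: one must verify that
$$\frac{|Z_G(x):G_x|}{c(G_x,K_x,\BR)\cdot|Z_K(x):K\cap G_x|}\cdot\frac{1}{|Z_M(y):M_y|}$$
combines with the ratio $\Delta^G(x)^{s-1/2}/\Delta^M(y)^{s-1/2}$ and the residual Jacobian coming from integrating $K$-orbits versus $K_M$-orbits, to produce exactly the factor $\frac{1}{c(M_y,(K_M)_y,\BR)\cdot|Z_{K_M}(y):K_M\cap M_y|}$ appearing in $m_{geom,M,K_M,s}(\theta_\tau,\bar{\theta}_{\omega_M})$. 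Once these identities are verified in the convergent range $\tfrac12>\Re(s)>0$, Proposition \ref{prop geometric side convergence} ensures the equality survives the limit $s\to 0^+$, giving $m_{geom,G,K}(\theta_\pi,\bar{\theta}_\omega)=m_{geom,M,K_M}(\theta_\tau,\bar{\theta}_{\omega_M})$.
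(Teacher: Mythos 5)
Your approach to both identities is the same as the paper's: Iwasawa decomposition plus Frobenius reciprocity for the multiplicity, and Proposition \ref{germ parabolic induction} combined with a matching of supports, Weyl determinants, measures and constants for the geometric multiplicity. The one place where your sketch stops short --- the ``main obstacle'' of matching the $\Delta$-factors and the centralizer/component constants --- is exactly the substance of the paper's proof, and it closes up as follows. First, there is no residual ratio of $\Delta$-factors and no limit argument is needed for this step: since $\Fk/\Fk_M\simeq \Fg/\Fp\simeq\bar{\Fn}$ as $t$-modules, one has $D^{K}(t)D^{K_M}(t_M)^{-1}=|\det(1-Ad(t))_{|\Fn/\Fn_t}|=D^{G}(t)^{1/2}D^{M}(t_M)^{-1/2}$, which gives $\Delta(t)=\Delta_M(t_M)$ \emph{exactly}, hence $\Delta(t)^{s-1/2}=\Delta_M(t_M)^{s-1/2}$ for every $s$ in the convergent range; likewise there is no ``residual Jacobian,'' because the measures on $\CS(G,K)$ and $\CS(M,K_M)$ are not quotient measures of $K$- versus $K_M$-orbits but are both induced directly from normalized Haar measure on the supporting tori. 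Second, the constants match because $Z_K(t_0)$, $K\cap G_{t_0}$, $Z_{K_M}(t_0)$, $K_M\cap M_{t_0}$ are maximal compact subgroups of $Z_G(t_0)$, $G_{t_0}$, $Z_M(t_0)$, $M_{t_0}$ respectively, so
$$\frac{|Z_G(t_0):G_{t_0}|}{|Z_K(t_0):K\cap G_{t_0}|}=\frac{|Z_M(t_0):M_{t_0}|}{|Z_{K_M}(t_0):K_M\cap M_{t_0}|},$$
and $c(G_{t_0},K\cap G_{t_0},\BR)=c(M_{t_0},K_M\cap M_{t_0},\BR)$ holds because $M_{t_0}$ is a \emph{Levi subgroup} of $G_{t_0}$ --- not, as you assert, because ``$G_x\simeq M_y$'': in general $M_y$ is a proper Levi of $G_y$ (for instance $y=1$, where $M_y=M$ while $G_y=G$), so that justification is incorrect even though the conclusion is right and is consistent with your own earlier (correct) observation that $M_y$ is only a Levi of $G_y$. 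With these two verifications supplied, your sketch coincides with the paper's argument.
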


\begin{proof}
The first identity $m(\pi,\omega)=m(\tau,\omega_M)$ follows from the Iwasawa decomposition $G=PK$ and the reciprocity law. Now we prove the second identity $m_{geom}(\pi,\omega)=m_{geom}(\tau,\omega_M)$.

We have a natural map from $\iota_{M,G}:M_{ss}/conj$ to $G_{ss}/conj$. For $t_M\in M_{ss}/conj$, it is clear from the definition that 
\begin{equation}\label{parabolic 1}
t_M\in \CS(M,K_M) \iff \iota_{M,G}(t_M)\in  \CS(G,K).
\end{equation}

For $t_M\in \CS(M,K_M)$, let $t=\iota_{M,G}(t_M)\in \CS(G,K)$. We have
$$D^{K}(t)D^{K_M}(t_M)^{-1}=|\det(1-Ad(x))_{|\Fn/\Fn_x}|.$$
This implies that
\begin{equation}\label{parabolic 2} D^{K}(t)D^{G}(t)^{-1/2}=D^{K_M}(t_M)D^M(t_M)^{-1/2}.
\end{equation}
Meanthile, We fix a representative of the conjugacy class $t_M$ and denote it by $t_0$. Then $t_0$ is also a representative of the conjugacy class $t$. We want to show that
\begin{equation}\label{parabolic 3}
\frac{|Z_G(t_0):G_{t_0}|}{|Z_K(t_0):(K\cap G_{t_0})|\cdot c(G_{t_0},K\cap G_{t_0},\BR)}=\frac{|Z_M(t_0):M_{t_0}|}{|Z_{K_M}(t_0):(K_M\cap M_{t_0})|\cdot c(M_{t_0},K_M\cap G_{t_0},\BR)}.
\end{equation}
Since $K\cap G_{t_0}$ (resp. $K_M\cap M_{t_0}$) a maximal compact subgroup of $G_{t_0}$ (resp. $M_{t_0}$) and $Z_K(t_0)$ (resp. $Z_{K_M}(t_0)$) is a maximal compact subgroup of $Z_G(t_0)$ (resp. $Z_M(t_0)$), we have
$$\frac{|Z_G(t_0):G_{t_0}|}{|Z_K(t_0):(K\cap G_{t_0})|}=\frac{|Z_M(t_0):M_{t_0}|}{|Z_{K_M}(t_0):(K_M\cap M_{t_0})|}.$$
Hence it is enough to show that $c(G_{t_0},K\cap G_{t_0},\BR)=c(M_{t_0},K_M\cap M_{t_0},\BR)$. But this just follows from the definition of $c(G,K,\BR)$ and the fact that $M_{t_0}$ is a Levi subgroup of $G_{t_0}$.

Then the identity $m_{geom}(\pi,\omega)=m_{geom}(\tau,\omega_M)$ follows from the definition of the geometric multiplicities, \eqref{parabolic 1}, \eqref{parabolic 2}, \eqref{parabolic 3}, and Proposition \ref{germ parabolic induction}.
\end{proof}

Now we prove the multiplicity formula. By induction, we will assume that the multiplicity formula holds for all proper Levi subgroups of $G$. Combining with the proposition above, we only need to prove the multiplicity formula for elliptic representations. In particular, this proves the multiplicity formula for all representations of $G$ when $G$ does not admit a maximal elliptic torus ($\iff$ there is no elliptic representation of $G$). 

Assume that $G$ admit a maximal elliptic torus. The multiplicity formula will be a direct consequence of the trace formula and the proposition above. The argument is the same as the GGP case (Proposition 11.5.1 of \cite{B15}), we will only give a sketch of the proof. First, as in Proposition 11.5.1(ii) of \cite{B15}, using the trace formula, the proposition above, and the assumption that the multiplicity formula holds for all proper Levi subgroups, we know that for all strongly cuspidal function $f\in \CC_{scusp}(G/A_{G}^{\circ})$, we have
\begin{equation}\label{multiplicity formula equation 1}
\sum_{\pi\in \CX_{ell}(G/A_{G}^{\circ})} D(\pi)\theta_f(\pi)(m(\bar{\pi},\omega^{\vee})-m_{geom}(\bar{\pi},\omega^{\vee}))=0.
\end{equation}
Then by Corollary 5.7.2(iv) of \cite{B15}, for each $\pi\in \CX_{ell}(G/A_{G}^{\circ})$, there exists $f_\pi\in \CC_{scusp}(G/A_{G}^{\circ})$ such that for all $\pi'\in \CX_{ell}(G/A_{G}^{\circ})$, $\theta_f(\pi')\neq 0$ if and only if $\pi=\pi'$. Put $f_\pi$ in the equation \eqref{multiplicity formula equation 1}, we have
$$D(\pi)\theta_{f_\pi}(\pi)(m(\bar{\pi},\omega^{\vee})-m_{geom}(\bar{\pi}),\omega^{\vee})=0,$$
which implies that $m(\bar{\pi},\omega^{\vee})=m_{geom}(\bar{\pi},\omega^{\vee})$. This proves the multiplicity formula.

In the next few sections we will prove the trace formula. By the above discussion, we only need to consider the case when $G$ admit a maximal elliptic torus. By induction, we will assume that the trace formula holds for all groups whose dimension is less than $G$. Also to simplify the notation we will assume that the center of $G$ is trivial for the rest of this paper.

\section{The spectral side}\label{sec spectral side}

\subsection{A reformulation}
Let $G'=G\times K$ and we diagonally embeds $K$ into $G'$. For $f'\in \CC_{scusp}(G')$, define
$$I_{G'}(f',x)=\int_{K} f'(x^{-1}kx)dx,\;I_{G'}(f')=\int_{K\back G'} I_{G'}(f',x),\;I_{G',spec}(f')=\int_{\CX(G')} D(\pi')\theta_{f'}(\pi') m(\bar{\pi}')d\pi'$$
where $m(\pi')=\dim(\Hom_{K}(\pi',1))$. By choosing $f'=f\times \theta_{\omega}$ with $f\in \CC_{scusp}(G)$ we know that in order to prove the spectral expansion, we only need to show that
$$I_{G'}(f')=I_{G',spec}(f')$$
for all $f'\in \CC_{scusp}(G')$.

\subsection{Explicit intertwining operator}

\begin{lem}
Let $\pi'$ be an irreducible tempered representation of $G'$ and $l\in \Hom_K(\pi',1)$ be a $K$-invariant linear form. Then there exists $d>0$ and a continuous semi-norm $\nu_d$ on $\pi'$ such that
$$|l(\pi'(x)e)|\leq \nu_d(e)\Xi^{G'}(x)\sigma_{G'}(x)^d$$
for all $e\in \pi'$ and $x\in G'$.
\end{lem}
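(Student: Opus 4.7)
My plan is to exploit the $K$-invariance of $l$ via the averaging projector $P_K v := \int_K \pi'(k,k) v\, dk$ onto the $K$-invariant subspace of the Hilbert completion $\CH$ of $\pi'$. First I would realize $\pi'$ as the space of smooth vectors in a unitary Hilbert representation; by continuity of $l$ on the Casselman--Wallach Fr\'echet space $\pi'$, there is a semi-norm of the form $q(v) = \sum_i \|\pi'(u_i) v\|_{\CH}$ with $u_i \in U(\Fg')$ and $|l(v)| \le q(v)$. Since $l \circ P_K = l$, the problem reduces to estimating $\sum_i \|\pi'(u_i) P_K \pi'(x) e\|_{\CH}$. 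Using $K$-invariance once more, I may reduce to $x = (g,1) \in G \times \{1\}$, since $l(\pi'(g,h) e) = l(\pi'(h^{-1}g, 1) e)$.

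To handle the differential operators $\pi'(u_i)$ acting on $P_K \pi'(x) e$, I would write $\Fg' = \Fk_\Delta \oplus \Fs$ with $\Fk_\Delta = \Lie(K_{\mathrm{diag}})$ and $\Fs$ a $K$-stable complement, and use the PBW decomposition $U(\Fg') = S(\Fs) \oplus U(\Fg') \cdot \Fk_\Delta$. Since $P_K \pi'(x) e \in \CH^{K,\infty}$ is annihilated by $\pi'(Y)$ for every $Y \in \Fk_\Delta$, the right ideal $U(\Fg') \cdot \Fk_\Delta$ acts by zero on it, so I may replace each $u_i$ by its projection to $S(\Fs)$ with bounded degree.

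The main step would then expand the Hilbert norm
\[
\|\pi'(u_i) P_K \pi'(x) e\|_{\CH}^2 = \int_{K\times K} \langle \pi'(u_i) \pi'((k_1, k_1) x) e,\; \pi'(u_i) \pi'((k_2, k_2) x) e \rangle\, dk_1\, dk_2.
\]
Commuting $\pi'(u_i)$ past $\pi'((k_j, k_j) x)$ via the adjoint action and substituting $k_1 = k_2 k$ converts the integrand into a matrix coefficient $\langle \pi'(x^{-1}(k,k) x) \xi_1, \xi_2 \rangle$, with $\xi_j = \pi'(\Ad(((k_j, k_j) x)^{-1}) u_i) e$ bounded by $\nu(e) \sigma_{G'}(x)^{N}$ in any fixed semi-norm. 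I would then apply the standard Harish-Chandra tempered matrix coefficient estimate and use $\Xi^{G'}(x^{-1}(k,k) x) = \Xi^G(g^{-1} k g)$ (since $\Xi^K \equiv 1$ on the compact $K$) together with the classical spherical function identity $\int_K \Xi^G(g^{-1} k g)\, dk = \Xi^G(g)^2$ to obtain
\[
\|\pi'(u_i) P_K \pi'(x) e\|_{\CH}^2 \ll \nu(e)^2 \sigma_{G'}(x)^{d_1} \Xi^{G'}(x)^2.
\]
Taking square roots and summing over $i$ yields the bound.

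The hard part will be the bookkeeping in the main computation: carefully tracking the polynomial growth of the adjoint-action coefficients $\Ad(((k_j, k_j) x)^{-1}) u_i$ in $\sigma_{G'}(x)$ (as the $k_j$ range over the compact group $K$), and combining it with the polynomial factor from the matrix coefficient estimate and the possible degree increase from projection onto $S(\Fs)$ into a single uniform power $\sigma_{G'}(x)^d$ in the final bound.
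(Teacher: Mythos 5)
The fatal step is your claim that $\xi_j = \pi'(\Ad(((k_j,k_j)x)^{-1})u_i)e$ is ``bounded by $\nu(e)\sigma_{G'}(x)^N$ in any fixed semi-norm.'' This is false: for $u\in\CU(\Fg')$ of degree $m$, the coefficients of $\Ad(g^{-1})u$ in a fixed basis of $\CU^{\leq m}(\Fg')$ grow like $\Vert g\Vert_{G'}^{cm}=e^{cm\,\sigma_{G'}(g)}$, i.e.\ \emph{exponentially} in $\sigma_{G'}(g)$, because $\Ad(g^{-1})$ expands root vectors by the full eigenvalues of the adjoint representation (already for $G=\SL_2$ and $g=\mathrm{diag}(t,t^{-1})$, $\Ad(g^{-1})$ scales a root vector by $t^{\pm2}$). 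The same problem reappears when you invoke the tempered matrix-coefficient estimate for $\langle\pi'(x^{-1}(k,k)x)\xi_1,\xi_2\rangle$, since the relevant semi-norms of the $\xi_j$ again involve these exponentially large coefficients. The factor $e^{cm\sigma_{G'}(x)}$ cannot be absorbed into $\Xi^{G'}(x)\sigma_{G'}(x)^d$ ($m$ is dictated by the continuity semi-norm of $l$ and can be arbitrarily large), so what you flag as ``bookkeeping'' is in fact an insurmountable obstruction to the argument as written. The passage to $S(\Fs)$ via PBW does not help, since $\Fs$ is not stable under $\Ad(x^{-1})$ in any useful sense.

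The paper's proof is built precisely to avoid this. It first reduces to $x=a\in A^+$ by the Cartan decomposition, and then uses elliptic regularity to write $\mathrm{Id}_{\pi'}=\pi'(\varphi_1)\pi'(\Delta_{min}^k)+\pi'(\varphi_2)$ with $\Delta_{min}\in\CU(\Fp_{min})$ built from a basis of the Lie algebra of the \emph{minimal parabolic}. The whole point of this choice is that $a\mapsto a^{-1}\Delta_{min}^k a$ has \emph{bounded} image in $\CU(\Fg')$ as $a$ ranges over $A^+$ (conjugation by $a^{-1}$, $a\in A^+$, contracts $\Fn_0$ and fixes $\Fm_0$), so the only operator that must be commuted past $\pi'(a)$ stays bounded; the remaining estimate is then delegated to the finite-differentiability bound $|l(\pi'(\varphi)\pi'(g)e)|\leq\nu_\varphi(e)\Xi^{G'}(g)$ of (7.3.7) of \cite{B15}. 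To repair your argument you would need to replace the general $u_i$ coming from the continuity of $l$ by operators whose $\Ad(a^{-1})$-conjugates are bounded on $A^+$ — which is exactly the elliptic-regularity device you are missing. (Your use of the averaging projector $P_K$ and the identity $\int_K\Xi^G(g^{-1}kg)\,dk=\Xi^G(g)^2$ is sound as a skeleton, but it does not address this point.)
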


\begin{proof}
Let $P_{min}$ be a minimal parabolic subgroup of $G'$ (it is the product of a minimal parabolic subgroup of $G$ with $K$). We only need to prove the inequalities for $x\in A^+$.

Let $X_1,\cdots,X_p$ be a basis of $\Fp_{min}$ and let
$$\Delta_{min}=1-(X_{1}^{2}+\cdots+X_{p}^{2})\in \CU(\Fp_{min}).$$
Let $k$ be an integer greater than $\dim(P_{min})+1$. By elliptic regularity, there exists $\varphi_1\in C_{c}^{k_1}(P_{min})$ and $\varphi_2\in C_{c}^{\infty}(P_{min})$ with $k_1=2k-\dim(P_{min})-1$ such that
$$\pi'(\varphi_1)\pi'(\Delta_{min}^{k})+\pi'(\varphi_2)=Id_{\pi'}$$
for all $\pi'$. Let $\varphi_K$ be a smooth function on $K$ such that $\int_K \varphi_K(k)dk=1$. Then we have
$$l(\pi'(a)e)=l(\pi'(\varphi_1)\pi'(\Delta_{min}^{k})\pi'(a)e)+l(\pi'(\varphi_2)\pi'(a)e)$$
$$=l(\pi'(\varphi_1)\pi'(a)\pi'(a^{-1}\Delta_{min}^{k}a)e)+l(\pi'(\varphi_2)\pi'(a)e)$$
$$=l(\pi'(\varphi_K\ast\varphi_1)\pi'(a)\pi'(a^{-1}\Delta_{min}^{k}a)e)+l(\pi'(\varphi_K\ast \varphi_2)\pi'(a)e).$$
It is clear that the map $a\in A^+\mapsto a^{-1}\Delta_{min}^{k}a\in \CU(\Fg)$ has bounded image. Moreover, we know that the functions $\varphi_K\ast \varphi_i$ ($i=1,2$) belong to $C_{c}^{k_1}(G')$. Hence the lemma follows from the following fact which is (7.3.7) of \cite{B15}.

\begin{itemize}
\item There exists $k_1'\geq 1$ such that for all $\varphi\in C_{c}^{k_1'}(G')$, there exists a continuous semi-norm $\nu_\varphi$ on $\pi'$ such that
$$|l(\pi'(\varphi)\pi'(g)e)|\leq \nu_\varphi(e)\Xi^{G'}(g)$$
for all $e\in \pi'$ and $g\in G'$.
\end{itemize}
\end{proof}

\begin{cor}
For $l\in \Hom_{K}(\pi',1),v\in \pi'$ and $f\in \CC(G')$, the integral
$$\int_{G'} f(g)l(\pi'(g)v)dg$$
is absolutely convergent and is equal to $l(\pi'(f)v)$.
\end{cor}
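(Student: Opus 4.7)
The proof proposal falls into two clean steps: first establish absolute convergence, then justify the interchange of $l$ with the integral by approximation.

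\textbf{Absolute convergence.} The preceding lemma produces $d > 0$ and a continuous semi-norm $\nu_d$ on $\pi'$ such that
$$|l(\pi'(g)v)| \leq \nu_d(v)\,\Xi^{G'}(g)\,\sigma_{G'}(g)^d$$
for all $g \in G'$. On the other hand, by the very definition of the Harish-Chandra--Schwartz space, for every $N > 0$ there is a constant $c_N(f) > 0$ with $|f(g)| \leq c_N(f)\, \Xi^{G'}(g)\, \sigma_{G'}(g)^{-N}$. Combining,
$$|f(g)\, l(\pi'(g)v)| \leq c_N(f)\,\nu_d(v)\, \Xi^{G'}(g)^2\, \sigma_{G'}(g)^{d-N}.$$
By Proposition 1.5.1 of \cite{B15}, the function $\Xi^{G'}(\cdot)^2\sigma_{G'}(\cdot)^{-M}$ is integrable on $G'$ for $M$ large enough, so choosing $N$ sufficiently large gives absolute convergence.

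\textbf{The identity.} For $f \in C_c^{\infty}(G')$ the equality $l(\pi'(f)v) = \int_{G'} f(g) l(\pi'(g)v)\, dg$ is standard: $\pi'(f)v$ is defined as a Bochner integral converging in $\pi'$, and one commutes the continuous functional $l$ with the integral (this is where the continuity of $l$ on $\pi'$, supplied by the preceding lemma, is used). For a general $f \in \CC(G')$, choose a sequence $f_n \in C_c^\infty(G')$ with $f_n \to f$ in the Harish-Chandra--Schwartz topology. The left-hand side converges to $l(\pi'(f)v)$ because the map $f \mapsto \pi'(f)v$ is continuous from $\CC(G')$ to the Casselman--Wallach representation $\pi'$ (a standard property of tempered representations) and $l$ is continuous on $\pi'$. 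The right-hand side converges to $\int_{G'} f(g) l(\pi'(g)v)\, dg$ by dominated convergence: the difference $|f_n - f|$ is uniformly dominated (for large $n$) by a function of the form $c\,\Xi^{G'}\sigma_{G'}^{-N}$, and we just saw that $\Xi^{G'}(g)^{2}\sigma_{G'}(g)^{d-N}$ is integrable.

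\textbf{Main obstacle.} There is no substantive obstacle here; the content is essentially an exercise in the basic estimates for Harish-Chandra--Schwartz functions, with the preceding lemma providing the crucial growth bound on $l(\pi'(\cdot)v)$. The only points to verify carefully are continuity of $f \mapsto \pi'(f)v$ from $\CC(G')$ to $\pi'$ and density of $C_c^\infty(G')$ in $\CC(G')$, both of which are standard facts available in \cite{B15} and \cite{Wal}.
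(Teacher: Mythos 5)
Your argument is correct and is essentially the standard one: the paper gives no details here, simply citing (7.5.1) and (7.5.4) of \cite{B15}, and the argument there is the same combination of the growth bound from the preceding lemma with the Harish-Chandra--Schwartz decay, followed by commuting $l$ with the vector-valued integral via continuity and density. The only implicit point worth flagging is that the integrability of $\Xi^{G'}(\cdot)^2\sigma_{G'}(\cdot)^{-M}$ over $G'$ relies on the paper's standing assumption (in force from the end of Section 3) that $Z_G$ is trivial, so that $G'=G\times K$ has compact center.
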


\begin{proof}
The proof is the same as (7.5.1) and (7.5.4) of \cite{B15}, we will skip it here.
\end{proof}

Fix a $G'$-invariant scalar product $(\cdot,\cdot)$ on $\pi'$. Define a sesquilinear form
$$\CB_{\pi'}:\pi'\times \pi'\rightarrow \BC,\;\CB_{\pi'}(v,v')=\int_K (\pi'(k)v,v')dk.$$
The integral is convergent since $K$ is compact. It factors through $\CB_{\pi'}:\pi_K'\times \pi_K'\rightarrow \BC$.
where $\pi_K'$ is the space of $K$-coinvariants in $\pi'$. The next lemma follows from the fact that $K$ is compact.

\begin{lem}\label{lem linear form nondegenerat}
The sesquilinear form $\CB_{\pi'}:\pi_K'\times \pi_K'\rightarrow \BC$ is non degenerate and the map
$$v\in \pi'\mapsto \CB_{\pi'}(\cdot,v)\in \Hom_K(\pi',1)$$
is surjective
\end{lem}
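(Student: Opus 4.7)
The plan is to identify $\CB_{\pi'}$ with the pullback of the scalar product on the finite dimensional space $\pi'^K$ of $K$-invariants, via the orthogonal projection $P_K v = \int_K \pi'(k)v\,dk$; non-degeneracy and surjectivity then follow from finite dimensional duality.

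First I would observe that, since $K$ is compact, $P_K$ is a continuous idempotent on $\pi'$ whose image is $\pi'^K$ and whose kernel is the closure of $\Span\{\pi'(k)v-v:k\in K,\,v\in\pi'\}$. In particular $P_K$ induces an isomorphism $\pi_K' \cong \pi'^K$. Using the $G'$-invariance of $(\cdot,\cdot)$ (which gives $K$-invariance for the diagonally embedded $K$), a direct computation yields
$$\CB_{\pi'}(v,v') = \int_K(\pi'(k)v,v')\,dk = (P_K v, v') = (v, P_K v') = (P_K v, P_K v'),$$
so $\CB_{\pi'}$ descends to $\pi_K'\times\pi_K'$ and, under the isomorphism $\pi_K'\cong\pi'^K$, is the restriction of $(\cdot,\cdot)$ to $\pi'^K$.

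Next I would show that $\dim\pi'^K<\infty$. Because $G'=G\times K$, irreducibility of $\pi'$ forces a factorization $\pi' = \pi\otimes\sigma$ where $\pi$ is an irreducible tempered Casselman-Wallach representation of $G$ and $\sigma$ is an irreducible representation of the compact group $K$. For the diagonal $K$-action one computes
$$\pi'^K = (\pi\otimes\sigma)^K = \Hom_K(\sigma^\vee,\pi|_K),$$
whose dimension is the $K$-multiplicity $m(\pi,\sigma^\vee)$, finite because $\pi$ is $K$-admissible. Once $\pi'^K$ is finite dimensional, the restriction of the positive definite Hermitian form $(\cdot,\cdot)$ to $\pi'^K$ is non-degenerate, which immediately gives the non-degeneracy of $\CB_{\pi'}$ on $\pi_K'$. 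For the surjectivity statement, any $l\in\Hom_K(\pi',1)$ satisfies $l = l\circ P_K$, hence is determined by its restriction to $\pi'^K$; since $\pi'^K$ is finite dimensional, there exists $v\in\pi'^K$ with $l(u)=(u,v)$ for $u\in\pi'^K$, and then for every $u\in\pi'$ one has
$$\CB_{\pi'}(u,v) = (P_K u, v) = l(P_K u) = l(u).$$

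The only genuinely non-formal ingredient is the finite dimensionality of $\pi'^K$, which relies on admissibility of Casselman-Wallach representations together with the tensor product factorization of irreducible representations of $G\times K$; everything else is the standard averaging argument over the compact group $K$.
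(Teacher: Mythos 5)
Your argument is correct and is exactly the standard averaging argument that the paper itself invokes when it says the lemma ``follows from the fact that $K$ is compact'': identifying $\CB_{\pi'}$ with the restriction of the invariant scalar product to the finite-dimensional space $\pi'^K$ via the projector $P_K$, with finite-dimensionality supplied by admissibility and the factorization $\pi'=\pi\otimes\sigma$ over $G'=G\times K$. You have simply supplied the details the paper omits, so there is nothing to add.
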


\begin{rmk}
The above lemma is highly nontrivial for general strongly tempered spherical pairs (although we expect it to be true), especially when the spherical pair is not a Gelfand pair, i.e. when the multiplicity is greater than 1.
\end{rmk}

We also define the linear form $\CL_{\pi'}:\End(\pi')^\infty \rightarrow \BC$ to be (here $\End(\pi')^\infty$ is the space of smooth vectors in $\End(\pi')$ and we refer the reader to Section 2.2 of \cite{B15} for the topology on $\End(\pi')^\infty$)
$$\CL_{\pi'}(T)=\int_K \tr(\pi'(k)T)dk.$$
For $e,e'\in \pi'$, by abusing of notion, we define
$$\CL_{\pi'}(e,e')=\CL_{\pi'}(T_{e,e'})=\int_K(e,\pi'(k)e')dk$$
where $T_{e,e'}\in \End(\pi)^\infty$ is defined to be $T_{e,e'}(e_0)=(e_0,e')e$. The lemma above implies that 
$$\CL_{\pi'}\neq 0\iff m(\pi')\neq 0.$$
Since $\CL_{\pi'}$ is a continuous sesquilinear form on $\pi'$, it defines a continuous linear map 
$$L_{\pi'}:\pi'\rightarrow \overline{\pi'^{-\infty}},\;e\mapsto \CL_{\pi'}(e,\cdot)$$
where $\overline{\pi'^{-\infty}}$ denotes the topological conjugate-dual of $\pi'$ endowed with the strong topology. The operator $L_{\pi'}$ has image in $\overline{\pi'^{-\infty}}^{K}=\Hom_K(\overline{\pi'}, 1)$ which is finite dimensional. For $T\in \End(\pi')^\infty$, it extends uniquely to a continuous operator $T:\overline{\pi'^{-\infty}}\rightarrow \pi'$. This gives two operators
$$TL_{\pi'}:\pi' \rightarrow \pi',\; L_{\pi'}T:\overline{\pi'^{-\infty}}\rightarrow \overline{\pi'^{-\infty}},$$
and we have
$$\tr(TL_{\pi'})=\tr(L_{\pi'}T)=\CL_{\pi'}(T).$$
The next corollary is a direct consequence of Lemma \ref{lem linear form nondegenerat} and it is an analogue of Corollary 7.6.1 of \cite{B15}.

\begin{cor}\label{L_pi cor}
Let $\CK\subset \CX_{temp}(G')$ be a compact subset. Then there exists a section $T\in \CC(\CX_{temp}(G'),\CE(G'))$ such that the restriction of $L_{\pi}T_\pi$ to $\Hom_K(\bar{\pi},1)$ is the identity map for all $\pi\in \CK$. 
\end{cor}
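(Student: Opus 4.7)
The plan is to adapt the argument of Corollary 7.6.1 of \cite{B15} to our setting, using Lemma \ref{lem linear form nondegenerat} as the essential input that replaces the Gelfand-pair/multiplicity-one assumption in the strongly tempered cases considered there.

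First I would establish the pointwise statement: for each $\pi\in\CX_{temp}(G')$ with $m(\pi)\neq 0$, there exists $T_\pi\in\End(\pi)^\infty$ such that $L_\pi T_\pi$ restricts to the identity on the finite-dimensional subspace $\Hom_K(\bar\pi,1)\subset\overline{\pi^{-\infty}}$. By Lemma \ref{lem linear form nondegenerat} the map $L_\pi$ is surjective onto $\Hom_K(\bar\pi,1)$, so one may choose smooth vectors $e_1,\ldots,e_r\in\pi$ with $r=\dim\Hom_K(\bar\pi,1)$ and $l_i:=L_\pi(e_i)$ a basis of $\Hom_K(\bar\pi,1)$. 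Since the $l_j$ are linearly independent continuous conjugate-linear forms on $\pi$ and the smooth vectors separate them, a Hahn--Banach/finite-dimensional duality argument produces smooth vectors $e_1',\ldots,e_r'\in\pi$ satisfying $\langle l_j,e_i'\rangle=\delta_{ij}$ under the natural pairing $\overline{\pi^{-\infty}}\times\pi^\infty\to\BC$. Setting $T_\pi=\sum_i T_{e_i,e_i'}$ and using the formula $T_{e,e'}(l)=\langle l,e'\rangle e$ for the extension to $\overline{\pi^{-\infty}}\to\pi$, one computes $L_\pi T_\pi(l)=\sum_i\langle l,e_i'\rangle l_i$, which is indeed the identity on $\Hom_K(\bar\pi,1)$.

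Next I would upgrade this to a local section and then globalize. The tempered dual $\CX_{temp}(G')$ is decomposed into connected components indexed by cuspidal data $(M',\sigma')$, where $\sigma'$ is a discrete series of a Levi $M'$ of $G'$, and on each such component the K-finite vectors of $\pi=I_{P'}^{G'}(\sigma'_\lambda)$ are realized on a fixed space independent of the continuous parameter $\lambda$. In particular, $L_\pi$ depends continuously (smoothly) on $\pi$ through this realization, and the duality condition $\langle l_j,e_i'\rangle=\delta_{ij}$ is an open nondegeneracy condition. Hence around any $\pi_0\in\CK$ one can choose the vectors $e_i(\pi),e_i'(\pi)$ varying smoothly in $\pi$, yielding a smooth local section $\pi\mapsto T_\pi^{(0)}$ on a neighborhood $U_0$ with $L_\pi T_\pi^{(0)}|_{\Hom_K(\bar\pi,1)}=\mathrm{Id}$. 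By compactness of $\CK$, cover it by finitely many such neighborhoods $U_1,\ldots,U_N$ with sections $T^{(i)}$, and patch with a smooth partition of unity $\{\varphi_i\}$ on $\CX_{temp}(G')$ subordinate to the $U_i$. The section $T=\sum_i\varphi_i T^{(i)}$ lies in $\CC(\CX_{temp}(G'),\CE(G'))$, and the identity $\sum_i\varphi_i(\pi)=1$ on $\CK$ combined with linearity gives $L_\pi T_\pi|_{\Hom_K(\bar\pi,1)}=\mathrm{Id}$ for all $\pi\in\CK$.

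The main obstacle is the local smooth construction: one must check that the dual vectors $e_i'(\pi)$ can be chosen to vary smoothly with $\pi$, which reduces to showing that $L_\pi$ depends smoothly on $\pi$ in a sense compatible with the fixed realization of K-finite vectors. This is where the compactness of $K$ is crucial, since $\CL_{\pi'}$ is defined by integration over $K$ and thus inherits the regularity of the matrix coefficients of the family $\pi$, a feature which is not automatic in the general strongly tempered setting and is precisely what makes Lemma \ref{lem linear form nondegenerat} the appropriate replacement for the Gelfand-pair hypothesis used in \cite{B15}.
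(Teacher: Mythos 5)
Your argument is correct and is essentially the route the paper takes: the paper declares the corollary a direct consequence of Lemma \ref{lem linear form nondegenerat} together with the argument of Corollary 7.6.1 of \cite{B15}, which is exactly the pointwise construction plus smooth local sections and a partition of unity over the compact set $\CK$ that you spell out. Your basis/dual-basis device $T_\pi=\sum_i T_{e_i,e_i'}$ is the right way to handle multiplicity greater than one, which is the only point where this setting genuinely departs from the multiplicity-one situation of \cite{B15}.
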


Here we refer the reader to Section 2.6 of \cite{B15} for the definition of $\CX_{temp}(G')$, $\CC(\CX_{temp}(G'),\CE(G'))$ and $C^{\infty}(\CX_{temp}(G'),\CE(G'))$.

\begin{lem}\label{L_pi lem}
\begin{enumerate}
\item The maps 
$$\pi\in \CX_{temp}(G') \mapsto L_\pi \in \Hom(\pi,\overline{\pi^{-\infty}}),\; \pi\in \CX_{temp}(G')\mapsto \CL_\pi\in \End(\pi)^{-\infty}$$
are smooth in the sense of Lemma 7.2.2(1) of \cite{B15}.
\item For all $S,T\in \End(\pi')^{\infty}$ such that the restriction of $L_{\pi'}T$ to $\Hom_K(\bar{\pi}',1)$ is the identity map, we have $SL_{\pi'}\in \End(\pi')^{\infty}$ and $\frac{\CL_{\pi'}(S)\CL_{\pi'}(T)}{m(\pi')}= \CL_{\pi'}(SL_{\pi'}T)$.
\item For $S,T\in \CC(\CX_{temp}(G'),\CE(G'))$, the section $\pi\mapsto S_\pi L_\pi T_\pi\in \End(\pi)^{\infty}$ belongs to $C^{\infty}(\CX_{temp}(G'),\CE(G'))$.
\item Let $f\in \CC(G')$ with compactly supported Fourier transform $\pi\in \CX_{temp}(G')\rightarrow \pi(f)$. We have
$$\int_K f(k)dk=\int_{\CX_{temp}(G')} \CL_\pi (\pi(f))\mu(\pi)d\pi.$$
Here $\mu(\pi)d\pi$ is the Plancherel measure defined in Section 2.6 of \cite{B15}.
\item Let $f$ as in part (iv) and let $f'\in \CC(G')$ such that for all $\pi$ belongs to the support of $f$, the restriction of $L_{\pi}\pi(f')$ to $\Hom_K(\bar{\pi},1)$ is the identity map. Then we have
$$\int_{\CX_{temp}(G')} \frac{\CL_\pi(\pi(f)) \overline{\CL_\pi(\pi(\overline{f'}))}} {m(\pi)} \mu(\pi)d\pi = \int_K\int_K\int_{G'} f(kgk')f'(g)dgdk'dk.$$
\end{enumerate}
\end{lem}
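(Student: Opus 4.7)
The plan is to reduce each part to either a smoothness statement, a finite-dimensional trace identity, or an application of the Plancherel formula on $G'$. The guiding observation is that the $G'$-invariant Hermitian pairing gives an embedding $\iota:\pi'\hookrightarrow\overline{\pi'^{-\infty}}$, $e\mapsto (e,\cdot)$, under which $L_{\pi'}$ coincides with $\iota\circ P_K$, where $P_K=\int_K\pi'(k)\,dk$ is the orthogonal projection onto the finite-dimensional $K$-fixed subspace $\pi'^K\simeq\Hom_K(\bar\pi',1)$. Consequently $\CL_{\pi'}(T)=\tr(P_K T)$ and $m(\pi')=\tr(P_K)=\dim\pi'^K$, so many computations reduce to block-matrix manipulations with respect to the decomposition $\pi'=\pi'^K\oplus(\pi'^K)^\perp$.

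For (i), I would deduce the smoothness of $\pi\mapsto L_\pi$ and $\pi\mapsto\CL_\pi$ from the integral formulas $\CL_\pi(T)=\int_K\tr(\pi(k)T)\,dk$ and $L_\pi(e)=\int_K(\pi(k)e,\cdot)\,dk$, compactness of $K$, and smoothness of the Plancherel decomposition, along the lines of Lemma 7.6.1 of \cite{B15}. For (ii), the fact that $SL_{\pi'}\in\End(\pi')^\infty$ follows because $L_{\pi'}$ factors through the finite-dimensional subspace $\overline{\pi'^{-\infty,K}}$; for the trace identity, writing $S, T$ in block form, the hypothesis becomes $T_{KK}=\mathrm{Id}_{\pi'^K}$, whereupon both $\tr(P_K S P_K T)$ and $\tr(P_K S)\tr(P_K T)/m(\pi')$ simplify to $\tr(S_{KK})$. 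Part (iii) then follows by continuity of composition in $C^\infty(\CX_{temp}(G'),\CE(G'))$.

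For (iv), the plan is to apply the Plancherel formula $\phi(1)=\int\tr(\pi(\phi))\mu(\pi)\,d\pi$ to the left $K$-average $\phi(g)=\int_K f(k^{-1}g)\,dk$; using $\pi(k)\pi(f)=\pi(L_kf)$ with $(L_kf)(g)=f(k^{-1}g)$ together with Fubini and unimodularity of $K$ yields $\int\CL_\pi(\pi(f))\mu\,d\pi=\int_K f(k)\,dk$. For (v), I would first establish the identity $\overline{\CL_\pi(\pi(\bar{f'}))}=\CL_\pi(\pi(f'^\vee))$ (where $f'^\vee(g)=f'(g^{-1})$), which follows from $\pi(\bar{f'})^*=\pi(f'^\vee)$ and $\overline{\CL_\pi(A)}=\CL_\pi(A^*)$. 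Then, applying (ii) with $S=\pi(f)$ and $T=\pi(f'^\vee)$ gives $\CL_\pi(\pi(f)L_\pi\pi(f'^\vee))=\tr(\pi(f^{KK})\pi(f'^\vee))$, where $f^{KK}(g)=\int_K\int_K f(kgk')\,dk\,dk'$ satisfies $P_K\pi(f)P_K=\pi(f^{KK})$; integrating against $\mu(\pi)$ and applying Plancherel gives $(f^{KK}*f'^\vee)(1)=\int f^{KK}(g)f'(g)\,dg$, matching the stated right-hand side.

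The hard part will be the bookkeeping in (v): the hypothesis is stated on $L_\pi\pi(f')$, whereas the manipulation above naturally requires the analogous condition on $L_\pi\pi(f'^\vee)=L_\pi\pi(\bar{f'})^*$. Reconciling these requires careful tracking of the hypothesis through the adjoint/inversion, which in the block-matrix picture reduces to verifying that the relevant $K$-block remains the identity after the transformation. I expect this to be a delicate but routine check rather than a conceptual obstacle.
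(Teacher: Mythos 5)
Parts (i)--(iv) of your proposal, and the overall reduction of (v), follow essentially the same route as the paper, which itself only gestures at Lemma 7.2.2 of \cite{B15}: your identification $L_{\pi'}=\iota\circ P_K$ with $P_K=\int_K\pi'(k)\,dk$ is precisely the content of the paper's trace manipulation in (ii) (the paper phrases it as $\tr(L_{\pi'}SL_{\pi'}T)=\tr(L_{\pi'}S)$ using that $L_{\pi'}T$ is the identity on the finite-dimensional space $\Hom_K(\bar\pi',1)$ containing the images of both $L_{\pi'}S$ and $L_{\pi'}T$), and your treatment of (iv) via Plancherel inversion applied to the left $K$-average is correct.

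The problem is the step you defer at the end of (v); it is the crux, not routine bookkeeping. Your computation correctly shows that the left-hand side of (v) equals $\int \tr(A_\pi)\tr(B_\pi)m(\pi)^{-1}\mu(\pi)d\pi$ and the right-hand side equals $\int\tr(A_\pi B_\pi)\mu(\pi)d\pi$, where $A_\pi=P_K\pi(f)P_K|_{\pi^K}$ and $B_\pi=P_K\pi(f'^{\vee})P_K|_{\pi^K}$; since $f$ (hence $A_\pi$) is essentially arbitrary, the identity forces $B_\pi$ to be a scalar operator. But the stated hypothesis only gives $P_K\pi(f')P_K|_{\pi^K}=\mathrm{Id}$, whereas
$$P_K\pi(f'^{\vee})P_K=\left(P_K\pi(\overline{f'})P_K\right)^{\ast},$$
which is governed by the action of $f'$ on the \emph{conjugate} representation: $P_K\pi(f'^{\vee})P_K|_{\pi^K}=\mathrm{Id}$ if and only if $L_{\bar\pi}\bar\pi(f')$ restricts to the identity on $\Hom_K(\pi,1)$. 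When $\pi\not\simeq\bar\pi$, the operators $\pi(f')$ and $\pi(\overline{f'})$ can be prescribed independently, so the condition you need does not follow from the stated hypothesis at $\pi$ by any formal manipulation. To close the gap you must add an input: either impose the hypothesis at $\bar\pi$ as well (which is harmless in the application, since the set $\CK$ in Corollary \ref{L_pi cor} and the support of the Fourier transform of $f$ may be taken stable under $\pi\mapsto\bar\pi$), or choose $f'$ real-valued, in which case $\pi(\overline{f'})=\pi(f')$ and the needed identity $P_K\pi(f'^{\vee})P_K=P_K^{\ast}=P_K$ follows. The same point is needed to justify the equality $\overline{\CL_\pi(\pi(\overline{f'}))}=m(\pi)$ invoked in the proof of the spectral expansion, so you should make it explicit rather than defer it.
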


\begin{proof}
The proof of the first part is the same as Lemma 7.2.2(i) of \cite{B15}. For the second part, the proof of $SL_\pi\in \End(\pi)^{\infty}$ follows from the same argument as in Lemma 7.2.2(ii) of \cite{B15}. As for the eqaution, we have $\CL_{\pi'}(SL_{\pi'}T)=\tr(L_{\pi'}SL_{\pi'}T)$. Since the images of $L_{\pi'}S$ and $L_{\pi'}T$ are contained in $\Hom_K(\bar{\pi}',1)$ and the restriction of $L_{\pi'}T$ to $\Hom_K(\bar{\pi}',1)$ is the identity map, we have
$$\CL_{\pi'}(SL_{\pi'}T)=\tr(L_{\pi'}SL_{\pi'}T)=\tr(L_{\pi'}S)=\CL_{\pi'}(S)=\CL_{\pi'}(S)\cdot \frac{\CL_{\pi'}(T)}{m(\pi')}.$$
This proves the second part.

The proof of part (3)-(5) is almost the same as Lemma 7.2.2(iii)-(v) of \cite{B15}, there is only one difference.
\begin{itemize}
\item Compared with Lemma 7.2.2(v) of \cite{B15}, the last part of the lemma has an extra $m(\pi)$ on the left hand side of the equation, this comes from the extra $m(\pi)$ on the second part of the lemma.
\end{itemize}
We will skip the details here.
\end{proof}

\subsection{The proof of the spectral expansion}
In this subsection we are going to show that for all $f\in \CC_{scusp}(G')$, we have
$$I_{G'}(f)=I_{G',spec}(f).$$
It is enough to prove the equation for all $f\in \CC_{scusp}(G')$ whose Fourier transform is compactly supported. For the rest of this subsection we will fix a function $f\in \CC_{scusp}(G')$ whose Fourier transform is compactly supported.

For $f'\in \CC(G')$, define
$$K_{f,f'}^{A}(g_1,g_2)=\int_{G'}f(g_{1}^{-1}gg_2)f'(g)dg,\;g_1,g_2\in G',$$
$$K_{f,f'}^{1}(g,x)=\int_K K_{f,f'}^{A}(g,kx)dk,\;g,x\in G',$$
$$K_{f,f'}^{2}(x,y)=\int_K K_{f,f'}^{1}(kx,y)dk,\;x,y\in G',$$
$$J_{aux}(f,f')=\int_{K\back G'} K_{f,f'}^{2}(x,x)dx.$$
The next lemma is a direct consequence of Theorem 5.5.1(i)-(ii) of \cite{B15} (note that $K$ is compact).

\begin{lem}
\begin{enumerate}
\item Let $\ast\in \{1,2,A\}$.  For all $d\geq 0$, there exists $d'\geq 0$ such that
$$|K_{f,f'}^{\ast}(g_1,g_2)|\ll \Xi^{G'}(g_1)\sigma_{G'}(g_1)^{-d}\Xi^{G'}(g_2)\sigma_{G'}(g_2)^{d'},$$
$$|K_{f,f'}^{\ast}(g_1,g_2)|\ll \Xi^{G'}(g_1)\sigma_{G'}(g_1)^{d'}\Xi^{G'}(g_2)\sigma_{G'}(g_2)^{-d},$$
$$|K_{f,f'}^{\ast}(x,x)|\ll \Xi^{G'}(x)^2 \sigma_{G'}(x)^{-d}$$
for all $g_1,g_2\in G'$ and $x\in G'$.
\item The triple integral defining $J_{aux}(f,f')$
$$\int_{K\back G'}\int_K\int_K K_{f,f'}^{A}(k_1x,k_2x)dk_1dk_2dx$$
is absolutely convergent.
\end{enumerate}
\end{lem}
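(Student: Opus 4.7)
The overall strategy is that this is essentially a verbatim transfer of Theorem 5.5.1(i)--(ii) of \cite{B15} from $G$ to $G'=G\times K$, and every step is simplified by the compactness of $K$. The key observations are that for $g'=(g,k)\in G'$ one has $\Xi^{G'}(g')\sim \Xi^G(g)$ and $\sigma_{G'}(g')\sim \sigma_G(g)$ uniformly in the $K$-component, so standard Harish-Chandra estimates on $G$ pull back to $G'$, and that any integral over $K$ can be dominated by a supremum times $\vol(K)$.

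For part (1) with $\ast=A$, I would change variables $g\mapsto g_1 g g_2^{-1}$ in the defining integral to obtain $K^A_{f,f'}(g_1,g_2)=\int_{G'} f(g)\, f'(g_1 g g_2^{-1})\,dg$, then apply the Harish-Chandra Schwartz bounds $|f(g)|,|f'(g)|\ll \Xi^{G'}(g)\sigma_{G'}(g)^{-N}$ valid for any $N$, combined with the standard convolution/doubling estimate for $\Xi^{G'}$ (Proposition 1.5.1 of \cite{B15}) and the submultiplicative bound $\sigma_{G'}(g_1 g g_2^{-1})\ll \sigma_{G'}(g_1)+\sigma_{G'}(g)+\sigma_{G'}(g_2)$, which converts decay in one variable into polynomial growth in the other. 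This gives both one-sided estimates for $\ast=A$. The cases $\ast=1,2$ follow from $\ast=A$ by integrating over one or two additional copies of $K$: since $K$ is compact, $\Xi^{G'}(kx)$ and $\sigma_{G'}(kx)$ are equivalent to $\Xi^{G'}(x)$ and $\sigma_{G'}(x)$ uniformly in $k$, so the bounds descend without modification. The diagonal estimate is obtained by multiplying the two one-sided bounds with parameters $(d_1,d'_1)$ and $(d_2,d'_2)$ and taking the square root, yielding
$$|K^\ast_{f,f'}(x,x)|\ll \Xi^{G'}(x)^2\,\sigma_{G'}(x)^{(d'_1+d'_2-d_1-d_2)/2};$$
since $d_1,d_2$ can be taken arbitrarily large while the corresponding $d'_i$ depend only on the Schwartz seminorms of $f,f'$, the exponent can be made $\leq -d$ for any prescribed $d$.

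For part (2), the diagonal bound from part (1) with $g=k_1 x$, combined with the $K$-compactness equivalences $\Xi^{G'}(k_1 x)\sim \Xi^{G'}(x)$ and $\sigma_{G'}(k_1 x)\sim \sigma_{G'}(x)$, shows that the inner double $K$-integral is majorized by a constant multiple of $\Xi^{G'}(x)^2\sigma_{G'}(x)^{-d}$. The outer integral over $K\backslash G'$ is then absolutely convergent for $d$ sufficiently large by Proposition 1.5.1(v) of \cite{B15} applied on $G'$.

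There is no genuine obstacle here; the only item requiring some care is the bookkeeping of exponents when combining the two one-sided estimates to produce the diagonal bound, and this is already done in an identical fashion in the proof of Theorem 5.5.1 of \cite{B15}. The conceptual content is simply that the passage from $G$ to $G'=G\times K$ is transparent because the added direction is compact.
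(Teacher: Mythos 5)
Your reductions of $K^{1}_{f,f'}$ and $K^{2}_{f,f'}$ to $K^{A}_{f,f'}$ via the compactness of $K$, and your deduction of part (2) from the diagonal bound together with $\int_{K\backslash G'}\Xi^{G'}(x)^2\sigma_{G'}(x)^{-d}dx<\infty$, are fine — and indeed this is the only place where "note that $K$ is compact" does any work, since in \cite{B15} the corresponding subgroup is noncompact. But the estimates for $K^{A}_{f,f'}$ themselves are not obtained by the route you describe, and the gap is not mere bookkeeping. The decisive problem is the diagonal bound: it is \emph{false} for arbitrary $f,f'\in\CC(G')$. Take $f=f'$ nonnegative, compactly supported, with $f(1)>0$; then for $a$ in a positive chamber,
$$K^{A}_{f,f'}(a,a)=\int_{G'}f(a^{-1}ga)f(g)\,dg\;\geq\;c\cdot\mathrm{vol}\{g\in U:\,a^{-1}ga\in U\}\;\asymp\;\delta_{P_0}(a)^{-1}\;\asymp\;\Xi^{G'}(a)^{2}\sigma_{G'}(a)^{-2d_0}$$
for a \emph{fixed} $d_0$, which contradicts $\ll\Xi^{G'}(a)^{2}\sigma_{G'}(a)^{-d}$ once $d>2d_0$. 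So any correct proof of the diagonal decay must use the standing hypotheses on $f$ fixed just before the lemma ($f$ strongly cuspidal with compactly supported Fourier transform), and your argument never invokes either. Consequently your assertion that the exponents $d_i'$ in the one-sided bounds "depend only on the Schwartz seminorms of $f,f'$" (i.e.\ may be taken uniformly in $d_i$) cannot be right: if it were, your interpolation would prove the false statement above for all Schwartz $f,f'$. As stated in the lemma, $d'$ depends on $d$, and the interpolation then yields only a fixed polynomial exponent on the diagonal, not arbitrary decay.

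There is a second, subtler issue in the one-sided bounds themselves. Feeding the pointwise bounds $|f|,|f'|\ll\Xi^{G'}\sigma_{G'}^{-N}$ into the integral and applying the doubling estimate produces a bound of the shape $\Xi^{G'}(g_1g_2)\,\sigma_{G'}(g_1)^{-N}\sigma_{G'}(g_2)^{N'}$, not $\Xi^{G'}(g_1)\Xi^{G'}(g_2)\cdots$. Subadditivity of the Cartan projection gives $\Xi^{G'}(g_1)\Xi^{G'}(g_2)\ll\Xi^{G'}(g_1g_2)\sigma_{G'}(g_1)^{d_0}\sigma_{G'}(g_2)^{d_0}$ but emphatically not the converse (take $g_2=g_1^{-1}$, where $\Xi^{G'}(g_1g_2)=1$ while $\Xi^{G'}(g_1)\Xi^{G'}(g_2)$ is exponentially small), so what your method yields is strictly weaker than the asserted inequalities. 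The paper's own proof is a citation of Theorem 5.5.1(i)--(ii) of \cite{B15}, whose argument runs through the spectral decomposition of $K^{A}_{f,f'}$ as a matrix coefficient of the regular representation of $G'\times G'$ together with the compactness of the support of $\pi\mapsto\pi(f)$; you should either reproduce that argument or cite it, rather than attempt the elementary estimate.
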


\begin{prop}\label{J_aux}
We have
$$J_{aux}(f,f')=\int_{\CX(G')} D(\pi) \theta_f(\pi)\overline{\CL_\pi(\pi(\overline{f'}))}d\pi.$$
\end{prop}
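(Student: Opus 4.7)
The plan is to follow the strategy initiated by Waldspurger in his proof of the local trace formula and adapted by Beuzart-Plessis in the Galois model case \cite{B18}. The starting point is the Plancherel formula on $G'$: since $\pi(F) = \pi(g_1)\pi(f)\pi(g_2)^{-1}$ for $F(g) = f(g_1^{-1} g g_2)$, together with the substitution $g\mapsto g^{-1}$, Plancherel gives
$$K_{f,f'}^{A}(g_1,g_2) = \int_{\CX_{temp}(G')} \tr\bigl(\pi(g_1)\pi(f)\pi(g_2)^{-1}\pi(\check{f'})\bigr)\,\mu(\pi)\,d\pi,$$
where $\check{f'}(g) = f'(g^{-1})$. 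Integrating over the two copies of $K$ in the definitions of $K^{1}_{f,f'}$ and $K^{2}_{f,f'}$ inserts the orthogonal projector $P_{K,\pi} = \int_K \pi(k)\,dk$ onto $\pi^K$ on both sides. Setting the two arguments equal, I obtain
$$K_{f,f'}^{2}(x,x) = \int_{\CX_{temp}(G')} \CL_\pi\bigl(\pi(x)\pi(f)\pi(x)^{-1}\pi(\check{f'})\bigr)\,\mu(\pi)\,d\pi$$
after using the identity $\CL_\pi(T) = \tr(P_{K,\pi}T)$ and the cyclicity of the trace.

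Next, using the bounds from the preceding lemma together with Proposition \ref{convergence}, I justify interchanging the integrals in the definition $J_{aux}(f,f') = \int_{K\bs G'} K^{2}_{f,f'}(x,x)\,dx$ with the spectral integral. The resulting inner integral over $x\in K\bs G'$ of $\CL_\pi\bigl(\pi(x)\pi(f)\pi(x)^{-1}\pi(\check{f'})\bigr)$ should be interpreted, via Corollary \ref{L_pi cor} and Lemma \ref{L_pi lem}, as a pairing of $\pi(f)$ against a $K$-biinvariant section built from $\pi(\check{f'})$ and the operator $L_\pi$. The crucial step is now to invoke the strong cuspidality of $f$: for strongly cuspidal test functions, the tempered Fourier transform $\pi\mapsto \pi(f)$ on $\CX_{temp}(G')$ can be re-expressed in terms of the weighted character $\pi \mapsto \theta_f(\pi)$ on the virtual tempered spectrum $\CX(G')$, with the factor $D(\pi)$ arising from the Knapp--Stein normalization; this is the content of Theorem 5.5.1 and Proposition 5.6.1 of \cite{B15}. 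Applying this substitution converts $\int_{\CX_{temp}(G')}\tr(\pi(f)\cdot S(\pi))\mu(\pi)\,d\pi$ to $\int_{\CX(G')} D(\pi)\theta_f(\pi)\cdot S'(\pi)\,d\pi$ for the appropriate section $S'$. Finally, using $\pi(\check{f'}) = \pi(\overline{f'})^{*}$ for unitary $\pi$ together with the definition of $\CL_\pi$ via the Hermitian pairing $\CB_\pi$ (Lemma \ref{lem linear form nondegenerat}), I identify the resulting section with $\overline{\CL_\pi(\pi(\overline{f'}))}$, yielding the claimed formula.

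The principal difficulty, as in \cite{B18}, lies in the passage from the tempered spectral expansion to the virtual-tempered one in the presence of the $K$-projectors: one must ensure that the operator-valued sections $\pi\mapsto \int_{K\bs G'}\pi(x)^{-1}P_{K,\pi}\pi(\check{f'})\pi(x)\,dx$ converge in the relevant Schwartz topology on $C^\infty(\CX_{temp}(G'),\CE(G'))$ so that the strongly cuspidal replacement theorem can be applied. This requires careful use of the uniform bounds on $\Xi^{G'}$ from Proposition \ref{convergence} and the asymptotic estimates for $K_{f,f'}^{2}(x,x)$ from the preceding lemma. Unlike in the strongly tempered Gelfand pair cases (where $m(\pi)\le 1$ allows a direct factorization), here we must carry the nontrivial multiplicity through the computation, which is why the factor $1/m(\pi)$ in Lemma \ref{L_pi lem}(ii) enters the argument and ultimately cancels in the spectral expansion of $J_{aux}$.
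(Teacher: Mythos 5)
Your overall strategy is the paper's: everything ultimately rests on the strongly cuspidal spectral expansion of Theorem 5.5.1 of \cite{B15}, after which one integrates out the remaining $K$-variable to produce $\overline{\CL_\pi(\pi(\overline{f'}))}$. However, your displayed intermediate identity for $K^2_{f,f'}(x,x)$ is false as stated. Integrating over the \emph{two} copies of $K$ inserts the projector $P_{K,\pi}=\int_K\pi(k)\,dk$ in two distinct places, so the correct formula is
$$K_{f,f'}^{2}(x,x)=\int_{\CX_{temp}(G')}\tr\bigl(P_{K,\pi}\,\pi(x)\pi(f)\pi(x)^{-1}\,P_{K,\pi}\,\pi(\check{f'})\bigr)\mu(\pi)\,d\pi,$$
and cyclicity of the trace does not let you merge the two projectors into one: $\tr(PAPB)=\tr\bigl((PAP)(PBP)\bigr)$ depends only on the compressions of $A$ and $B$ to $\pi^K$, whereas $\tr(PAB)=\CL_\pi(AB)$ also sees off-diagonal blocks, and the two differ in general. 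The two-projector expression is exactly what later produces the quotient $\CL_\pi(\cdot)\overline{\CL_\pi(\cdot)}/m(\pi)$ in Lemma \ref{L_pi lem}(5), which is genuinely different from $\CL_\pi$ of the product. Moreover, starting from your single-projector formula the next step breaks down: unfolding the one projector turns the $x$-integral into $\int_{K\bs G'}\int_K\tr\bigl(\pi(kx)\pi(f)\pi(x)^{-1}\pi(\check{f'})\bigr)dk\,dx$, in which the left and right occurrences of $x$ no longer match, so the weighted-character machinery for strongly cuspidal $f$ (which applies to integrals of the form $\int_{G'}\tr(\pi(g)\pi(f)\pi(g)^{-1}S)\,dg$) cannot be invoked.

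The repair is precisely the elementary unfolding the paper performs \emph{before} any spectral input: substituting $k_2=kk_1$ and absorbing the $k_1$-integration into the $K\bs G'$-integration gives
$$J_{aux}(f,f')=\int_{K\bs G'}\int_K\int_K K^A_{f,f'}(k_1x,k_2x)\,dk_1\,dk_2\,dx=\int_K\int_{G'}K^A_{f,f'}(g,kg)\,dg\,dk,$$
after which Theorem 5.5.1(v) of \cite{B15} applies verbatim to the inner integral and yields $\int_{\CX(G')}D(\pi)\theta_f(\pi)\theta_{\bar\pi}(R(k)f')\,d\pi$; the remaining $k$-integral then gives $\overline{\CL_\pi(\pi(\overline{f'}))}$ as you indicate at the end. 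Finally, the factor $1/m(\pi)$ plays no role in this proposition; it enters only afterwards, when $J_{aux}(f,f')$ is compared with $I_{G'}(f)$ via Lemma \ref{L_pi lem}(2) and (5) for a well-chosen $f'$.
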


\begin{proof}
We have 
$$J_{aux}(f,f')=\int_{K\back G'}\int_K\int_K K_{f,f'}^{A}(k_1x,k_2x)dk_1dk_2dx$$
$$=\int_{G'}\int_K K_{f,f'}^{A}(x,kx)dkdx=\int_K\int_{G'} K_{f,f'}^{A}(x,kx)dxdk$$
$$=\int_K\int_{\CX(G')} D(\pi)\theta_f(\pi)\theta_{\bar{\pi}}(R(k)f')d\pi dk=\int_{\CX(G')}\int_K D(\pi)\theta_f(\pi)\theta_{\bar{\pi}}(R(k)f')dk d\pi$$
$$=\int_{\CX(G')} D(\pi) \theta_f(\pi)\overline{\CL_\pi(\pi(\overline{f'}))}d\pi.$$
where the first equality on the third line follows from Theorem 5.5.1(v) of \cite{B15}.
\end{proof}

Now we are ready to prove the spectral expansion. Recall that we have fixed a function $f\in \CC_{scusp}(G')$ whose Fourier transform is compactly supported. Let $\CK$ be the support of the Fourier transform of $f$. By Lemma \ref{L_pi lem}, we have
$$I_{G'}(f,x)=\int_{\CX_{temp}(G')} \CL_{\pi}(\pi(x)\pi(f)\pi(x^{-1})) \mu(\pi)d\pi.$$
By Corollary \ref{L_pi cor}, there exists a function $f'\in \CC(G')$ such that the restriction of $L_\pi \pi(f')$ to $\Hom_K(\bar{\pi},1)$ is the identity map for all $\pi\in \CK$. Fix such a $f'$. We have 
$$\overline{\CL_\pi(\pi(\overline{f'}))}=m(\pi)=m(\bar{\pi}),\;\pi\in \CK.$$
This implies that 
$$I_{G'}(f,x)=\int_{\CX_{temp}(G')} \frac{\CL_{\pi}(\pi(x)\pi(f)\pi(x^{-1})) \overline{\CL_\pi(\pi(\overline{f'}))}}{m(\pi)} \mu(\pi)d\pi=K_{f,f'}^{2}(x,x)$$
where the last equation follows from Lemma \ref{L_pi lem}. In particular, we have
$$I_{G'}(f)=J_{aux}(f,f')=\int_{\CX(G')} D(\pi) \theta_f(\pi)\overline{\CL_\pi(\pi(\overline{f'}))}d\pi=\int_{\CX(G')} D(\pi) \theta_f(\pi)m(\bar{\pi})d\pi=I_{G',spec}(f).$$
This finishes the proof of the spectral expansion.

\section{The distribution on the Lie algebra}\label{sec Lie algebra}
In this section, we will study the analogue of the distribution $I(f)$ on the Lie algebra when $G$ is split. Let $G$ be a split real reductive group with trivial center, $K=G^\theta$ be a maximal compact subgroup of $G$, $\Fk$ be the Lie algebra of $K$ and $\Fp=\Fk^\perp$ be the orthogonal complement of $\Fk$ in $\Fg$ (i.e. $\Fg=\Fp\oplus \Fk$ is the Cartan decomposition). Let $\Ft\subset \Fp$ be a maximal abelian subspace. Since $G$ is split, we know that $\Ft$ is the Lie algebra of a maximal split torus $T$ of $G$. Let $B=TN$ be a Borel subgroup of $G$, $\bar{B}=T\bar{N}$ be its opposite and $W(T)$ be the Weyl group. For $f\in \CS_{scusp}(\Fg)$, define
$$I_{Lie}(f,g)=\int_{\Fk} f(g^{-1}kg)dk,\;I_{Lie}(f)=\int_{K\back G}I(f,g)dg.$$
As in the group case, we know that the integral defining $I_{Lie}(f)$ is absolutely convergent. The goal of this section is to prove the following theorem.

\begin{thm}\label{thm Lie algebra}
For all $f\in \CS_{scusp}(\Fg)$, we have
$$I_{Lie}(f)=\frac{1}{c(G,K,\BR)\cdot |W(T)|}\int_{\Ft} D^G(X)^{1/2}\hat{\theta}_f(X)dX$$
where $c(G,K,\BR)=|K\cap T|$ is the constant defined in Definition \ref{defn geom multiplicity group}.
\end{thm}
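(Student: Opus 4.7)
The strategy is to convert integration over $\Fk$ into integration over $\Fp$ via the Fourier transform, apply the Weyl integration formula on $\Fp$ (Lemma~\ref{Cartan decomposition}) to reduce to an integral over $\Ft$, and identify the remaining piece with $\hat\theta_f$. The Cartan decomposition $\Fg = \Fk \oplus \Fp$ is orthogonal for the Killing form (since $\theta$ is an isometry with $\pm 1$ eigenspaces $\Fk$ and $\Fp$), so the self-dual measure on $\Fg$ is the product of self-dual measures on $\Fk$ and on $\Fp$. Plancherel then yields
\[
\int_{\Fk} \varphi(X)\,dX = \int_{\Fp} \hat{\varphi}(Y)\,dY, \qquad \varphi \in \CS(\Fg).
\]
Applied with $\varphi = f \circ \Ad(g^{-1})$, whose Fourier transform is $\hat f \circ \Ad(g^{-1})$ by $\Ad$-invariance of the Killing form, this rewrites
\[
I_{Lie}(f) = \int_{K \backslash G}\int_{\Fp} \hat f(g^{-1}Yg)\,dY\,dg.
\]

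Next I would apply Lemma~\ref{Cartan decomposition} to the Schwartz function $Y \mapsto \hat f(g^{-1}Yg)$ on $\Fp$ and interchange the orders of integration, a step I would justify from Schwartz decay of $\hat f$ together with the strong-cuspidality-based bounds that give absolute convergence of $I_{Lie}(f)$. The result is
\[
I_{Lie}(f) = \frac{1}{|W(T)|\,|T \cap K|}\int_{\Ft} D^G(Z)^{1/2}\,\Psi(Z)\,dZ,
\]
where
\[
\Psi(Z) := \int_{K \backslash G}\int_K \hat f((kg)^{-1}Z(kg))\,dk\,dg.
\]

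The main obstacle is then to identify $\Psi(Z)$ with $\theta_{\hat f}(Z) = \hat\theta_f(Z)$ for regular $Z \in \Ft$. The naive combination $\int_{K \backslash G}\int_K = \int_G$ produces the divergent $\int_G \hat f(h^{-1}Zh)\,dh$ (since $G_Z = T$ is non-compact for split $Z$), so $\Psi(Z)$ must be handled as a genuine iterated integral. My plan has three ingredients: (i) verify that the Fourier transform preserves strong cuspidality, via a short parabolic-descent argument using $\Fn^{\perp} = \Fm \oplus \Fn$ under the Killing form, so that $\hat f \in \CS_{scusp}(\Fg)$; (ii) invoke \cite[\S 5]{B15} to express $\theta_{\hat f}(Z)$ as a specific weighted orbital integral on $T \backslash G$; and (iii) unfold $\Psi(Z)$ via the Iwasawa decomposition $G = KT\bar N$ to show that the Iwasawa Jacobian together with the $K$-averaging produces precisely the weight appearing in that weighted orbital integral. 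Step (iii) is the crux and the main technical difficulty, since the weight must emerge from an initially unweighted iterated integral. Once this identification is in place, a final bookkeeping of constants (using $c(G,K,\BR) = |K \cap T|$) delivers the stated formula.
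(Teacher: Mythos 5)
Your opening moves coincide with the paper's: the identity $\int_{\Fk}\varphi=\int_{\Fp}\hat\varphi$ (partial Fourier inversion across $\Fp=\Fk^{\perp}$) followed by Lemma \ref{Cartan decomposition} is exactly how the proof of Lemma \ref{integral transform lem} begins. The gap is at the interchange of $\int_{K\backslash G}$ with $\int_{\Ft}$. For fixed regular $Z\in\Ft$ your inner quantity is
$$\Psi(Z)=\int_{K\backslash G}\int_K\hat f\bigl((kg)^{-1}Z(kg)\bigr)\,dk\,dg,$$
and the function $g\mapsto\int_K\hat f((kg)^{-1}Z(kg))\,dk$ is left $K$-invariant, so the iterated integral is literally $\int_G\hat f(g^{-1}Zg)\,dg=\mathrm{vol}(T)\cdot\int_{T\backslash G}\hat f(g^{-1}Zg)\,dg$, which diverges because the stabilizer $G_Z=T$ is a split torus. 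There is no "genuine iterated integral" reading that rescues this: the inner integration is over the compact orbit $\mathrm{Ad}(K)Z\subset\Fp$, not over $\Fk$, so strong cuspidality provides no cancellation in $g$, and Fubini for the fibration $G\to K\backslash G$ shows the iterated integral of the absolute value is already infinite. Consequently your step (iii) cannot succeed: no unfolding of a divergent unweighted integral can produce the weight in the weighted orbital integral defining $\hat\theta_f(Z)$. Note also that the convergence of $I_{Lie}(f)$ itself is only as the iterated integral with $\int_{K\backslash G}$ outermost; once you pull $\int_{\Ft}$ outside, each fiber diverges.

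What is actually needed — and what the paper does — is to truncate \emph{before} interchanging: insert a bi-$K$-invariant cutoff $\kappa_N(g)$ into $I_{Lie}(f)$, so that after the swap one obtains $\int_{T\backslash G}\kappa_{N,T}(g)\hat f(g^{-1}Xg)\,dg$ with the finite weight $\kappa_{N,T}(g)=\int_T\kappa_N(tg)\,dt$. The heart of the proof is then Arthur-style: restrict to $D^G(X)>N^{-b}$ and $\sigma_G(g^{-1}Xg)<\log N$ with controlled errors, replace $\kappa_{N,T}$ asymptotically by the weight $\tilde v(Y,\cdot)$ attached to a positive $(G,T)$-orthogonal set (Proposition \ref{prop change truncation function}), and use strong cuspidality of $\hat f$ to kill the contributions of proper parabolics, leaving exactly the weighted orbital integral $\theta_{\hat f}(X)=\hat\theta_f(X)$. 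Your instinct that "the weight must emerge" and that this is the crux is right, but the mechanism is the truncation, which your setup has already discarded. (Your points (i) and (ii) are fine and standard; (iii) as formulated is not repairable without reintroducing $\kappa_N$.)
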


Since both side of the theorem are continuous on $f$, we only need to prove the identity when the support of $\hat{f}$ is compact modulo conjugation. For the rest of this section, we will assume that the support of $\hat{f}$ is compact modulo conjugation.

The first step is to introduce a sequence of truncation functions $(\kappa_N)_{N>0}$ on $K\back G$. Let
$$T^+=\{t\in T^\circ|\; \alpha(t)\geq 1,\;\forall \alpha\in \Delta\},\;T^-=\{t\in T^\circ|\; \alpha(t)\geq 1,\;\forall \alpha\in \bar{\Delta}\}$$
where $\Delta$ (resp. $\bar{\Delta}$) is the set of positive root with respect to $B=TN$ (resp. $\bar{B}=T\bar{N}$). We have the Iwasawa decomposition $G=BK=\bar{B}K$ and the Cartan decomposition $G=KT^+K=KT^-K$. We fix a sequence $(\kappa_N)_{N>0}$ of smooth functions $\kappa_N:G\rightarrow [0,1]$ satisfying the following three conditions.
\begin{itemize}
\item The function $\kappa_N$ is bi-K-invariant.
\item There exists $C_1>C_2>0$ such that for all $x\in G$, we have
$$\sigma_G(g)\leq C_2N\Rightarrow \kappa_N(g)=1,\;\kappa_N(g)\neq 0\Rightarrow \sigma_G(g)\leq C_1N.$$
\item There exists $C>0$ such that 
$$|\frac{d}{dt}\kappa_N(e^{tX}x)|_{t=0}|\leq C\cdot \Vert X \Vert_{\Fg}$$
for all $x\in G,X\in \Fg$ and $N>0$.
\end{itemize}
It is clear that such functions exist. For $N>0$, define 
$$I_{N,Lie}(f)=\int_{K\back G}\kappa_N(g)I_{Lie}(f,g)dg.$$
We have $I_{Lie}(f)=\lim_{N\rightarrow \infty} I_{N,Lie}(f)$.

\begin{lem}\label{integral transform lem}
For $f\in \CS(\Fg)$, we have
$$I_{N,Lie}(f)=\frac{1}{c(G,K,\BR)\cdot |W(T)|}\int_{\Ft}D^G(X)^{1/2}\int_{T\back G} \kappa_{N,T}(g) \hat{f}(g^{-1}Xg)dgdY$$
where $\kappa_{N,T}(g)=\int_T \kappa_N(tg)dt.$
\end{lem}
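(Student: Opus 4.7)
The plan is to convert $I_{N,Lie}(f)$ into the claimed form through three standard manipulations: applying the Plancherel formula for the orthogonal decomposition $\Fg=\Fk\oplus\Fp$ to move from $f$ to $\hat f$; applying Lemma~\ref{Cartan decomposition} to diagonalize the resulting integral over $\Fp$; and refolding the multiple integral via the factorizations $G=K\cdot(K\backslash G)$ and $G=T\cdot(T\backslash G)$.

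For the first step, note that the Cartan decomposition $\Fg=\Fk\oplus\Fp$ is orthogonal with respect to the chosen $G$-invariant bilinear form, so the self-dual Haar measure on $\Fg$ factors as the product of the self-dual Haar measures on $\Fk$ and $\Fp$ (which are precisely the measures used in the definitions of $I_{Lie}$ and of the Cartan lemma). A direct Fubini computation, using that $X\mapsto \int_\Fp \psi(\langle X,Y\rangle)\,dX$ is, as a distribution in $Y$, the Dirac measure supported on $\Fk$, yields the subspace Plancherel identity $\int_\Fk h(k)\,dk = \int_\Fp \hat h(X)\,dX$ for $h\in\CS(\Fg)$. Applying this to $h(Y)=f(g^{-1}Yg)$, whose Fourier transform is $\hat f(g^{-1}Xg)$ by the $\Ad$-invariance of the bilinear form, gives
$$I_{Lie}(f,g) = \int_\Fp \hat f(g^{-1}Xg)\,dX.$$

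For the second step, apply Lemma~\ref{Cartan decomposition} to the Schwartz function $X\mapsto \hat f(g^{-1}Xg)$ on $\Fp$ to obtain
$$I_{Lie}(f,g) = \frac{1}{|W(T)|\cdot |T\cap K|}\int_\Ft D^G(Y)^{1/2}\int_K \hat f((kg)^{-1}Y(kg))\,dk\,dY.$$
For the third step, multiply by $\kappa_N(g)$ and integrate over $K\backslash G$. Swap the order of integration (justified by the compact support of $\kappa_N$ modulo $K$ and the Schwartz decay of $\hat f$), and use the bi-$K$-invariance $\kappa_N(g)=\kappa_N(kg)$ to write the resulting $(K\backslash G)\times K$-integral as a single integral over $G$ via the standard unfolding $\int_{K\backslash G}\int_K F(kg)\,dk\,dg = \int_G F(g')\,dg'$. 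This produces
$$I_{N,Lie}(f) = \frac{1}{|W(T)|\cdot |T\cap K|}\int_\Ft D^G(Y)^{1/2}\int_G \kappa_N(g)\,\hat f(g^{-1}Yg)\,dg\,dY.$$
Since $Y\in\Ft$ is centralized by $T$, the integrand $g\mapsto \hat f(g^{-1}Yg)$ is left $T$-invariant, so refactoring $\int_G = \int_{T\backslash G}\int_T$ yields exactly $\kappa_{N,T}(g)=\int_T \kappa_N(tg)\,dt$ in the inner integral. Finally, the identification $c(G,K,\BR)=|T\cap K|$ follows from $B\cap K = (TN)\cap K = T\cap K$ (since $N$ is unipotent and $K$ is compact, $N\cap K=\{1\}$), giving the claimed formula.

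The main subtlety, though essentially formal, lies in the first step: one must verify that the subspace Plancherel identity holds with constant exactly $1$, which depends on the self-duality of the induced measures on $\Fk$ and $\Fp$ under the orthogonal splitting. All Fubini swaps are routine given the compact support of $\kappa_N$ modulo $K$ together with $f\in\CS(\Fg)$.
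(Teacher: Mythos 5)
Your proposal is correct and follows essentially the same route as the paper: Fourier inversion across the orthogonal splitting $\Fg=\Fk\oplus\Fp$ to get $I_{Lie}(f,g)=\int_{\Fp}\hat f(g^{-1}Xg)\,dX$, then Lemma \ref{Cartan decomposition}, then unfolding $\int_{K\backslash G}\int_K=\int_G$ and refactoring through $T\backslash G$. The only difference is that you spell out the subspace Plancherel step and the measure normalizations, which the paper compresses into the phrase ``by the Fourier inversion formula.''
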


\begin{proof}
By the Fourier inversion formula and Lemma \ref{Cartan decomposition}, we have
$$I_{Lie}(f,g)=\int_{\Fk} f(g^{-1}kg)dk=\int_{\Fp} \hat{f}(g^{-1} Xg)dX$$
$$=\frac{1}{|K\cap T|\cdot |W(T)|}\int_{K}\int_{\Ft}D^G(X)^{1/2} \hat{f}(g^{-1}k^{-1}Xkg)dXdk.$$
This implies that
$$I_{N,Lie}(f)=\frac{1}{|K\cap T|\cdot |W(T)|}\int_{K\back G} \kappa_N(g) \int_{K}\int_{\Ft} D^G(X)^{1/2} \hat{f}(g^{-1}kXkg)dYdk $$
$$=\frac{1}{c(G,K,\BR)\cdot |W(T)|}\int_{\Ft}D^G(X)^{1/2}\int_G \kappa_N(g) \hat{f}(g^{-1}Xg)dgdX$$
$$=\frac{1}{c(G,K,\BR)\cdot |W(T)|}\int_{\Ft}D^G(X)^{1/2}\int_{T\back G} \kappa_{N,T}(g) \hat{f}(g^{-1}Xg)dgdX.$$
This proves the lemma.
\end{proof}

\begin{lem}
There exists $C>0$ and $k>0$ such that
$$|\kappa_{N,T}(g)|\leq CN^k$$
for all $N>0$ and $g\in G$.
\end{lem}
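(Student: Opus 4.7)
The plan is to bound the volume of the support of the integrand and use the fact that $\kappa_N$ takes values in $[0,1]$. Since $\kappa_N(tg) = 0$ whenever $\sigma_G(tg) > C_1 N$, we have the pointwise bound
$$|\kappa_{N,T}(g)| \leq \vol\bigl(\{t\in T : \sigma_G(tg)\leq C_1 N\}\bigr),$$
so the entire question reduces to estimating this volume uniformly in $g$.

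To control this volume, I would use the Iwasawa decomposition $G = NTK$ (valid because $G$ is split) to write $g = n_g t_g k_g$. By right $K$-invariance of $\sigma_G$, we have $\sigma_G(tg) = \sigma_G(tn_g t_g)$. Now the element $tn_g t_g$ already lies in the Iwasawa decomposition: $tn_g t_g = (t n_g t^{-1}) \cdot (t t_g)$, with $tn_g t^{-1} \in N$ and $tt_g \in T$. The key input is the standard log-norm inequality $\sigma_G(nt') \gg \sigma_T(t')$ for $n\in N$ and $t' \in T$, which is an instance of the projection bound $\sigma_G(g) \gg \|H_P(g)\|$ recorded in Proposition 1.5.1 of \cite{B15}. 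Applied with $t' = tt_g$, this gives a constant $C'>0$ (depending only on the fixed log-norms) such that
$$\sigma_G(tg) \leq C_1 N \implies \sigma_T(tt_g) \leq C' N.$$

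Once we have this, translation invariance of the Haar measure on $T$ yields
$$\vol\bigl(\{t\in T:\sigma_G(tg)\leq C_1 N\}\bigr) \leq \vol\bigl(\{s\in T : \sigma_T(s) \leq C' N\}\bigr),$$
and the right-hand side is manifestly independent of $g$. Finally, since $T$ is a finite union of translates of the split torus $T^\circ$ (the quotient $T/T^\circ$ is finite, as $T\cap K$ is finite), and on $T^\circ$ the log-norm $\sigma_T$ is comparable to $\|\log(\cdot)\|$, a standard Euclidean volume computation on $\Ft$ gives
$$\vol\bigl(\{s\in T:\sigma_T(s)\leq C'N\}\bigr) \leq C\, N^{\dim T}.$$

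Combining the above, $|\kappa_{N,T}(g)|\leq C N^{\dim T}$ uniformly in $g$ and $N$, proving the lemma with $k = \dim T$. The only nontrivial ingredient is the projection inequality $\sigma_G(nt')\gg \sigma_T(t')$, but this is standard and entirely independent of $g$; everything else is bookkeeping with the Iwasawa decomposition and translation invariance of Haar measure, so I do not anticipate any real obstacle.
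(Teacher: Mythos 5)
Your argument is correct and is essentially the paper's (the paper dismisses this as ``a direct consequence of the definition of $\kappa_N$ and $\kappa_{N,T}$''): you bound $\kappa_{N,T}(g)$ by the volume of $\{t\in T:\sigma_G(tg)\leq C_1N\}$ using $0\leq\kappa_N\leq 1$ and the support condition, then make that volume uniform in $g$ via the standard Iwasawa projection inequality $\sigma_T(t')\ll\sigma_G(nt')$ and translation invariance of the Haar measure on $T$. No gaps.
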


\begin{proof}
This is a direct consequence of the definition of $\kappa_N$ and $\kappa_{N,T}$.
\end{proof}

For $l>0$, let $\Ft_{\leq l}$ be the set of $X\in \Ft$ such that  $D^G(X)\leq l$, and let $\Ft_{>l}$ be its complement. We define 
$$I_{N,l}(f)= \int_{\Ft_{>l}}D^G(X)^{1/2}\int_{T\back G}1_{<\log N}(g^{-1}Xg) \kappa_{N,T}(g) \hat{f}(g^{-1}Xg)dgdY$$
where $1_{<\log N}$ is the characteristic function of the set $\{g\in G|\sigma_G(g)<\log N\}$.

\begin{lem}\label{I to I*}
The following statements hold.
\begin{enumerate}
\item There exist $k\in \BN$ and $c>0$ such that $\mid I_{N,Lie}(f)\mid \leq cN^k$ for all $N>0$.
\item There exist $b\geq 1$ and $c>0$ such that $\mid I_{N,Lie}(f)- \frac{1}{c(G,K,\BR)\cdot |W(T)|}I_{N, N^{-b}}(f) \mid\leq cN^{-1}$ for all $N>0$. In particular, we have $$I_{Lie}(f)=\lim_{N\rightarrow \infty}I_{N,Lie}(f)=\frac{1}{c(G,K,\BR)\cdot |W(T)|}\lim_{N\rightarrow \infty}I_{N, N^{-b}}(f).$$
\end{enumerate}
\end{lem}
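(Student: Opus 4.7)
The plan is to start from the integral formula for $I_{N,Lie}(f)$ provided by Lemma \ref{integral transform lem} and exploit two ingredients: the uniform polynomial bound $|\kappa_{N,T}(g)|\leq CN^k$ from the preceding lemma, and Harish-Chandra's classical estimates on orbital integrals of Schwartz functions. Since we have reduced to the case where $\hat{f}$ has compact support modulo conjugation, the relevant $X\in \Ft$ (those $X$ for which $g^{-1}Xg$ meets the support of $\hat{f}$ for some $g$) are confined to a fixed compact subset of $\Ft$, so compact support in the outer variable is automatic.

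For part (1), I would insert the uniform bound on $\kappa_{N,T}$ to obtain
$$|I_{N,Lie}(f)|\leq \frac{CN^k}{c(G,K,\BR)\cdot |W(T)|}\int_{\Ft}D^G(X)^{1/2}\int_{T\back G}|\hat{f}(g^{-1}Xg)|\,dg\,dX,$$
and then invoke local boundedness of $X\mapsto D^G(X)^{1/2}\int_{T\back G}|\hat{f}(g^{-1}Xg)|dg$ on $\Ft$ together with compact support in $X$ to conclude that the double integral is finite and independent of $N$, giving $|I_{N,Lie}(f)|\ll N^k$.

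For part (2), I would write the difference (up to the factor $\frac{1}{c(G,K,\BR)\cdot |W(T)|}$) as a sum $A_N+B_N$, where $A_N$ is the contribution from $X\in \Ft_{\leq N^{-b}}$ of the full inner integral, and $B_N$ is the contribution from $X\in \Ft_{>N^{-b}}$ of the region $\sigma_\Fg(g^{-1}Xg)\geq \log N$. For $A_N$, the pointwise bound $D^G(X)^{1/2}\leq N^{-b/2}$ on $\Ft_{\leq N^{-b}}$, the polynomial decay of the measure of $\Ft_{\leq N^{-b}}$ inside the compact support region in $X$, and the $O(N^k)$ bound from part (1) together give $|A_N|\ll N^{k-b/2-b/d_0}$ for some $d_0>0$ depending on $\deg D^G$; choosing $b$ large makes this $O(N^{-1})$. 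For $B_N$, I would split Schwartz decay as $|\hat{f}(g^{-1}Xg)|\ll N^{-d}(1+|g^{-1}Xg|_\Fg)^{-d'}$ on the truncation complement, then bound the $g$-integral by $N^k$ times the orbital integral of the Schwartz-type function $Y\mapsto (1+|Y|_\Fg)^{-d'}$, which is again locally bounded against $D^G(X)^{1/2}$ and compactly supported in $X$; taking $d$ large makes $|B_N|\ll N^{-1}$.

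The main technical obstacle will be handling the singular locus $D^G(X)=0$: one must ensure that $D^G(X)^{1/2}$ times the orbital integral of $|\hat{f}|$ is genuinely uniformly bounded (not merely finite) on compact subsets of $\Ft$, so that the truncation to $\Ft_{>N^{-b}}$ approximates $\Ft$ with only polynomially small error compatible with the $N^k$ blowup from $\kappa_{N,T}$. Harish-Chandra's local boundedness of orbital integrals of Schwartz functions provides exactly this input. Once $b$ and $d$ are chosen sufficiently large, the bounds on $A_N$ and $B_N$ combine to yield the second assertion, and the final identification of $I_{Lie}(f)$ as $\lim_{N\to\infty}\frac{1}{c(G,K,\BR)\cdot |W(T)|}I_{N,N^{-b}}(f)$ follows by passing to the limit using part (1) and the established estimate.
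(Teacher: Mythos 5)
Your argument is correct and is essentially the paper's own proof, which simply combines the bound $|\kappa_{N,T}(g)|\le CN^k$ from the preceding lemma with (1.2.3)--(1.2.4) of \cite{B15} (precisely the local boundedness of the $D^G(X)^{1/2}$-normalized absolute orbital integrals and the tail estimate for the region $\sigma_\Fg(g^{-1}Xg)\ge \log N$ that you invoke). One bookkeeping remark on $A_N$: do not pull out the factor $D^G(X)^{1/2}\le N^{-b/2}$ separately, since the remaining unnormalized integral $\int_{T\back G}|\hat f(g^{-1}Xg)|\,dg$ blows up like $D^G(X)^{-1/2}$; instead keep $D^G(X)^{1/2}$ attached to the inner integral so that it stays locally bounded, and obtain the smallness solely from $N^k$ times the sublevel-set measure $\mathrm{meas}\{X\in\omega_f:\ D^G(X)\le N^{-b}\}\ll N^{-b\alpha}$, which is the other ingredient you already list.
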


\begin{proof}
This is a direct consequence of the Lemma above and (1.2.3), (1.2.4) of \cite{B15}.
\end{proof}

The next step is to change the truncation function $\kappa_N$. Let $\Fa=\Fa_T$ and $\Fa^+$ be the positive chamber with respect to the Borel subgroup $B=TN$. For $Y\in \Fa^+$, we have the positive $(G,T)$-orthogonal set $(Y_P)_{P\in \CP(T)}$ as defined in Section 1.9 of \cite{B15}. For $g\in G$, we can define the $(G,T)$-orthogonal set $\CY(g)=(\CY(g)_P)_{P\in \CP(T)}$ with $\CY(g)_P=Y_P-H_{\bar{P}}(g)$. As in (10.10.1) of \cite{B15}, we have the following statement.

\begin{itemize}
\item[(1)] There exists $c>0$ such that for all $g\in G$ and $Y\in \Fa^+$ with
$$\sigma_G(g)\leq c\inf_{\alpha\in \Delta} \delta(Y),$$
the $(G,T)$-orthogonal set $\CY(g)=(\CY(g)_P)_{P\in \CP(T)}$ is positive.
\end{itemize}

On the other hand, if $\CY=(\CY_P)_{P\in \CP(T)}$ is a positive $(G,T)$-orthogonal set and $Q=LU\in \CF(T)$, we will denote by $\sigma_{T}^{Q}(\cdot, \CY)$ (resp. $\tau_{T}^{Q}(\cdot)$) the characteristic function in $\Fa$ of the sum of $\Fa_L$ with the convex hull of the family $(\CY_P)_{P\subset Q}$ (resp. the characteristic function of $\Fa_{T}^{L}+\Fa_{Q}^{+}$). By Lemma 1.8.4(3) of \cite{LW}, we have
\begin{equation}\label{change truncation 1}
\sum_{Q\in \CF(T)}\sigma_{T}^{Q}(\zeta,\CY)\tau_Q(\zeta-\CY_Q)=1,\;\zeta\in \Fa_T.
\end{equation}

For $Y\in \Fa^+$, define the function $\tilde{v}(Y,\cdot)$ on $T\back G$ to be
$$\tilde{v}(Y,g)=\int_T \sigma_{T}^{G}(H_T(t),\CY(g))dt.$$
Set
$$J_{Y,N^{-b}}(f)= \int_{\Ft_{>N^{-b}}}D^G(Y)^{1/2}\int_{T\back G}1_{<\log N}(g^{-1}Xg) \tilde{v}(Y,g) \hat{f}(g^{-1}Xg)dgdY$$
for all $N\geq 1$ and $Y\in \Fa^+$.

\begin{prop}\label{prop change truncation function}
There exists $c_1,c_2>0$ such that
$$|J_{Y,N^{-b}}(f)-I_{N,N^{-b}}(f)|\ll N^{-1}.$$
for all $N\geq 1$ and $Y\in \Fa^+$ that satisfy the following two inequalities
\begin{equation}\label{change truncation 2}
c_1\log(N)\leq \inf_{\alpha\in \Delta}\alpha(Y),\;\sup_{\alpha\in \Delta}\alpha(Y)\leq c_2N.    
\end{equation}
\end{prop}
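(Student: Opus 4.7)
The plan is to estimate the difference
$$J_{Y,N^{-b}}(f)-I_{N,N^{-b}}(f)=\int_{\Ft_{>N^{-b}}}D^G(X)^{1/2}\int_{T\back G}1_{<\log N}(g^{-1}Xg)\bigl(\tilde v(Y,g)-\kappa_{N,T}(g)\bigr)\hat f(g^{-1}Xg)\,dg\,dX$$
following the standard Waldspurger--Arthur pattern: first polynomially confine the $g$-integration in terms of $N$, then replace the Harish-Chandra-type truncation $\kappa_{N,T}$ by the Arthur-type weight $\tilde v(Y,\cdot)$ using the combinatorial identity \eqref{change truncation 1}, and finally absorb the residual parabolic contributions via the Schwartz decay of $\hat f$.

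For the first step, I will use that $\hat f$ is supported in $\Gamma^G$ for some compact $\Gamma\subset\Fg$, so that the cutoff $\sigma_G(g^{-1}Xg)\le\log N$ together with $D^G(X)\ge N^{-b}$ forces $\sigma_{T\back G}(g)\ll N^{b'}$ for some $b'$ depending on $b$ (by the usual regularity estimate for conjugation distances near regular semisimple elements). For the second step, I will choose $c_1,c_2$ so that whenever \eqref{change truncation 2} holds and $g$ lies in the polynomially bounded region just obtained, the inequality $\sigma_G(g)\le c\inf_{\alpha\in\Delta}\alpha(Y)$ of statement~(1) is satisfied; then $\CY(g)$ is a positive $(G,T)$-orthogonal set. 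Writing both truncations as integrals over $T$, namely $\kappa_{N,T}(g)=\int_T\kappa_N(tg)\,dt$ and $\tilde v(Y,g)=\int_T\sigma_T^G(H_T(t),\CY(g))\,dt$, and applying \eqref{change truncation 1} to the integrand of $\tilde v(Y,g)$, the difference $\tilde v(Y,g)-\kappa_{N,T}(g)$ splits into a main boundary piece (supported in a thin shell where the two underlying convex sets disagree) plus a sum over proper $Q\in\CF(T)$ of Arthur weights $\tau_Q(H_T(t)-\CY(g)_Q)$. For the third step, on the support of each proper-$Q$ term, the standard descent in $H_{\bar Q}(g)$ combined with the Schwartz decay of $\hat f$ (any negative power of $\sigma_G(g^{-1}Xg)$) will yield an $O(N^{-K})$ bound for any $K>0$, once $c_1$ is chosen large enough; the main boundary piece will be controlled by the volume of the shell together with the polynomial growth of $D^G(X)^{1/2}$ on $\Ft_{>N^{-b}}$.

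The hardest part will be the boundary estimate: one must quantify, uniformly in $g$ over a region of polynomial size in $N$, how closely the convex polytope defined by $\CY(g)$ matches the ball-like support of $\kappa_N$ under \eqref{change truncation 2}, and arrange the constants $c_1,c_2$ together with the exponent $b$ so that both the shell volume and the Schwartz-tail contribution combine to the claimed $N^{-1}$ saving. This is the same delicate balance that appears in the analogous comparison step in \cite{B15} (and in its Galois-model adaptation in \cite{B18}), and I would expect those arguments to transport to the present setting essentially verbatim once the polynomial reduction of the first step is in hand.
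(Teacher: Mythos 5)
Your overall skeleton (the partition of unity \eqref{change truncation 1}, positivity of $\CY(g)$, and a term-by-term comparison) is the right one and matches the paper's reduction to Proposition 10.10.1 of \cite{B15}. But there are two genuine problems in the way you propose to execute it.

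First, the confinement step. You claim the cutoffs only force $\sigma_{T\back G}(g)\ll N^{b'}$, and then say you will choose $c_1$ so that $\sigma_G(g)\le c\inf_{\alpha\in\Delta}\alpha(Y)$. These are incompatible: the hypothesis \eqref{change truncation 2} only guarantees $\inf_\alpha\alpha(Y)\ge c_1\log N$, and no choice of $c_1$ makes $N^{b'}\le c\,c_1\log N$ for large $N$. What is actually true, and what the paper uses, is the sharper estimate (1.2.2) of \cite{B15}: $\sigma_G(g^{-1}Xg)<\log N$ together with $D^G(X)>N^{-b}$ forces $\sigma_G(l),\sigma_G(u)\ll\log N$ after the Iwasawa decomposition $g=luk$ and translation of $l$ by $T$. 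The logarithmic (not polynomial) bound is precisely why the cutoffs $1_{<\log N}$ and $\Ft_{>N^{-b}}$ were introduced, and it is what makes the positivity of $\CY(g)$ compatible with $c_1\log N\le\inf_\alpha\alpha(Y)$.

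Second, and more seriously, the mechanism you invoke to kill the proper-parabolic terms is wrong. On the support of $1_{<\log N}(g^{-1}Xg)$ the Schwartz decay of $\hat f$ gives no smallness whatsoever, since $\sigma_G(g^{-1}Xg)$ is bounded by $\log N$ there; "any negative power of $\sigma_G(g^{-1}Xg)$" is at best $(\log N)^{-K}$, which does not produce $N^{-1}$, and in any case the region of integration is not where $\hat f$ is small. The proper-$Q$ contributions vanish because $\hat f$ is \emph{strongly cuspidal}: writing $g=luk$ with respect to $\bar Q=L\bar U$, one shows that the weight $\kappa_{N,Q}^{Y}$ is almost invariant under left translation by $\bar U$, namely $|\kappa_{N,Q}^{Y}(luk)-\kappa_{N,Q}^{Y}(lk)|\ll N^{-1}$ for $c_1$ large (this is the content of the lemma the paper actually proves, the analogue of (10.10.10) of \cite{B15}, and its proof requires the root estimates $\langle\beta,H_T(t)\rangle\ge C_3\log N$ coming from $\tau_Q(H_T(t)-\CY(g)_Q)=1$). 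Replacing $\kappa_{N,Q}^{Y}(luk)$ by $\kappa_{N,Q}^{Y}(lk)$ then costs $O(N^{-1})$, and the leading term is exactly $\int_{\bar U}\hat f(k^{-1}u^{-1}l^{-1}Xluk)\,du=0$ by strong cuspidality. Without this input your argument cannot close: the proper-parabolic terms are not small pointwise, they only cancel after integration over $\bar U$.
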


\begin{proof}
Let $\omega_f$ be a compact subset of $\Ft$ such that $Supp(\hat{f})\cap (\Ft-\omega_f)^G=\emptyset$ (recall that $Supp(\hat{f})$ is compact modulo conjugation), and let $\CA_N$ be the subset of $\omega_f\times T\back G$ consisting of pairs $(X,g)$ such that $D^G(X)>N^{-b}$ and $\sigma_G(g^{-1}Xg)<\log N$. Let $Q=LU\in \CF(T)$ be a proper parabolic subgroup and $\bar{Q}=L\bar{U}$ be its opposite. Let $\CB_N$ be the set of quadruple $(X,l,u,k)\in \Ft\times T\back L \times \bar{U}\times K$ such that $(X,luk)\in \CA_N$. Define
$$\kappa_{N,Q}^{Y}(g)=\int_T \kappa_N(tg)\sigma_{T}^{Q}(H_T(t),\CY(g))\tau_Q(H_M(t)-\CY(g)_Q)dt$$
for all $N>0,\;(X,g)\in \CA_N$ and $Y\in \Fa^+$ that satisfy \eqref{change truncation 2}. The proposition can be proved by the exact same argument as Proposition 10.10.1 of \cite{B15} once we have proved the following lemma which is an analogue of (10.10.10) of \cite{B15} for our case.

\begin{lem}
For $c_1$ large enough, we have
$$|\kappa_{N,Q}^{Y}(luk)-\kappa_{N,Q}^{Y}(lk)|\ll N^{-1}$$
for all $N>1$, $(X,l,u,k)\in \CB_N$ and $Y\in \Fa^+$ satisfying \eqref{change truncation 2}.
\end{lem}

\begin{proof}
By (1.2.2) of \cite{B15}, we know that there exists $c>0$ such that for all $N>1$ and for all $(X,l,u,k)\in \CB_N$, up to translating $l$ by an element of $T$, we have $\sigma_G(l),\sigma_G(u)<c\log(N)$. As a result, we only need to prove the following statement.

\begin{itemize}
\item[(2)] For $c_1$ large enough, we have
$$|\kappa_{N,Q}^{Y}(ug)-\kappa_{N,Q}^{Y}(g)|\ll N^{-1}$$
for all $N>1$, $u\in \bar{U}$, $g\in G$ and $Y\in \Fa^+$ satisfying \eqref{change truncation 2} and the inequalities $\sigma_G(u),\sigma_G(g)<c\log(N)$.
\end{itemize}

We have
$$|\kappa_{N,Q}^{Y}(ug)-\kappa_{N,Q}^{Y}(g)| \leq \int_T |\kappa_N(tug)-\kappa_N(tg)|\sigma_{T}^{Q}(H_T(t),\CY(g))\tau_Q(H_M(t)-\CY(g)_Q)dt.$$
By the definition of $\kappa_N$ and condition on $g,u$, we know that there exists $C>0$ such that
$$\kappa_N(tug)-\kappa_N(tg)\neq 0\Rightarrow \sigma_G(t)\leq CN.$$
Since the volume of the set $\{t\in T|\;\sigma_G(t)\leq CN\}$ is bounded by $C'N^k$ for some $C',k>0$, it is enough to prove the following statement.
\begin{itemize}
\item[(2)] For $c_1$ large enough, for all $t\in T$ satisfying 
$$\sigma_{T}^{Q}(H_T(t),\CY(g))\tau_Q(H_M(t)-\CY(g)_Q)=1,$$
we have 
$$|\kappa_N(tug)-\kappa_N(tg)|\leq N^{-k-1}.$$
\end{itemize}

Fix a large constant $C_3>0$ and let $\Sigma_{Q}^{+}$ be the set of roots of $T$ in $U_Q$. By choosing the constant $c_1$ in \eqref{change truncation 2} large enough, we know that for all $t\in T$ satisfying 
$$\sigma_{T}^{Q}(H_T(t),\CY(g))\tau_Q(H_M(t)-\CY(g)_Q)=1,$$
we have 
$$<\beta,H_T(t)> \geq C_3\log(N),\;\forall \beta\in \Sigma_{Q}^{+}.$$
This implies that for any $C_4>0$, we can choose $c_1$ large enough such that 
$$tut^{-1}\in \exp(B(0,N^{-C_4})),\;B(0,N^{-C_4})=\{X\in \Fg|\;\Vert X\Vert_\Fg < N^{-C_4}\}.$$
Then (2) follows from the last condition of the truncation function $\kappa_N$. This proves the lemma.
\end{proof}
\end{proof}

Now Theorem \ref{thm Lie algebra} follows from the proposition above and the exact same argument as in Section 10.11 of \cite{B15}. We will skip the details here.

\section{The geometric expansion}\label{sec geometric side 1}
In this section, we will prove the geometric expansion of the trace formula. In Section 6.1, we will prove some reductions. In Section 6.2 we will prove the geometric expansion for a special $\theta_K$. In Section 6.3, we will prove the geometric expansion for the general case.

\subsection{Some reductions}
In this subsection we will prove some reductions for the geometric expansion. By our inductional hypothesis, we know that the multiplicity formula and trace formula hold for all the proper Levi subgroups of $G$ and for all the groups $G_t$ where $1\neq t\in G_{ss}$. By the spectral side of the trace formula and the fact that the multiplicity formula holds for all induced representations, we have the following proposition which is an analogue of Proposition 11.5.1 of \cite{B15}.

\begin{prop}\label{geom prop 0}
For any  $\theta_K\in C^{\infty}(K)^K$, the distribution $I(f,\theta_K)$ only depends on the quasi-character $\theta_f$. Moreover, there exists a continuous linear form $J(\cdot,\cdot)$ on $QC(G)\times C^{\infty}(K)^K$ such that
\begin{itemize}
\item $J(\theta_f,\theta_K)=I(f,\theta_K)-I_{geom}(f,\theta_K)$ for all $f\in \CC_{scusp}(G)$ and $\theta_K\in C^{\infty}(K)^K$;
\item $J(\cdot,\cdot)$ is supported on $G_{ell}\times K$.
\end{itemize}
\end{prop}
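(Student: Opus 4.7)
The plan is to combine the spectral expansion already established in Section~4 with the inductive hypothesis on proper Levi subgroups, using Peter--Weyl together with the continuity estimate of Proposition~\ref{convergence} to pass from characters of finite-dimensional $K$-representations to arbitrary $\theta_K\in C^\infty(K)^K$. I proceed in three stages.

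First I would show that $I(f,\theta_K)-I_{geom}(f,\theta_K)$ depends on $f$ only through the quasi-character $\theta_f$. The geometric side $I_{geom}(f,\theta_K)=m_{geom,G,K}(\theta_f,\theta_K)$ is defined this way. For $I(f,\theta_K)$, the spectral half of Theorem~\ref{thm trace formula} (proved in Section~4) gives, when $\theta_K=\theta_\omega$ with $\omega\in\mathrm{Irr}(K)$ finite dimensional,
\[
I(f,\theta_\omega)=I_{spec}(f,\omega)=\int_{\CX(G/A_G^\circ)} D(\pi)\theta_f(\pi)\,m(\bar\pi,\omega^\vee)\,d\pi,
\]
which visibly factors through $\theta_f$. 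By the Peter--Weyl theorem, $\mathrm{Span}\{\theta_\omega\}_{\omega\in\mathrm{Irr}(K)}$ is dense in $C^\infty(K)^K$; combined with the continuity estimate of Proposition~\ref{convergence}, the dependence only on $\theta_f$ propagates to all $\theta_K$.

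Second, I would construct $J$ as a continuous bilinear form on $QC(G)\times C^\infty(K)^K$. For $f\in\CC_{scusp}(G)$, the quasi-character $\theta_f$ lies in the subspace $QC_{ell}(G)\subset QC(G)$ of quasi-characters supported on the $G$-conjugation hull of elliptic semisimple elements (a standard property of weighted orbital integrals of strongly cuspidal functions, cf.\ Section~5 of \cite{B15}), and the image of $f\mapsto\theta_f$ is dense in $QC_{ell}(G)$ by the analogue of Corollary~5.7.2(iv) of loc.\ cit. Proposition~\ref{prop geometric side convergence} gives continuity of $\theta\mapsto m_{geom,G,K}(\theta,\theta_K)$, and the spectral formula above gives continuity of $\theta_f\mapsto I(f,\theta_K)$. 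Therefore $J_0(\theta_f,\theta_K):=I(f,\theta_K)-I_{geom}(f,\theta_K)$ extends continuously to $QC_{ell}(G)\times C^\infty(K)^K$. Using the topological decomposition of $QC(G)$ from Section~4.4 of \cite{B15} into its elliptic piece and a complement supported on non-elliptic semisimple conjugacy classes, I define $J$ on all of $QC(G)\times C^\infty(K)^K$ by extending $J_0$ by zero on the non-elliptic complement. By construction, $J$ is supported on $G_{ell}\times K$.

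The principal obstacle is verifying that the extension by zero in the second stage yields a genuinely continuous linear form on $QC(G)\times C^\infty(K)^K$, and not merely on the two pieces separately. This is where the inductive hypothesis enters: since the trace formula and multiplicity formula already hold for all proper Levi subgroups of $G$, the parabolic induction behavior of both $I$ (from the Iwasawa decomposition and Frobenius reciprocity) and $I_{geom}$ (from Proposition~\ref{germ parabolic induction}) gives $I(f,\theta_K)=I_{geom}(f,\theta_K)$ whenever $\theta_f$ is parabolically induced from a proper Levi. This ``matching on induced quasi-characters'' is what guarantees that $J_0$ really does vanish in the limit when $\theta_f$ approaches the non-elliptic complement, so extension by zero produces a continuous object and both claimed properties of $J$ follow.
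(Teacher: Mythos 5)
Your first stage is essentially the paper's starting point and is fine: the spectral expansion identifies $I(f,\theta_\omega)$ with $\int_{\CX(G)}D(\pi)\theta_f(\pi)m(\bar\pi,\omega^\vee)\,d\pi$, which factors through $\theta_f$, and Peter--Weyl together with the continuity in $\theta_K$ furnished by Proposition \ref{convergence} handles general $\theta_K$. The second stage, however, rests on a false claim. The quasi-character $\theta_f$ of a strongly cuspidal function is \emph{not} supported on the elliptic locus: at a non-elliptic regular point $x$ its value is given (up to normalization) by the weighted orbital integral $J_{M(x)}(x,f)$, which is generically nonzero --- strong cuspidality kills the ordinary orbital integrals along proper parabolics, not the weighted ones. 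For instance, for $G=\SL_2(\BR)$ a discrete-series matrix coefficient is strongly cuspidal and its $\theta_f$ is a multiple of the Harish-Chandra character, nonzero on the split torus. Hence there is no subspace $QC_{ell}(G)$ containing the image of $f\mapsto\theta_f$, and Section 4.4 of \cite{B15} provides no topological splitting of $QC(G)$ into an elliptic piece and a non-elliptic complement on which one could ``extend by zero.'' Your third stage, which tries to justify continuity of that extension by a limiting argument, therefore has nothing to attach to.

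The paper avoids all of this by writing $J$ in closed form:
$$J(\theta,\theta_K)=\sum_{\pi\in\CX_{ell}(G),\,\tau\in\CX_{ell}(K)}D(\pi)D(\tau)\theta_K(\tau)\bigl(m(\bar\pi,\bar\tau)-m_{geom}(\bar\pi,\bar\tau)\bigr)\int_{\Gamma_{ell}(G)}D^G(x)\theta(x)\theta_\pi(x)\,dx.$$
This is defined for every $\theta\in QC(G)$, visibly depends on $\theta$ only through its restriction to regular elliptic elements (whence the support statement), and is continuous by the estimates quoted from \cite{B15}. That it equals $I(f,\theta_K)-I_{geom}(f,\theta_K)$ for $\theta=\theta_f$ follows by expanding both $I$ and $I_{geom}$ spectrally over $\CX(G)$: the induced (non-elliptic) part of the spectrum contributes $m(\bar\pi,\omega^\vee)-m_{geom}(\bar\pi,\omega^\vee)=0$ because the multiplicity formula is already known for induced representations, and on the elliptic part $\theta_f(\pi)$ is recovered as the elliptic inner product of $\theta_f$ with $\theta_\pi$. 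So the inductive hypothesis is used to cancel the non-elliptic spectral contribution, not to legitimize an extension by zero; as written, your construction of $J$ does not close.
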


\begin{proof}
The proof is exactly the same as Proposition 11.5.1 of \cite{B15}. By the spectral expansion proved in the previous section and the fact that the multiplicity formula holds for all induced representations, we can just define $J(\theta,\theta_K)$ to be
$$J(\theta,\theta_K)=\sum_{\pi\in \CX_{ell}(G),\tau\in \CX_{ell}(K)} D(\pi)D(\tau)\theta_K(\tau)(m(\bar{\pi},\bar{\tau})-m_{geom}(\bar{\pi},\bar{\tau}))$$
$$\cdot \int_{\Gamma_{ell}(G)} D^G(x)\theta(x)\theta_\pi(x)dx$$
where $\Gamma_{ell}(G)$ is the set of regular elliptic conjugacy classes of $G$ and $\theta_K(\tau)=\tr(\tau(\theta_K))$. It is clear that $J$ is supported on $G_{ell}\times K$. By (2.7.2), Proposition 4.8.1 and Corollary 5.7.2 of \cite{B15} we know that $J$ is continuous. This proves the proposition.
\end{proof}

Combining the proposition above with Corollary 5.7.2 of \cite{B15}, we have the following corollary.

\begin{cor}
It is enough to prove the geometric expansion when $f$ is compactly supported and $Supp(f)$ is contained in $\Omega_{x}^{G}$ where $\Omega_x$ is a small $G$-good open neighborhood of $G_x$ for some $x\in G_{ell}$.
\end{cor}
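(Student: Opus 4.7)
The plan is to combine Proposition~\ref{geom prop 0} with the localization of strongly cuspidal functions furnished by \cite[Corollary~5.7.2]{B15}. By Proposition~\ref{geom prop 0}, the difference $I(f,\theta_K)-I_{geom}(f,\theta_K)$ equals $J(\theta_f,\theta_K)$ for a continuous bilinear form $J$ on $QC(G)\times C^\infty(K)^K$ that vanishes whenever its first argument is supported away from $G_{ell}$. Hence the geometric expansion is equivalent to showing $J(\theta_f,\theta_K)=0$ for all $f\in \CC_{scusp}(G/A_G^\circ)$.

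First I would use continuity of both sides of the geometric expansion as functionals of $f\in \CC_{scusp}(G/A_G^\circ)$ (which factors through the continuous map $f\mapsto \theta_f$), together with the density of $C_{c,scusp}^\infty(G/A_G^\circ)$ in $\CC_{scusp}(G/A_G^\circ)$, to reduce to the case of compactly supported strongly cuspidal $f$. Next, by the partition-of-unity statement for strongly cuspidal functions \cite[Corollary~5.7.2]{B15}, such an $f$ can be decomposed as a finite sum $f=\sum_i f_i$, where each $f_i\in C_{c,scusp}^\infty(G/A_G^\circ)$ has $\mathrm{Supp}(f_i)\subset \Omega_{x_i}^G$ for some $x_i\in G_{ss}$ and some small $G$-good open neighborhood $\Omega_{x_i}$ of $x_i$ in $G_{x_i}$. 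By linearity of $J$ in the first argument, it suffices to verify $J(\theta_{f_i},\theta_K)=0$ for each piece.

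Finally, for the pieces with $x_i\in G_{ss}\setminus G_{ell}$, after shrinking $\Omega_{x_i}$ if necessary, the $G$-invariant set $\Omega_{x_i}^G$ is disjoint from $G_{ell}$; since $\mathrm{Supp}(\theta_{f_i})\subset \Omega_{x_i}^G$, the vanishing of $J$ off $G_{ell}\times K$ forces $J(\theta_{f_i},\theta_K)=0$ automatically. The only remaining pieces are those with $x_i\in G_{ell}$, and these are exactly of the form required by the statement of the corollary. I do not anticipate any serious obstacle here: the real content is already packaged into Proposition~\ref{geom prop 0} (which in turn used the spectral expansion together with the inductive validity of the multiplicity formula for proper Levi subgroups) and into the Beuzart-Plessis localization \cite[Corollary~5.7.2]{B15}; the present corollary is the clean assembly of these two inputs.
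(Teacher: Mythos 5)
Your overall architecture matches the paper's (the paper's entire justification for this corollary is ``combine Proposition~\ref{geom prop 0} with Corollary~5.7.2 of \cite{B15}''), and two of your observations are correct and essential: $J$ only sees the restriction of $\theta_f$ to the regular elliptic set, and a small $G$-good invariant neighborhood of a non-elliptic semisimple point is disjoint from $G_{ell,reg}$. The gap is in the middle step: there is no ``partition-of-unity statement for strongly cuspidal functions''. If $\varphi$ is a smooth conjugation-invariant cutoff, the product $\varphi f$ is in general no longer strongly cuspidal: strong cuspidality requires $\int_U f(mu)\,du=0$ for every proper parabolic $P=MU$, and $\varphi(mu)$ genuinely depends on $u$ (already $\varphi(u)\neq\varphi(1)$ for $u$ unipotent), so $\int_U \varphi(mu)f(mu)\,du$ need not vanish. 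Corollary~5.7.2 of \cite{B15} is not a decomposition statement for functions, so the asserted splitting $f=\sum_i f_i$ with each $f_i\in C_{c,scusp}^{\infty}$ supported in some $\Omega_{x_i}^{G}$ is unjustified, and I do not believe such a splitting exists at the level of functions.

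The repair is to localize the quasi-character rather than the function, which is what the citation of Corollary~5.7.2 of \cite{B15} is actually for. Since $Z_G$ is assumed trivial in this part of the paper, $G_{ell}$ is compact modulo conjugation, hence covered by finitely many $\Omega_{x_i}^{G}$ with $x_i\in G_{ell}$. Choose a smooth invariant partition of unity $1=\varphi_0+\sum_i \varphi_i$ near $Supp(\theta_f)$ with $Supp(\varphi_i)\subset \Omega_{x_i}^{G}$ and $Supp(\varphi_0)\cap G_{ell}=\emptyset$; each $\varphi_i\theta_f$ lies in $QC_c(\Omega_{x_i}^{G})$ (multiplication by smooth invariant functions does preserve quasi-characters, unlike strong cuspidality). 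Corollary~5.7.2 of \cite{B15} then furnishes $f_i\in C_{c,scusp}^{\infty}(\Omega_{x_i}^{G})$ whose quasi-character agrees with $\varphi_i\theta_f$ on $G_{ell,reg}$ (or approximates it, in which case the continuity of $J$ from Proposition~\ref{geom prop 0} finishes the limit). Since $J$ depends only on the restriction to $G_{ell,reg}$ and $\varphi_0\theta_f$ vanishes there, $J(\theta_f,\theta_K)=\sum_i J(\theta_{f_i},\theta_K)$, which reduces the geometric expansion exactly to functions of the form demanded by the corollary; this also renders your preliminary density reduction to compactly supported $f$ unnecessary.
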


\begin{prop}\label{geom prop 1}
The geometric expansion holds when $1\notin Supp(\theta_f)$.
\end{prop}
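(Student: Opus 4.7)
The plan is to prove $J(\theta_f,\theta_K)=0$ when $1\notin Supp(\theta_f)$, which by Proposition \ref{geom prop 0} is equivalent to the desired geometric expansion $I(f,\theta_K)=I_{geom}(f,\theta_K)$. By the corollary following that proposition, I may assume $f\in \CC_{scusp}(G)$ is compactly supported with $Supp(f)\subset \Omega_x^G$ for some $x\in G_{ell}$ and some sufficiently small $G$-good open neighborhood $\Omega_x$ of $x$ in $G_x$. The hypothesis $1\notin Supp(\theta_f)$ forces $x\neq 1$; combined with the running assumption that $G$ has trivial center (so every non-identity semisimple element is non-central), this gives $\dim G_x<\dim G$, and the inductive hypothesis that the trace formula holds in strictly smaller dimension applies to the pair $(G_x,K\cap G_x)$.

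The strategy is a simultaneous semisimple descent of both sides to $(G_x,K\cap G_x)$. On the geometric side, the descent is already handed to us by Corollary \ref{geometric multiplicity descent}(1), which reads
\[
I_{geom}(f,\theta_K)=|Z_K(x):K\cap G_x|^{-1}\cdot m_{geom,G_x,K\cap G_x}\bigl(\eta_{x,G}^{-1/2}(\theta_f)_{x,\Omega_x},\;\eta_{x,K}\theta_K|_{K\cap G_x}\bigr).
\]
For the analytic side, I would construct the usual semisimple descent $f_x\in C_c^\infty(\Omega_x)$ of $f$. By the Archimedean analogue of Lemma 5.9.1 of \cite{B15}, $f_x$ is strongly cuspidal on $G_x$ and its associated quasi-character equals $\eta_{x,G}^{-1/2}(\theta_f)_{x,\Omega_x}$. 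Using the $G$-good property of $\Omega_x$ (so that $g^{-1}kg\in\Omega_x^G$ forces $k$, after a $G$-conjugate, into a small neighborhood of the elliptic element $x$ in $K$) together with a change of variables $G\to (K\cap G_x)\backslash G_x\times(\text{transversal to }G_x\text{ in }G)$ in the double integral defining $I(f,\theta_K)$, I expect to obtain the analytic descent identity
\[
I(f,\theta_K)=|Z_K(x):K\cap G_x|^{-1}\cdot I^{G_x}\bigl(f_x,\;\eta_{x,K}\theta_K|_{K\cap G_x}\bigr),
\]
where $I^{G_x}$ denotes the analogue of our distribution $I$ for the pair $(G_x,K\cap G_x)$. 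Granted this, the inductive hypothesis applied to $(G_x,K\cap G_x)$ gives $I^{G_x}(f_x,\eta_{x,K}\theta_K|_{K\cap G_x})=I^{G_x}_{geom}(f_x,\eta_{x,K}\theta_K|_{K\cap G_x})$, and matching with the geometric descent formula closes the argument.

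The main obstacle will be the analytic descent identity itself, since the geometric side descends cleanly because $m_{geom}$ is built from local objects (regular germs) whereas $I$ is a global double integral. I must carefully track (i) how the $G$-good neighborhood localizes the inner $K$-integration against $\theta_K$ to a neighborhood of the semisimple elliptic $x$ in $K$, (ii) the Jacobian produced on the $K$-side, which should be $\eta_{x,K}(k)=D^K(k)D^{K_x}(k)^{-1}$ (matching $\eta_{x,G}$ on the $G$-side up to the usual square root present in the definition of $m_{geom}$), and (iii) the combinatorial factor $|Z_K(x):K\cap G_x|$, which accounts for the passage from $K$-conjugation in a neighborhood of $x$ to $K_x$-conjugation in $K\cap G_x$. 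This parallels the Waldspurger--Beuzart-Plessis descent argument in Section 11 of \cite{B15} for the GGP model, which serves as the direct template.
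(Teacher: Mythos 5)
Your proposal is correct and follows essentially the same route as the paper: reduce to $f$ supported in $\Omega_x^G$ for an elliptic $x\neq 1$, descend the geometric side via Corollary \ref{geometric multiplicity descent}(1) together with the descent of strongly cuspidal functions (Proposition 5.7.1 of \cite{B15}), descend the analytic double integral to $I^{G_x}(f_x,\eta_{x,K}\theta_K|_{K\cap G_x})$ by the same change of variables, and conclude by the inductive hypothesis for $(G_x,K\cap G_x)$. The only cosmetic difference is bookkeeping of the constant $|Z_K(x):K\cap G_x|^{-1}$, which the paper cancels against $|Z_G(x)/G_x|$ and $\int_{G_x\backslash G}\alpha(g)\,dg$ while you carry it symmetrically on both sides.
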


\begin{proof}
The proposition follows from the same semisimple descent argument as in Section 11.6 of \cite{B15}. To be specific, we may assume that the test function $f$ is of the form as in the Corollary above with $1\neq x\in G_{ell}$. By the proposition above, we may also assume that $f$ comes from a strongly cuspidal function $f_x$ of $G_x$ supported on $\Omega_x$ under the map in Proposition 5.7.1 of \cite{B15}. 

By Proposition 5.7.1 of \cite{B15} and Corollary \ref{geometric multiplicity descent}(1), we know that (note that $|Z_K(x)/K\cap G_x|=|Z_G(x)/G_x|$)
$$I_{geom}(f,\theta_K)=I_{geom,x}((\eta_{x,G})^{-1/2}f_{x,\Omega_x},\eta_{x,K}\theta_K|_{K\cap G_x})$$
where $I_{geom,x}$ is the analogue of $I_{geom}$ for the model $(G_x,K\cap G_x)$. Here we have used the identity $I_{geom,x}(f_x,\theta_K|_{K\cap G_x})=I_{geom,x}({}^zf_x,\theta_K|_{K\cap G_x})$ for all $z\in Z_G(x)$ and $f_x\in \CC_{scusp}(G_x)$ which is due to the facts that $\theta_K$ is invariant under $K$-conjugation and every element in $Z_G(x)$ can be written as $z=z_1z_2$ with $z_1\in Z_K(x),\; z_2\in G_x$ (note that ${}^zf_x(g):=f_x(z^{-1}gz)$). Hence in order to prove the proposition, it is enough to show that 
$$I(f,\theta_K)=I_{x}((\eta_{x,G})^{-1/2}f_{x,\Omega_x},\eta_{x,K}\theta_K|_{K\cap G_x})$$
where $I_{x}$ is the analogue of $I$ for the model $(G_x,K\cap G_x)$.

To prove this, by choosing $\Omega_x$ small, we have
$$I(f,\theta_K)=\int_{K\back G}\int_K {}^gf(k)\theta_K(k)dkdg$$
$$=|Z_K(x)/K\cap G_x|^{-1}\int_{K\back G}\int_{(K\cap G_x)\back K}\int_{K\cap G_x} \eta_{x,K}(k_x) {}^{kg}f(k_x)\theta_K(k_x)dk_xdkdg$$
$$=|Z_K(x)/K\cap G_x|^{-1}\int_{K\back G}\int_{(K\cap G_x)\back K}\int_{K\cap G_x} \eta_{x,K}(k_x)\eta_{x,G}(k_x)^{-1/2} ({}^{kg}f)_{x,\Omega_x}(k_x)\theta_K(k_x)dk_xdkdg$$
$$=|Z_K(x)/K\cap G_x|^{-1}\int_{G_x\back G}\int_{(K\cap G_x)\back G_x}\int_{K\cap G_x} \eta_{x,K}(k_x)\eta_{x,G}(k_x)^{-1/2} ({}^{g}f)_{x,\Omega_x}(g_{x}^{-1}k_xg_x)\theta_K(k_x)dk_xdg_xdg.$$
Here the equation on the second line follows from (3.2.5) of \cite{B15} and the equation on the third line follows from the definition of the function $f_{x,\Omega_x}$ in Section 3.2 of \cite{B15}.

Introduce a function $\alpha$ on $Z_G(x)\back G$ as in Proposition 5.7.1 of \cite{B15}. Then Proposition 5.7.1 of \cite{B15} implies that for all $g\in G_x\back G$, there exists $z\in Z_G(x)$ such that 
$$\int_{(K\cap G_x)\back G_x}\int_{K\cap G_x} \eta_{x,K}(k_x)\eta_{x,G}(k_x)^{-1/2} ({}^{g}f)_{x,\Omega_x}(g_{x}^{-1}k_xg_x)\theta_K(k_x)dk_xdg_x$$
is equal to 
$$\alpha(g)I_{x}((\eta_{x,G})^{-1/2}\cdot {}^zf_x,\eta_{x,K}\theta_K|_{K\cap G_x})=\alpha(g)I_{x}((\eta_{x,G})^{-1/2}f_x,\eta_{x,K}\theta_K|_{K\cap G_x})$$
where the last equation again follows from the fact that every element in $Z_G(x)$ can be written as $z=z_1z_2$ with $z_1\in Z_K(x)$ and $z_2\in G_x$. This implies that
$$I(f,\theta_K)=|Z_K(x)/K\cap G_x|^{-1}\cdot I_{x}((\eta_{x,G})^{-1/2}f_{x,\Omega_x},\eta_{x,K}\theta_K|_{K\cap G_x})\cdot \int_{G_x\back G}\alpha(g)dg$$
$$=I_{x}((\eta_{x,G})^{-1/2}f_{x,\Omega_x},\eta_{x,K}\theta_K|_{K\cap G_x}).$$
This finishes the proof of the proposition.
\end{proof}

\subsection{A special case}

Throughout this subsection, we will use $f$ (resp. $F$) to denote test function on the group (resp. Lie algebra). For $f\in \CC_{scusp}(G)$ and $\theta_K\in C^\infty(K)^K$ (resp. $F\in \CS_{scusp}(\Fg)$ and $\theta_K\in C^\infty(\Fk)^K$), we have defined 
$$I(f,x,\theta_K)=\int_{K} f(x^{-1}kx)\theta_K(k)dk,\;I(f,\theta_K)=\int_{K\back G} I(f,x,\theta_K)dx,$$
$$I_{geom}(f,\theta_K)=m_{geom,G,K}(\theta_f,\theta_K),\;I_{Lie}(F,x,\theta_K)=\int_{\Fk} F(x^{-1}Xx)\theta_K(X)dX,$$
$$I_{Lie}(F,\theta_K)=\int_{K\back G}I_{Lie}(F,x,\theta_K)dx,\;I_{geom,Lie}(F,\theta_K)=m_{geom,G,K,Lie}(\theta_F,\theta_K).$$
For $\theta\in QC(G)$ (resp. $\theta\in QC_c(\Fg)$), let
$$I_{geom}(\theta,\theta_K)=m_{geom,G,K}(\theta,\theta_K),\;I_{geom,Lie}(\theta,\theta_K)=m_{geom,G,K,Lie}(\theta,\theta_K).$$
If $\theta_K$ is the identity function, we will omit $\theta_K$ in the expression, e.g. we will use $I(f)$ to denote $I(f,\theta_K)$ when $\theta_K\equiv 1$.

Let $\omega'\subset \omega\subset \Fg$ be two convex G-excellent open neighborhoods of $0$ and let $\Omega=\exp(\omega),\;\Omega'=\exp(\omega')$. By choosing $\omega'$ small, we can find a function $\theta_{K,0}\in C^\infty(K)^K$ such that
\begin{itemize}
\item $\theta_{K,0}$ is supported on $\Omega\cap K$;
\item the restriction of the function $j_{K}^{1/2}\cdot (\theta_{K,0})_{\omega}$ to $\omega'$ is the identity function. 
\end{itemize}

\begin{lem}\label{lem descent}
For $f\in \CS_{scusp}(\Omega)$ and $\theta_K\in C^\infty(K)^K$ with $Supp(\theta_K)\in \Omega$, we have
$$I(f,\theta_{K})=I_{Lie}(j_{G}^{-1/2}f_{\omega},j_{K}^{1/2}\cdot \theta_{K,\omega}),\;I_{geom}(f,\theta_{K})=I_{Lie,geom}(j_{G}^{-1/2}f_{\omega'},j_{K}^{1/2}\cdot \theta_{K,\omega}).$$
\end{lem}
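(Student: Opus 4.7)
The plan is to prove the two identities separately: the first by unfolding the definition and changing variables to the Lie algebra, and the second by reducing to Corollary~\ref{geometric multiplicity descent}(3) together with the compatibility of semisimple descent with the quasi-character construction for strongly cuspidal functions.

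For the first identity, I would begin from
$$I(f,\theta_K)=\int_{K\back G}\int_K f(x^{-1}kx)\theta_K(k)\,dk\,dx.$$
Since $f$ and $\theta_K$ are supported in $\Omega=\exp(\omega)$ and $\Omega\cap K=\exp(\omega\cap\Fk)$ respectively, the inner integral may be restricted to $k=\exp(Y)$ with $Y\in\omega\cap\Fk$. The Jacobian of $\exp\colon\Fk\to K$ at $Y$ coincides with $j_K(Y)$ on the regular locus (by the standard power-series calculation, extending by continuity to the centralizer where it is $1$), and $x^{-1}\exp(Y)x=\exp(\Ad(x^{-1})Y)$. Invoking the $G$-invariance of $j_G$ and the descent conventions of Section~3.3 of \cite{B15} for $f_\omega$ and $\theta_{K,\omega}$, the group-side integrand transforms into $(j_G^{-1/2}f_\omega)(\Ad(x^{-1})Y)\cdot(j_K^{1/2}\theta_{K,\omega})(Y)$. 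Integration over $K\back G$ then yields the first identity.

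For the second identity, I would apply Corollary~\ref{geometric multiplicity descent}(3) to the quasi-character $\theta_f\in QC_c(\Omega)$, yielding
$$I_{geom}(f,\theta_K)=m_{geom,G,K,Lie}\bigl(j_G^{-1/2}(\theta_f)_\omega,\,j_K^{1/2}\theta_{K,\omega}\bigr).$$
The remaining step is to identify $j_G^{-1/2}(\theta_f)_\omega$ with $\theta_{j_G^{-1/2}f_{\omega'}}$ inside $m_{geom,G,K,Lie}$. The compatibility of semisimple descent with the quasi-character construction on strongly cuspidal functions (the Lie algebra analogue of Proposition~5.7.1 of \cite{B15}) gives $\theta_{f_{\omega'}}=(\theta_f)_{\omega'}$, and, since $j_G$ is $G$-invariant, $\theta_{j_G^{-1/2}f_{\omega'}}=j_G^{-1/2}(\theta_f)_{\omega'}$, which agrees with $j_G^{-1/2}(\theta_f)_\omega$ on $\omega'$. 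Since the germs $c_\theta(X)$ appearing in the integrand depend only on $\theta$ near $X$, and since $\omega'$ is chosen small enough that the effective support of the integral on $\CS_{Lie}(G,K)$ lies in $\omega'$, replacing $(\theta_f)_\omega$ by $(\theta_f)_{\omega'}$ leaves $m_{geom,G,K,Lie}$ unchanged.

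The hard part will be the bookkeeping of Jacobian conventions: the exact normalizations for descending functions, measures, and quasi-characters must line up with the $\exp$-Jacobians so that the stated factors $j_G^{\pm 1/2}$ and $j_K^{\pm 1/2}$ appear correctly. The key inputs are the identification of the $\exp$-Jacobian with the ratios $j_G=D^G(\exp)/D^G$ and $j_K=D^K(\exp)/D^K$, the $G$-invariance of $j_G$, and the compatibility of strongly cuspidal descent with the map $f\mapsto\theta_f$. Once these are fixed, the first identity is a direct change of variables and the second reduces to Corollary~\ref{geometric multiplicity descent}(3).
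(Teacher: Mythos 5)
Your proof is correct and follows essentially the same route as the paper: the first identity via the change of variables $k=\exp(Y)$ with the $j_K^{1/2}$, $j_G^{-1/2}$ Jacobian bookkeeping (this is exactly the paper's appeal to (3.3.2) of \cite{B15}), and the second via Corollary~\ref{geometric multiplicity descent}(3) combined with the compatibility $\theta_{f_\omega}=(\theta_f)_\omega$ of descent with the map $f\mapsto\theta_f$, which the paper leaves implicit in the phrase ``direct consequence.'' Your treatment of the $\omega$ versus $\omega'$ discrepancy is a sensible reading of what appears to be a typo in the statement, since $f$ is supported in $\exp(\omega)$ and only the germs near the support of the integral matter.
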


\begin{proof}
The second identity is a direct consequence of the Corollary \ref{geometric multiplicity descent}. For the first equation, by (3.3.2) of \cite{B15}, we have
$$I(f,x,\theta_{K})=\int_K f(x^{-1}kx)\theta_{K}(k)dk=\int_{\omega\cap \Fk}j_{K}^{1/2}(X) f(x^{-1}\exp(X)x)\theta_{K,\omega}(X)dX$$
$$=\int_{\Fk}j_{K}^{1/2}(X)j_{G}^{-1/2}(X) f_{\omega}(x^{-1}Xx)\theta_{K,\omega}(X)dX=I_{Lie}(j_{G}^{-1/2}f_{\omega'},j_{K}^{1/2}\cdot \theta_{K,\omega},x).$$
This proves the lemma.
\end{proof}

\begin{prop}\label{geom prop 2}
\begin{enumerate}
\item There exists a continuous linear form $J_{Lie}$ on $QC_c(\omega')$ such that 
$$I_{Lie}(F)=J_{Lie}(\theta_F)$$
for all $F\in \CS_{scusp}(\omega')$. 
\item We have $J_{Lie}(\theta)=I_{geom,Lie}(\theta)$ for all $\theta\in QC_c(\omega')$ with $0\notin Supp(f).$
\item We have $J_{Lie}(\theta_{\lambda})=|\lambda|^{\dim(K)} J_{Lie}(\theta)$ for all $\theta\in QC_c(\omega')$ and $\lambda\in \BR^\times$ with $\theta_{\lambda}\in QC_c(\omega')$.
\end{enumerate}
\end{prop}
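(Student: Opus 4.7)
For part (1), the plan is to take the formula produced by Theorem \ref{thm Lie algebra} as the definition of $J_{Lie}$, namely
\[
J_{Lie}(\theta) := \frac{1}{c(G,K,\BR)\cdot |W(T)|}\int_{\Ft} D^G(X)^{1/2}\hat\theta(X)\,dX,\qquad \theta\in QC_c(\omega'),
\]
where $\hat\theta\in SQC(\Fg)$ is the Fourier transform of $\theta$ as defined in Section \ref{section quasi-character}. Since $\theta$ is compactly supported modulo conjugation, $\hat\theta$ is a Schwartz quasi-character, so $X\mapsto D^G(X)^{1/2}\hat\theta(X)$ is rapidly decreasing on the Cartan subspace $\Ft$ and the integral converges absolutely. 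Continuity of $J_{Lie}$ on $QC_c(\omega')$ then reduces to the continuity of the Fourier transform $QC_c(\omega')\to SQC(\Fg)$. The identity $I_{Lie}(F)=J_{Lie}(\theta_F)$ for $F\in \CS_{scusp}(\omega')$ is immediate from Theorem \ref{thm Lie algebra} combined with $\widehat{\theta_F}=\theta_{\hat F}$.

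For part (2), the plan is to transfer the statement to the group via Lemma \ref{lem descent} and invoke Proposition \ref{geom prop 1}. By continuity of both sides on $QC_c(\omega')$, it is enough to prove the identity on a dense subset, in particular for $\theta=\theta_F$ with $F\in \CS_{scusp}(\omega')$ and $0\notin Supp(F)$. Given such an $F$, define $f\in \CS_{scusp}(\Omega')$ by $G$-invariantly extending the function $X\in\omega'\mapsto j_G(X)^{1/2}F(X)$ through the exponential map; then $j_G^{-1/2}f_\omega=F$ on $\omega$ and, crucially, $1\notin Supp(\theta_f)$. Applying Lemma \ref{lem descent} with $\theta_K=\theta_{K,0}$ (so that $j_K^{1/2}(\theta_{K,0})_\omega\equiv 1$ on $\omega'\supset Supp(F)$) yields
\[
I(f,\theta_{K,0})=I_{Lie}(F)=J_{Lie}(\theta_F),\qquad I_{geom}(f,\theta_{K,0})=I_{geom,Lie}(\theta_F).
\]
Proposition \ref{geom prop 1} then provides $I(f,\theta_{K,0})=I_{geom}(f,\theta_{K,0})$, giving the identity for $\theta_F$.

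For part (3), this is a direct change of variables on the Fourier side. Using the standard Fourier scaling $\widehat{\theta_\lambda}(X)=|\lambda|^{\dim\Fg}\hat\theta(\lambda X)$, the substitution $Y=\lambda X$ in the integral over $\Ft$, and the homogeneity $D^G(Y/\lambda)^{1/2}=|\lambda|^{-(\dim\Fg-\dim\Ft)/2}D^G(Y)^{1/2}$ on the regular locus, one obtains
\[
J_{Lie}(\theta_\lambda)=|\lambda|^{\dim\Fg-\dim\Ft-(\dim\Fg-\dim\Ft)/2}\,J_{Lie}(\theta)=|\lambda|^{\delta(G)/2}\,J_{Lie}(\theta).
\]
For split $G$ with trivial center, the explicit basis of $\Fk$ constructed in the proof of Lemma \ref{Cartan decomposition} gives $\dim K=|\Sigma|=\delta(G)/2$, so the exponent on $|\lambda|$ is indeed $\dim K$.

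The main obstacle I expect is the descent step in part (2): one must verify that the local reduction of Lemma \ref{lem descent} really transports the condition $0\notin Supp(\theta)$ to $1\notin Supp(\theta_f)$, and justify the density passage from cuspidal quasi-characters $\theta_F$ with $0\notin Supp(F)$ to arbitrary $\theta\in QC_c(\omega')$ with $0\notin Supp(\theta)$ in a topology compatible with the continuous linear forms $J_{Lie}$ and $I_{geom,Lie}$. The essential content, the geometric expansion on the group side when $1\notin Supp(\theta_f)$, is supplied by Proposition \ref{geom prop 1}, whose proof itself rests on the inductive hypothesis that the trace formula holds on all proper centralizers.
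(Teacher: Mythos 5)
Your proposal has a genuine gap in part (1), and it propagates to part (3). You define $J_{Lie}$ by the explicit formula of Theorem \ref{thm Lie algebra},
$$J_{Lie}(\theta)=\frac{1}{c(G,K,\BR)\cdot |W(T)|}\int_{\Ft} D^G(X)^{1/2}\hat\theta(X)\,dX,$$
but that theorem is stated and proved only for \emph{split} $G$ (Section \ref{sec Lie algebra} opens with the hypothesis that $G$ is split with trivial center; the maximal abelian subspace $\Ft\subset\Fp$ is the Lie algebra of a maximal split torus only in that case). Proposition \ref{geom prop 2}, however, must hold for general $G$: the very next proposition in Section 6.2 splits into the cases ``$G$ not split'' and ``$G$ split'' and uses parts (2) and (3) of Proposition \ref{geom prop 2} in both. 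Your own computation in part (3) confirms the obstruction: you obtain homogeneity of degree $\delta(G)/2$, which equals $\dim(K)$ only when $G$ is split; for non-split $G$ one has $\dim(K)>\delta(G)/2$, so no formula of this shape could produce the required exponent. The paper instead constructs $J_{Lie}$ without any explicit formula: Proposition \ref{geom prop 0} (a consequence of the spectral expansion and the inductive hypothesis) gives a continuous linear form $J'$ on $QC(G)$ with $J'(\theta_f)=I(f,\theta_{K,0})$, and one sets $J_{Lie}(\theta)=J'(j_G^{1/2}\theta_\Omega)$ using Lemma \ref{lem descent}; part (3) then follows from the elementary scaling $I_{Lie}(F_\lambda)=|\lambda|^{\dim(K)}I_{Lie}(F)$ of the defining integral over $\Fk$, after reducing to $\theta=\theta_F$ via Proposition 5.6.1 of \cite{B15}. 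The explicit formula of Theorem \ref{thm Lie algebra} is reserved for a later step, namely pinning down the constant $c$ in the split case.

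Your part (2) is essentially the paper's argument (descent via Lemma \ref{lem descent} plus Proposition \ref{geom prop 1}), though you should phrase the hypothesis in terms of $0\notin Supp(\theta_F)$ rather than $0\notin Supp(F)$, since the invariant closure of a compact set avoiding $0$ need not avoid $0$, and it is the support of the quasi-character that Proposition \ref{geom prop 1} controls. To repair the proposal, replace the construction in part (1) by the descent from Proposition \ref{geom prop 0}, and derive part (3) from the scaling of $I_{Lie}$ itself.
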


\begin{proof}
For the first part, by Proposition \ref{geom prop 0}, there exists a linear form $J'$ on $QC(G)$ such that $J'(\theta_f)=I(f,\theta_{K,0})$ for all $f\in \CC_{scusp}(G)$. Using the above lemma, we just need to let $J_{Lie}(\theta)=J'(j_{G}^{1/2}\cdot \theta_\Omega)$ ($\theta_\Omega\in QC_c(\Omega)$ was defined in Section 3.3 of \cite{B15}). The second part follows from Proposition \ref{geom prop 1}.

For the third part, by Proposition 5.6.1 of \cite{B15}, we only need to prove the equation when $\theta=\theta_F$ for some $F\in \CS_{scusp}(\omega')$. Since $\theta_{\lambda}\in QC_c(\omega')$, we may choose $F$ such that $F_\lambda\in \CS_{scusp}(\omega')$ where $F_\lambda(X)=F(\lambda^{-1}X)$. Then we have $I_{Lie}(F)=J_{Lie}(\theta)$ and $I_{Lie}(F_\lambda)=J_{Lie}(\theta_\lambda)$. Then the equation follows from the fact that $I_{Lie}(F_\lambda)=|\lambda|^{\dim(K)} I_{Lie}(F)$.
\end{proof}

\begin{prop}
\begin{enumerate}
\item If $G$ is not split ($\iff \dim(K)>\delta(G)/2$), then  $J_{Lie}(\theta)=I_{geom,Lie}(\theta)$ for all $\theta\in QC_c(\omega')$.
\item If $G$ is split, there exists $c\in \BC$ such that $J_{Lie}(\theta)-I_{geom,Lie}(\theta)=c\cdot c_\theta(0)$ for all $\theta\in QC_c(\omega')$.
\end{enumerate}
\end{prop}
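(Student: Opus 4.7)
Let $D:=J_{Lie}-I_{geom,Lie}$, a continuous linear form on $QC_c(\omega')$. Two inputs carry over from the previous subsection: by Proposition \ref{geom prop 2}(2), $D(\theta)=0$ whenever $0\notin Supp(\theta)$; and combining Proposition \ref{geom prop 2}(3) with Corollary \ref{geometric multiplicity descent}(2) gives the homogeneity $D(\theta_\lambda)=|\lambda|^{\dim(K)}D(\theta)$. Since $M_{\lambda,l}\theta=|\lambda|^{-l}\theta_\lambda$, this translates to $D(M_{\lambda,l}\theta)=|\lambda|^{\dim(K)-l}D(\theta)$; in particular, taking $l=\dim(K)$,
$$D\bigl((M_{\lambda,\dim(K)}-1)^{d}\theta\bigr)=0$$
for every admissible $\theta$, $\lambda$, $d$. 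All of part (1) and part (2) will be read off from this single identity together with Lemma \ref{lemma quasi-character homogeneous}.

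For part (1), $\dim(K)>\delta(G)/2$. Since $\omega'$ is convex and $G$-invariant, the Remark after Lemma \ref{lemma quasi-character homogeneous} allows me to apply part (2) of that Lemma (with $l=\dim(K)$ and any fixed $\lambda\in(0,1)$) inside $QC_c(\omega')$: every $\theta\in QC_c(\omega')$ is of the form $(M_{\lambda,\dim(K)}-1)^d\theta_1+\theta_2$ with $\theta_1\in QC_c(\omega')$ and $0\notin Supp(\theta_2)$. The first summand is killed by the displayed identity, the second by the support property, so $D(\theta)=0$.

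Part (2) begins the same way: now $\dim(K)=\delta(G)/2$, and Lemma \ref{lemma quasi-character homogeneous}(1) provides the identical decomposition, but only under the extra hypothesis that $c_{\theta,\CO}(0)=0$ for every $\CO\in Nil_{reg}(\Fg)$. Hence $D(\theta)$ depends only on the finite family $\bigl(c_{\theta,\CO}(0)\bigr)_{\CO\in Nil_{reg}(\Fg)}$, and by continuity and linearity there exist constants $a_\CO\in\BC$ with
$$D(\theta)=\sum_{\CO\in Nil_{reg}(\Fg)}a_\CO\cdot c_{\theta,\CO}(0).$$

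It remains to show that $a_\CO$ is independent of $\CO$; this is where Lemma \ref{lemma nilpotent orbit} enters. For any two orbits $\CO_1,\CO_2\in Nil_{reg}(\Fg)$, the Lemma together with its Remark produces $g\in Res_{\BC/\BR}G$ normalizing both $G$ and $K$ and sending $\CO_1$ to $\CO_2$. One checks that $D$ is invariant under $\theta\mapsto{}^g\theta$, where ${}^g\theta(X)=\theta(g^{-1}Xg)$: the invariance of $I_{geom,Lie}$ under this outer conjugation is exactly the fact exploited in the split case inside the proof of Proposition \ref{prop geometric side convergence}, while the invariance of $J_{Lie}$ is obtained by change of variables in the defining integral $I_{Lie}(F)=\int_{K\backslash G}\int_\Fk F(x^{-1}Xx)dX\,dx$, using that $g$-conjugation preserves $G$, $\Fk$, and the relevant Haar measures. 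Since the regular germs transform as $c_{{}^g\theta,\CO}(0)=c_{\theta,Ad(g^{-1})\CO}(0)$, this invariance forces $a_{\CO_1}=a_{\CO_2}$. Writing $a$ for the common value and setting $c:=|Nil_{reg}(\Fg)|\cdot a$, one obtains $D(\theta)=c\cdot c_\theta(0)$. The main obstacle I anticipate is the bookkeeping behind the $g$-invariance of $I_{Lie}$: one has to check that the Jacobians of $Ad(g)$ on $\Fg$, on $\Fk$, and on $K\backslash G$ compensate so that no extra scalar appears.
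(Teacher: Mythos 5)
Your proposal is correct and follows essentially the same route as the paper: the decomposition $\theta=(M_{\lambda,\dim(K)}-1)^d\theta_1+\theta_2$ from Lemma \ref{lemma quasi-character homogeneous} (valid in $QC_c(\omega')$ by the remark on convex neighborhoods), the vanishing of $D=J_{Lie}-I_{geom,Lie}$ on both pieces via homogeneity and the away-from-zero case, and then Lemma \ref{lemma nilpotent orbit} with the $g$-conjugation invariance of both linear forms to equalize the coefficients $a_{\CO}$. The Jacobian bookkeeping you flag is harmless, since $\mathrm{Ad}(g)$ preserves the Killing form and stabilizes $K$, hence all the relevant measures.
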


\begin{proof}
If $G$ is not split, by Lemma \ref{lemma quasi-character homogeneous}, there exists $\theta_1,\theta_2\in QC_c(\omega')$ such that $\theta=(M_{\lambda,\dim(K)}-1)\theta_1+\theta_2$ with $0\notin Supp(\theta_2)$. By the corollary above, we have $J_{Lie}(\theta_2)=I_{geom,Lie}(\theta_2)$ and $J_{Lie}((M_{\lambda,\dim(K)}-1)\theta_1)=0$. By Corollary \ref{geometric multiplicity descent}, we have $I_{geom,Lie}((M_{\lambda,\dim(K)}-1)\theta_1)=0$. This proves the first part.

If $G$ is split, by the same argument as in the non-split case we know that that there exists constants $c_{\CO}$ for $\CO\in Nil_{reg}(\Fg)$ such that
$$J_{Lie}(\theta)-I_{geom,Lie}(\theta)=\sum_{\CO\in  Nil_{reg}(\Fg)}c_\CO\cdot c_{\theta,\CO}(0)$$ 
for all $\theta\in QC_c(\omega')$. For any $\CO_1,\CO_2\in Nil_{reg}(\Fg)$, let $g\in Res_{\BC/\BR}G$ be an element as in Lemma \ref{lemma nilpotent orbit}. Since $g^{-1}Kg=K$, the linear forms $J_{Lie}$ and $I_{geom,Lie}$ are invariant under $g$-conjugation. This implies that $c_{\CO_1}=c_{\CO_2}$. This proves the lemma.
\end{proof}

\begin{prop}
Assume that $G$ is split.
\begin{enumerate}
\item There exists a continuous linear form $J_{Lie}$ on $SQC(\Fg)$ such that 
$$I_{Lie}(F)=J_{Lie}(\theta_F)$$
for all $F\in \CS_{scusp}(\Fg)$. 
\item We have $J_{Lie}(\theta_{\lambda})=|\lambda|^{\dim(K)} J_{Lie}(\theta)$ for all $\lambda\in \BR^\times$ and $\theta\in SQC(\Fg)$.
\item We can extend the linear form $I_{geom,Lie}(\theta)$ on $QC_c(\Fg)$ to $SQC(\Fg)$.
\item We have $J_{Lie}(\theta)=I_{geom,Lie}(\theta)$ for all $\theta\in SQC(\Fg)$ with $0\notin Supp(\theta).$
\item There exists $c\in \BC$ such that $J_{Lie}(\theta)-I_{geom,Lie}(\theta)=c\cdot c_\theta(0)$ for all $\theta\in SQC(\Fg)$.
\end{enumerate}
\end{prop}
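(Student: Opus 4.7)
\textbf{Parts (1) and (2).} The natural candidate, motivated by Theorem \ref{thm Lie algebra}, is
$$J_{Lie}(\theta) := \frac{1}{c(G,K,\BR)\cdot |W(T)|}\int_{\Ft} D^G(X)^{1/2}\hat\theta(X)\,dX,$$
where $\hat\theta$ denotes the Fourier transform on Schwartz quasi-characters defined in Section 5.6 of \cite{B15}. I would check convergence by noting that $D^G(X)^{1/2}\hat\theta(X)$ is the product of a polynomial factor with a rapidly decaying function on $\Ft\cap \Fg_{reg}$, and continuity of $J_{Lie}$ is inherited from that of the Fourier transform on $SQC(\Fg)$. Theorem \ref{thm Lie algebra} directly gives $J_{Lie}(\theta_F)=I_{Lie}(F)$ for $F\in\CS_{scusp}(\Fg)$, establishing Part (1). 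For Part (2), substitute $Y=\lambda X$ in the defining integral; using $\hat{\theta_\lambda}(X)=|\lambda|^{\dim\Fg}\hat\theta(\lambda X)$ together with the polynomial homogeneity $D^G(\lambda^{-1}Y)=|\lambda|^{-\delta(G)}D^G(Y)$ on regular elements of $\Ft$, a change of variables yields $J_{Lie}(\theta_\lambda)=|\lambda|^{\delta(G)/2}J_{Lie}(\theta)$. Since $G$ is split with trivial center, the root space decomposition used in the proof of Lemma \ref{Cartan decomposition} gives $\dim K=\delta(G)/2$, so this equals $|\lambda|^{\dim K}J_{Lie}(\theta)$.

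\textbf{Part (3).} Extend $I_{geom,Lie}$ by the same integral formula already used on $QC_c(\Fg)$. I would establish absolute convergence of $m_{geom,G,K,s,Lie}(\theta)$ for $\frac12>\mathrm{Re}(s)>0$ and existence of the limit $s\to 0^+$ by the method of Proposition \ref{prop geometric side convergence}, replacing the compact-support estimates with decay estimates from the $SQC(\Fg)$ topology: the Schwartz decay of $\theta$ controls the integrand at infinity, while Lemma \ref{lemma geometric side convergence} together with the local boundedness of $D^G(T)^{1/2}c_\theta(T)$ (Proposition 4.5.1 of \cite{B15}) controls it near the singular locus. Continuity of the extended linear form on $SQC(\Fg)$ then follows from the uniform boundedness principle in Appendix A.1 of \cite{B15}.

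\textbf{Parts (4) and (5).} Extend the identity $J_{Lie}(\theta)-I_{geom,Lie}(\theta)=c\cdot c_\theta(0)$ from $QC_c(\omega')$ (already proved in the previous proposition) to $SQC(\Fg)$ in two stages. First, given $\theta\in QC_c(\Fg)$, any sufficiently small $\lambda>0$ satisfies $\theta_\lambda\in QC_c(\omega')$ because $\mathrm{Supp}(\theta)$ is compact modulo conjugation. Applying the previous proposition to $\theta_\lambda$ and using the homogeneity of all three quantities under $\theta\mapsto\theta_\lambda$---Part (2) for $J_{Lie}$, Corollary \ref{geometric multiplicity descent}(2) for $I_{geom,Lie}$, and the identity $c_{\theta_\lambda}(0)=|\lambda|^{\delta(G)/2}c_\theta(0)$ derived from the germ expansion together with the $M_{\lambda,\delta(G)/2}$-invariance of $\hat j(\CO,\cdot)$ for regular $\CO$---all three terms scale by $|\lambda|^{\delta(G)/2}$, so dividing out yields the identity on $QC_c(\Fg)$. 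Second, extend from $QC_c(\Fg)$ to $SQC(\Fg)$ using the density of $QC_c(\Fg)$ in $SQC(\Fg)$ combined with continuity of all three linear forms $J_{Lie}$, $I_{geom,Lie}$, and $\theta\mapsto c_\theta(0)$. Part (4) is then the special case $c_\theta(0)=0$. I expect the main obstacle to be Part (3): specifically, verifying that the $s\to 0^+$ limit defining $I_{geom,Lie}$ is well-defined and continuous on the larger space $SQC(\Fg)$, where the compactness arguments of Proposition \ref{prop geometric side convergence} must be replaced by uniform Schwartz-type decay estimates.
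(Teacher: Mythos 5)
Your proposal is correct, and for parts (1)--(2) it coincides with the paper's proof: $J_{Lie}$ is defined by the explicit integral $\frac{1}{c(G,K,\BR)|W(T)|}\int_{\Ft}D^G(X)^{1/2}\hat\theta(X)\,dX$ from Theorem \ref{thm Lie algebra}, and your homogeneity computation giving the exponent $\delta(G)/2=\dim(K)$ is the right one (the paper's parenthetical ``$\dim(K)=\delta(G)$'' is a typo). For (3)--(5) you take a genuinely different route. The paper gets (3) by quoting Proposition 4.6.1(ii) of \cite{B15}, which extends a continuous linear form from $QC_c(\Fg)$ to $SQC(\Fg)$ using its $M_{\lambda,\dim(K)}$-homogeneity (Corollary \ref{geometric multiplicity descent}(2)); it gets (4) by reducing to $QC_c(\omega)$ as in Proposition 11.7.1(iv) of \cite{B15}; and it gets (5) by rerunning the $(M_{\lambda,\dim(K)}-1)\theta_1+\theta_2$ decomposition plus the outer-automorphism argument of Lemma \ref{lemma nilpotent orbit} on $SQC(\Fg)$. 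You instead prove (3) by a direct convergence estimate (Schwartz decay of $D^G(\cdot)^{1/2}c_\theta(\cdot)$ against the polynomially bounded $\Delta(\cdot)^{s-1/2}$ on the finitely many $\Ft$'s making up $\CS_{Lie}(G,K)$), and you get (4)--(5) by propagating the identity of the previous proposition from $QC_c(\omega')$ to $QC_c(\Fg)$ via scaling and then to $SQC(\Fg)$ by density. Your scaling step is sound: all three of $J_{Lie}$, $I_{geom,Lie}$ and $c_{(\cdot)}(0)$ are homogeneous of the same degree $\dim(K)=\delta(G)/2$, and $\theta_\lambda\in QC_c(\omega')$ for small $\lambda>0$ since $\omega'$ is a $G$-invariant neighborhood of $0$. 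Two points need explicit justification in your version. First, you should note that the $J_{Lie}$ of the present proposition agrees on $QC_c(\omega')$ with the one of Proposition \ref{geom prop 2} (both satisfy $J_{Lie}(\theta_F)=I_{Lie}(F)$, and the $\theta_F$ are dense in $QC_c(\omega')$ by Proposition 5.6.1 of \cite{B15}). Second, your passage from $QC_c(\Fg)$ to $SQC(\Fg)$ rests on density of $QC_c(\Fg)$ in $SQC(\Fg)$, which is not stated in the paper; it is true (and the continuity of $\theta\mapsto c_\theta(0)$ you need is built into the $SQC$ topology), but the cleaner substitute is exactly the decomposition $\theta=(M_{\lambda,\dim(K)}-1)\theta_1+\theta_2$ with $\theta_1\in SQC(\Fg)$, $\theta_2\in QC_c(\Fg)$ from Proposition 4.6.1(ii) of \cite{B15}, which kills the first term by homogeneity and reduces to the case you have already handled --- this is what the paper's one-line citations buy, at the cost of leaning entirely on \cite{B15}.
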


\begin{proof}
For the first part, by Theorem \ref{thm Lie algebra}, we just need to let
$$J_{Lie}(\theta)=\frac{1}{c(G,K,\BR)\cdot |W(T)|}\int_{\Ft} D^G(X)^{1/2}\hat{\theta}(X)dX.$$
This also proves the second part (note that $\dim(K)=\delta(G)$). The third part follows from Proposition 4.6.1(ii) of \cite{B15}. For (4), by the same argument as in Proposition 11.7.1(iv) of \cite{B15}, we may assume that $\theta\in QC_c(\omega)$. Then (4) has already been proved in the previous proposition. The last part follows from the the first four parts of the proposition together with the same argument as in the previous proposition. This finishes the proof of the proposition.
\end{proof}

\begin{prop}
Assume that $G$ is split. Then $J_{Lie}(\theta)=I_{geom,Lie}(\theta)$ for all $\theta\in SQC(\Fg)$.
\end{prop}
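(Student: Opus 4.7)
The previous proposition tells us that there exists $c \in \BC$ such that $J_{Lie}(\theta) - I_{geom,Lie}(\theta) = c \cdot c_\theta(0)$ for all $\theta \in SQC(\Fg)$, so the task reduces to showing $c = 0$. The plan is to exhibit a single explicit $\theta$ with $c_\theta(0) \neq 0$ for which the identity $J_{Lie}(\theta) = I_{geom,Lie}(\theta)$ can be checked by direct computation; this instantly forces $c = 0$.

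The natural candidate is a quasi-character parabolically induced from the split Cartan. Let $B = T_s N$ be a Borel with split maximal torus $T_s$; choose any nonzero Weyl-invariant Schwartz function $\phi \in \CS(\Ft_s)$ with $\phi(0) \neq 0$, and set $\theta = \Ind_B^G(\phi) \in SQC(\Fg)$, in the sense of Section 4.7 of \cite{B15}. Applying Proposition \ref{germ parabolic induction} at the origin---where $Z_G(0) = G$, $\CX_{T_s}(0) = \{0\}$, and all Weyl determinants equal $1$---gives $c_\theta(0) = \phi(0) \neq 0$, as required.

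With this $\theta$ I would compute both sides explicitly. For $J_{Lie}$, start from the formula $J_{Lie}(\theta) = \frac{1}{c(G,K,\BR)\cdot |W(T_s)|}\int_{\Ft_s} D^G(X)^{1/2} \hat\theta(X)\, dX$ established in the previous proposition; since Fourier transform commutes with parabolic induction, and $\hat\theta|_{\Ft_s}$ carries the standard $\delta_B^{1/2}$ factor that cancels $D^G(X)^{1/2}$, the integrand collapses to $\hat\phi(X)$, and Fourier inversion on $\Ft_s$ yields $J_{Lie}(\theta) = C_1 \cdot \phi(0)$ for an explicit constant $C_1$. For $I_{geom,Lie}(\theta)$, Proposition \ref{germ parabolic induction} kills $c_\theta(T)$ on strata of $\CS_{Lie}(G,K)$ whose generic elements are not $G$-conjugate into $\Ft_s$, and on the surviving strata reduces it to $\phi$ evaluated at such a conjugate, weighted by the indices $|Z_K(t) : K \cap G_t|$ and component counts $c(G_t, K\cap G_t, \BR)$. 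The resulting integral evaluates to $C_2 \cdot \phi(0)$ for another explicit constant.

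The main obstacle is the constant-matching $C_1 = C_2$: a finite combinatorial identity comparing Weyl-group orders, Jacobian normalizations, and connected-component counts across the Fourier-side description on $\Ft_s \subset \Fp$ and the germ-side description on compact Cartans in $\Fk$. This is precisely where the split-case identity $\dim(K) = \delta(G)/2$ enters, giving both expressions matching homogeneity and making a direct constant comparison possible. The bookkeeping is of the same flavor as the parabolic-induction matching already carried out in Section 3.4 for the proof that ``trace formula implies multiplicity formula,'' so I expect it to go through by the same method, concluding $C_1 = C_2$, hence $J_{Lie}(\theta) = I_{geom,Lie}(\theta)$, hence $c = 0$.
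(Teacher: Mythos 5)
Your proposal is, in substance, the same argument as the paper's: both reduce the problem to exhibiting a single quasi-character attached to the split Cartan with $c_\theta(0)\neq 0$ and checking the identity for it by direct computation against the explicit formula $J_{Lie}(\theta)=\frac{1}{c(G,K,\BR)|W(T)|}\int_{\Ft}D^G(X)^{1/2}\hat{\theta}(X)dX$ supplied by Theorem \ref{thm Lie algebra}. The paper takes $\theta=\hat{\theta}_0$ with $\theta_0$ the quasi-character of Section 11.9 of \cite{B15} --- which is precisely a character induced from the split Cartan, so your $\theta=\Ind_B^G(\phi)$ is the same object with $\phi=\hat{\varphi}$ --- and then quotes (11.9.4) of \cite{B15} for the evaluation of $c_\theta(0)$, whereas you would rederive that evaluation from Proposition \ref{germ parabolic induction} and Fourier inversion on $\Ft$; this is a legitimate, self-contained substitute. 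The one place where you overestimate the work is the ``constant-matching $C_1=C_2$'' you flag as the main obstacle: there is no multi-stratum bookkeeping on the geometric side at all. Every nonzero stratum of $\CS_{Lie}(G,K)$ lies in $\Fk$, and a nonzero semisimple element of $\Fk$ (purely imaginary $\mathrm{ad}$-eigenvalues) is never $G$-conjugate to an element of $\Ft\subset\Fp$ (real $\mathrm{ad}$-eigenvalues) once $Z_G$ is trivial; hence $\CX_{T}(X)=\emptyset$ and $c_\theta$ vanishes on $\CS_{Lie}(G,K)\setminus\{0\}$, so that $I_{geom,Lie}(\theta)=c(G,K,\BR)^{-1}c_\theta(0)=c(G,K,\BR)^{-1}\phi(0)$ outright --- this is exactly the paper's observation that $\Ft\cap\CS_{Lie}(G,K)=\{0\}$. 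On the other side, $\CX_T(X)=W(T)\cdot X$ for $X\in\Ft$ regular together with the $W(T)$-invariance of $\phi$ makes $D^G(X)^{1/2}\hat{\theta}(X)=|W(T)|\hat{\phi}(X)$, the $|W(T)|$ cancels, and Fourier inversion gives $J_{Lie}(\theta)=c(G,K,\BR)^{-1}\phi(0)$ as well. So $C_1=C_2=c(G,K,\BR)^{-1}$ and your argument closes.
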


\begin{proof}
We follow the same argument as in Section 11.9 of \cite{B15}. Let $\theta_0$ and $\theta=\hat{\theta}_0$ be the quasi-character defined in Section 11.9 of \cite{B15}. We have $c_\theta(0)\neq 0$. In order to show $c=0$, it is enough to show that $J_{Lie}(\theta)=I_{geom,Lie}(\theta)$. Since $\theta$ is supported in $\Ft$ and $\Ft\cap \CS_{Lie}(G,K)=\{0\}$, we have
$$I_{geom,Lie}(\theta)=\frac{1}{c(G,K,\BR)}c_\theta(0)=\frac{1}{c(G,K,\BR)\cdot |W(T)|}\int_{\Ft} D^G(X)^{1/2}\theta_0(X)dX$$
where the second equation is just (11.9.4) of \cite{B15}. Combining with Theorem \ref{thm Lie algebra}, we have $J_{Lie}(\theta)=I_{geom,Lie}(\theta)$. This finishes the proof of the proposition.
\end{proof}

\subsection{The proof of the geometric expansion}
In this subsection we will prove the geometric side of the trace formula. By the argument in the previous two subsections, it is enough to prove the geometric expansion
$$I_{Lie}(F,\theta_K)=I_{geom,Lie}(F,\theta_K)$$
on the Lie algebra for $F\in C_{c,scusp}^{\infty}(\Fg)$ and $\theta_K\in C^\infty(\Fk)^K$. We also know that the geometric expansion holds when $F$ and $\theta_K$ satisfy one of the following three conditions 
\begin{itemize}
\item $0\notin Supp(F)$;
\item $0\notin Supp(\theta_K)$;
\item $\theta_K\equiv 1$.
\end{itemize}

Let $\omega\subset \Fg$ be an $G$-excellent open neighborhood of $0$ and let $\Omega=\exp(\omega)$.

\begin{prop}
\begin{enumerate}
\item There exists a continuous linear form $J_{Lie}$ on $QC_c(\omega)\times C^\infty(\omega\cap \Fk)^K$ such that 
$$I_{Lie}(F,\theta_K)=J_{Lie}(\theta_F,\theta_K)$$
for all $F\in \CS_{scusp}(\omega)$ and $\theta_K\in C^\infty(\omega\cap \Fk)^K$. 
\item We have $J_{Lie}(\theta,\theta_K)=I_{geom,Lie}(\theta,\theta_K)$ for all $\theta\in QC_c(\omega)$ and $\theta_K\in C^\infty(\omega\cap \Fk)^K$ with $0\notin Supp(\theta)$ or $0\notin Supp(\theta_K)$.
\item We have $J_{Lie}(\theta_{\lambda},\theta_{K,\lambda})=|\lambda|^{\dim(K)} J_{Lie}(\theta,\theta_K)$ for all $\theta\in QC_c(\omega)$, $\theta_K\in C^\infty(\omega\cap \Fk)^K$ and $\lambda\in \BR^\times$ with $\theta_{\lambda}\in QC_c(\omega)$ and $\theta_{K,\lambda}\in C^\infty(\omega\cap \Fk)^K$.
\end{enumerate}
\end{prop}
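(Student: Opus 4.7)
The plan is to mirror the proof of Proposition \ref{geom prop 2}, carrying the $K$-type variable $\theta_K$ through alongside the quasi-character. The central tool is again Proposition \ref{geom prop 0}, but I would first rewrite it so that the full distribution $I(f,\theta_K')$ (rather than its difference with $I_{geom}$) is seen to depend only on $\theta_f$ and $\theta_K'$: set $J'(\theta,\theta_K'):=J(\theta,\theta_K')+m_{geom,G,K}(\theta,\theta_K')$ on $QC(G)\times C^\infty(K)^K$, so that $J'(\theta_f,\theta_K')=I(f,\theta_K')$ for all $f\in \CC_{scusp}(G)$. Joint continuity of $J'$ follows from Proposition \ref{geom prop 0} together with Proposition \ref{prop geometric side convergence}.

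For part (1), given $\theta_K\in C^\infty(\omega\cap\Fk)^K$ I would lift it to a function $\tilde\theta_K\in C^\infty(K)^K$ supported in $\Omega\cap K$ and such that $j_K^{1/2}\cdot\tilde\theta_{K,\omega}$ agrees with $\theta_K$ on $Supp(\theta_K)$, and define $J_{Lie}(\theta,\theta_K):=J'(j_G^{1/2}\cdot \theta_\Omega,\tilde\theta_K)$. Combined with Lemma \ref{lem descent}, this yields $I_{Lie}(F,\theta_K)=J_{Lie}(\theta_F,\theta_K)$ for every $F\in \CS_{scusp}(\omega)$. Well-definedness (independence of the lift $\tilde\theta_K$) and continuity then follow from Proposition 5.6.1 of \cite{B15}, which realizes every element of $QC_c(\omega)$ as some $\theta_F$ and so pins down $J_{Lie}(\theta,\theta_K)$ by the trace-formula identity on the analytic side.

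For part (2), I would split into two cases. If $0\notin Supp(\theta)$, pick $F\in \CS_{scusp}(\omega)$ with $\theta_F=\theta$ and $0\notin Supp(F)$; the corresponding group function $f$ has $\theta_f$ supported away from $1$, so Proposition \ref{geom prop 1} gives $I(f,\tilde\theta_K)=I_{geom}(f,\tilde\theta_K)$, and Corollary \ref{geometric multiplicity descent}(3) converts this directly into the claimed Lie-algebra identity. If instead $0\notin Supp(\theta_K)$, I would use a $K$-invariant partition of unity on $Supp(\theta_K)$ to localize to a small $K$-invariant neighborhood of some nonzero semisimple element $X_0\in \Fk$; semisimple descent along $X_0$ then reduces the identity to the trace formula for the smaller pair $(G_{X_0},K\cap G_{X_0})$, which is available by the running inductive hypothesis since $\dim(G_{X_0})<\dim(G)$.

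Part (3) should be immediate from part (1): choose $F\in \CS_{scusp}(\omega)$ with $\theta_F=\theta$ and $F_\lambda\in \CS_{scusp}(\omega)$ (possible when $\theta_\lambda\in QC_c(\omega)$), and combine the elementary change-of-variables identity $I_{Lie}(F_\lambda,\theta_{K,\lambda})=|\lambda|^{\dim(K)}I_{Lie}(F,\theta_K)$ with the identity $\theta_{F_\lambda}=(\theta_F)_\lambda$ already used in Proposition \ref{geom prop 2}(3), to conclude $J_{Lie}(\theta_\lambda,\theta_{K,\lambda})=|\lambda|^{\dim(K)}J_{Lie}(\theta,\theta_K)$. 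The main obstacle I foresee is the semisimple descent step in part (2): the bookkeeping required to track the Jacobian factors $j_G$ and $j_K$ through the descent around $X_0\in \Fk\setminus\{0\}$ and to verify that the trace-formula identity on $(G_{X_0},K\cap G_{X_0})$ pulls back, via the descent formulas of Section 3.3 of \cite{B15} and Corollary \ref{geometric multiplicity descent}, to precisely the claimed identity on $(G,K)$.
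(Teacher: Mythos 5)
Your overall architecture for parts (1) and (3), and for the first case of part (2), is the one the paper intends (its own proof is literally ``the same as Proposition \ref{geom prop 2}''): transport the group-level fact that $I(f,\theta_K)$ depends only on $(\theta_f,\theta_K)$ (Proposition \ref{geom prop 0}) to the Lie algebra via Lemma \ref{lem descent} to define $J_{Lie}$; get the case $0\notin Supp(\theta)$ of (2) from Proposition \ref{geom prop 1} together with Corollary \ref{geometric multiplicity descent}; and get (3) from the change of variables $I_{Lie}(F_\lambda,\theta_{K,\lambda})=|\lambda|^{\dim(K)}I_{Lie}(F,\theta_K)$. Those parts are fine (modulo the minor point that the lift $\tilde\theta_K$ must descend to $\theta_K$ exactly, not merely on $Supp(\theta_K)$).

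The genuine gap is in your treatment of the case $0\notin Supp(\theta_K)$. After localizing $\theta_K$ near the $K$-orbit of a semisimple $X_0\neq 0$ and descending the integral over $\Fk$ around $X_0$, the test function that appears on the smaller pair $(G_{X_0},K\cap G_{X_0})$ is (a Jacobian factor times) the restriction of ${}^gF$ to a neighborhood of $X_0$ in $\Fg_{X_0}$, and this restriction is \emph{not} strongly cuspidal on $\Fg_{X_0}$. The inductive hypothesis is a trace formula for strongly cuspidal test functions only, so it cannot be invoked at that point; the obstruction is not the Jacobian bookkeeping you flag but the loss of strong cuspidality under restriction. The repair is to replace $F$ before descending: (a) from the defining double integral, $I_{Lie}(F,\theta_K)$ depends only on the restriction of $F$ to $(Supp(\theta_K))^G$; (b) by the Lie-algebra version of Proposition \ref{geom prop 0} it depends only on $\theta_F$; combining (a), (b) with the surjectivity of $F\mapsto\theta_F$ onto quasi-characters of prescribed invariant support (Propositions 5.6.1 and 5.7.1 of \cite{B15}, applied to the difference of two functions whose quasi-characters agree near $(Supp(\theta_K))^G$), one finds that $I_{Lie}(F,\theta_K)$ depends only on $\theta_F$ restricted to a $G$-invariant neighborhood of $(Supp(\theta_K))^G$. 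Since $0$ does not lie in the closure of $(Supp(\theta_K))^G$, one may then replace $F$ by a strongly cuspidal function with the same quasi-character near $(Supp(\theta_K))^G$ and with $0\notin Supp(F)$; as $I_{geom,Lie}(\cdot,\theta_K)$ also only sees $\theta_F$ near $Supp(\theta_K)$, this reduces the second case of (2) to the first case, and no new descent computation or appeal to the induction on $(G_{X_0},K\cap G_{X_0})$ is needed. As written, your reduction to the smaller pair does not go through.
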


\begin{proof}
The proof is the same as Proposition \ref{geom prop 2}, we will skip it here.
\end{proof}

By the homogeneous property of the linear form $J_{Lie}$, we can extend it to all $\theta\in QC_c(\Fg)$ and $\theta_K\in C^\infty(\Fk)^K$. We get the following corollary.

\begin{cor}
\begin{enumerate}
\item There exists a continuous linear form $J_{Lie}$ on $QC_c(\Fg)\times C^\infty(\Fk)^K$ such that 
$$I_{Lie}(F,\theta_K)=J_{Lie}(\theta_F,\theta_K)$$
for all $F\in \CS_{scusp}(\Fg)$ and $\theta_K\in C^\infty(\Fk)^K$ with $Supp(F)$ being compact modulo conjugation. 
\item We have $J_{Lie}(\theta,\theta_K)=I_{geom,Lie}(\theta,\theta_K)$ for all $\theta\in QC_c(\Fg)$ and $\theta_K\in C^\infty(\Fk)^K$ with $0\notin Supp(\theta)$ or $0\notin Supp(\theta_K)$.
\item We have $J_{Lie}(\theta_{\lambda},\theta_{K,\lambda})=|\lambda|^{\dim(K)} J_{Lie}(\theta,\theta_K)$ for all $\theta\in QC_c(\Fg)$, $\theta_K\in C^\infty(\Fk)^K$ and $\lambda\in \BR^\times$.
\end{enumerate}
\end{cor}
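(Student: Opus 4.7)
The plan is to extend the linear form $J_{Lie}$ of the preceding Proposition, which is defined on $QC_c(\omega)\times C^\infty(\omega\cap\Fk)^K$, to the larger domain $QC_c(\Fg)\times C^\infty(\Fk)^K$ by rescaling. For any pair $(\theta,\theta_K)$ in the larger domain, the support of $\theta$ is compact modulo conjugation, so we may pick $\lambda\in\BR^\times$ of sufficiently small absolute value that $\theta_\lambda\in QC_c(\omega)$, while the restriction of $\theta_{K,\lambda}$ to $\omega\cap\Fk$ lies in $C^\infty(\omega\cap\Fk)^K$. Set
$$J_{Lie}(\theta,\theta_K):=|\lambda|^{-\dim(K)}J_{Lie}(\theta_\lambda,\theta_{K,\lambda}),$$
with the Proposition's $J_{Lie}$ on the right. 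Independence of the choice of $\lambda$ follows from part (3) of the Proposition: if $\lambda_1$ and $\lambda_2$ both work, then with $\mu=\lambda_2/\lambda_1$ we have $(\theta_{\lambda_1})_\mu=\theta_{\lambda_2}$ and both lie in $QC_c(\omega)$, so $J_{Lie}(\theta_{\lambda_2},\theta_{K,\lambda_2})=|\mu|^{\dim(K)}J_{Lie}(\theta_{\lambda_1},\theta_{K,\lambda_1})$, which is exactly what is needed for independence. Continuity of the extension follows from continuity of the original $J_{Lie}$ together with continuity of the scaling operation.

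For part (1), given $F\in\CS_{scusp}(\Fg)$ with support compact modulo conjugation, pick $\lambda$ small enough that $F_\lambda(X):=F(\lambda^{-1}X)$ lies in $\CS_{scusp}(\omega)$. A change of variable $X\mapsto\lambda X$ inside the defining double integral yields
$$I_{Lie}(F_\lambda,\theta_{K,\lambda})=|\lambda|^{\dim(K)}I_{Lie}(F,\theta_K).$$
On the other hand, the quasi-character attached to a strongly cuspidal function scales compatibly, $\theta_{F_\lambda}=(\theta_F)_\lambda$, as one verifies on regular semisimple elements by comparing orbital integrals and using $D^G(\lambda^{-1}X)=|\lambda|^{-\delta(G)}D^G(X)$ (the discriminant factor cancels the Jacobian of the change of variables in the orbital integral). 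Applying the preceding Proposition to $F_\lambda\in\CS_{scusp}(\omega)$ then gives $I_{Lie}(F_\lambda,\theta_{K,\lambda})=J_{Lie}((\theta_F)_\lambda,\theta_{K,\lambda})$, and dividing by $|\lambda|^{\dim(K)}$ and using the definition of the extension yields $I_{Lie}(F,\theta_K)=J_{Lie}(\theta_F,\theta_K)$. For part (2), if $0\notin Supp(\theta)$ (resp. $0\notin Supp(\theta_K)$), the same is true of $\theta_\lambda$ (resp. $\theta_{K,\lambda}$) for any admissible $\lambda$, so part (2) of the preceding Proposition gives $J_{Lie}(\theta_\lambda,\theta_{K,\lambda})=I_{geom,Lie}(\theta_\lambda,\theta_{K,\lambda})$. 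Combining this with the homogeneity of $I_{geom,Lie}$ recorded in Corollary \ref{geometric multiplicity descent}(2), namely $I_{geom,Lie}(\theta_\lambda,\theta_{K,\lambda})=|\lambda|^{\dim(K)}I_{geom,Lie}(\theta,\theta_K)$, and the definition of the extended $J_{Lie}$, we conclude $J_{Lie}(\theta,\theta_K)=I_{geom,Lie}(\theta,\theta_K)$.

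Part (3) is immediate from the construction: to evaluate $J_{Lie}(\theta_\mu,\theta_{K,\mu})$ choose a small $\lambda'$ so that $(\theta_\mu)_{\lambda'}=\theta_{\mu\lambda'}\in QC_c(\omega)$, and observe that $\lambda:=\mu\lambda'$ is an admissible scalar for $(\theta,\theta_K)$. Then
$$J_{Lie}(\theta_\mu,\theta_{K,\mu})=|\lambda'|^{-\dim(K)}J_{Lie}(\theta_{\mu\lambda'},\theta_{K,\mu\lambda'})=|\mu|^{\dim(K)}|\mu\lambda'|^{-\dim(K)}J_{Lie}(\theta_{\mu\lambda'},\theta_{K,\mu\lambda'})=|\mu|^{\dim(K)}J_{Lie}(\theta,\theta_K).$$
The only point requiring external input is the compatibility $\theta_{F_\lambda}=(\theta_F)_\lambda$ used in part (1), and this is the closest thing to an obstacle; however it is standard, since for strongly cuspidal $F$ the quasi-character is determined by its values on the regular semisimple locus where it reduces to an ordinary orbital integral, and the discriminant and Jacobian factors cancel.
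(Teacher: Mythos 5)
Your proposal is correct and is essentially the paper's argument: the paper dispatches this corollary in one line ("by the homogeneous property of the linear form $J_{Lie}$, we can extend it"), and your rescaling construction, well-definedness check via part (3) of the preceding Proposition, and the homogeneity of $I_{geom,Lie}$ from Corollary \ref{geometric multiplicity descent}(2) are exactly the intended details. The only small imprecision is your justification of $\theta_{F_\lambda}=(\theta_F)_\lambda$: on the non-elliptic part of $\Fg_{reg}$ the quasi-character of a strongly cuspidal function is a \emph{weighted} orbital integral rather than an ordinary one, but the identity still holds because the weight $v_{M(X)}(g)$ is invariant under scaling of $X$.
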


Let $V=QC(\Fg)\hat{\otimes} C^\infty(\Fk)^K$ realized as functions on $\Fg_{reg}\times \Fk$ and let $V_c$ be the subspace of $V$ consisting of functions whose support is compact modulo conjugation. We can extend the linear forms $J_{Lie}$ and $I_{geom,Lie}$ to $V_c$. The above Corollary implies that 

\begin{enumerate}
\item $J_{Lie}(\Theta)=I_{geom,Lie}(\Theta)$ for all $\Theta\in V_c$ with $0\notin Supp(\Theta)$;
\item $J_{Lie}(M_{\lambda,\dim(K)}\Theta)=J_{Lie}(\Theta)$ and $I_{geom,Lie}(M_{\lambda,\dim(K)}\Theta)=I_{geom,Lie}(\Theta)$ for all $\Theta\in V_c$ where $M_{\lambda,\dim(K)}\Theta(X,Y)=|\lambda|^{-\dim(K)}\Theta(\lambda^{-1}X,\lambda^{-1}Y)$.
\end{enumerate}

Our goal is to prove 
$$J_{Lie}(\theta,\theta_K)=I_{geom,Lie}(\theta,\theta_K)$$
for all $\theta\in QC_c(\theta)$ and $\theta_K\in C^\infty(\Fk)^K$. Since we have already proved the case when $\theta_K\equiv 1$. We may assume that $\theta_K(0)=0$ and it is compactly supported. By Lemma \ref{homogeneous projective}, we can write $\Theta=\theta\times \theta_K\in V_c$ as $(M_{\lambda,\dim(K)}-1) \Theta_1+\Theta_2$ for some $\Theta_1,\Theta_2\in V_c$ with $0\notin Supp(\Theta_2)$. By (1) and (2) above, we have
$$J_{Lie}(\Theta)=J_{Lie}(\Theta_2)=I_{geom,Lie}(\Theta_2)=I_{geom,Lie}(\Theta).$$
This finishes the proof of the geometric expansion.

\appendix

\section{The proof of Lemma \ref{lemma geometric side convergence}}

Our goal is to prove the following lemma.

\begin{lem}
The integral $\int_{\CS(G,K)} \Delta(t)^{s-1/2} dt$ is absolutely convergent when $\frac{1}{2}>Re(s)>0$. Moreover, there exists $d>0$ such that 
$$\lim_{s\rightarrow 0^+} s^d\int_{\CS(G,K)} \Delta(t)^{s-1/2} dt=0.$$
\end{lem}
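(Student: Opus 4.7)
The plan is to reduce the integral, via semisimple descent and Hironaka's resolution of singularities, to a finite sum of elementary one-variable integrals whose meromorphic behavior in $s$ can be computed explicitly. Since $\CS(G,K) = \bigsqcup_{T \in \CT(G,K)} T_K'/W(K,T)$ is a finite disjoint union (modulo measure zero) and each torus $T$ is compact, a partition of unity reduces the problem to estimating the local contribution in a small neighborhood of each point $t_0 \in T$. Using the standard semisimple descent $D^G(t) = \delta_G(t) \cdot D^{G_{t_0}}(t)$ and $D^K(t) = \delta_K(t) \cdot D^{K \cap G_{t_0}}(t)$, with $\delta_G, \delta_K$ smooth and bounded away from $0$ and $\infty$ on a neighborhood of $t_0$, we may replace $G$ by $H := G_{t_0}$ and reduce to estimating $\int_{|X| < \epsilon} \Delta_H(t_0 \exp X)^{s-1/2} \, dX$ for $X$ in a small neighborhood of $0$ in $\Ft$. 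The root-space decomposition of $\Fg_{t_0}$ under the compact torus $T^\circ$ then yields
\[
\Delta_H(t_0 \exp X) = U(X) \prod_{\alpha \in R(T^\circ, \Fh)} |\alpha(X)|^{m_\alpha^\Fp - m_\alpha^\Fk}
\]
near $X = 0$, for a smooth positive function $U$ and multiplicity differences $m_\alpha^\Fp - m_\alpha^\Fk \in \BZ$.

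Next, applying Hironaka's resolution of singularities to the divisor $\bigcup_\alpha \{\alpha(X) = 0\} \subset \Ft$ yields a proper real-analytic map $\pi\colon \widetilde V \to V$ from a compact manifold with the property that, in suitable local coordinates $(y_1, \ldots, y_n)$ near each point, each $\pi^*|\alpha(X)|$ is a monomial $\prod_i y_i^{a_{\alpha,i}}$ times a smooth positive unit and the Jacobian $|\det D\pi|$ is a monomial $\prod_i y_i^{c_i}$ times a smooth positive unit. Each local piece of the integral reduces, up to smooth positive factors, to
\[
\int_{[0,\epsilon]^n} \prod_i y_i^{e_i(s)} \, dy, \qquad e_i(s) := \Big(\sum_\alpha (m_\alpha^\Fp - m_\alpha^\Fk) a_{\alpha,i}\Big)\bigl(s - \tfrac{1}{2}\bigr) + c_i,
\]
which evaluates to $\prod_i \epsilon^{e_i(s)+1}/(e_i(s)+1)$ and is therefore meromorphic in $s$ with at worst simple poles on the zero loci of the affine functions $e_i(s) + 1$.

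Finally, at $s = 1/2$ the integrand $\Delta^0 = 1$ integrates to $\vol(\CS(G,K)) < \infty$, so $e_i(1/2) = c_i \geq 0$ in every chart. Combined with the structural bound $|m_\alpha^\Fp - m_\alpha^\Fk| \leq 2$ for roots of the compact torus $T^\circ$ on $\Fg/\Fg_T$ --- a consequence of the minimal-spherical condition $T \in \CT(G,K)$ --- this forces the zeros of each $e_i(s)+1$ to lie in $\{\Re(s) \leq 0\}$, giving absolute convergence of the integral on the open strip $\{0 < \Re(s) < 1/2\}$. Since the integral is a finite sum of meromorphic functions with at most a finite-order pole at $s = 0$, taking $d$ strictly larger than the maximal total pole order yields $\lim_{s \to 0^+} s^d \int_{\CS(G,K)} \Delta(t)^{s-1/2} \, dt = 0$.

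The main obstacle is verifying the multiplicity bound that keeps all the zeros of $e_i(s) + 1$ out of the strip $0 < \Re(s) < 1/2$, uniformly over all $T \in \CT(G, K)$ and all resolution charts of $\Ft$. This rests crucially on the minimal-spherical hypothesis on $(G_T, K_T)$: when $G_T$ is split modulo center, the compact torus $T^\circ$ sits as the compact part of the center of $G_T$, and its action on $\Fg/\Fg_T$ is restricted so that the defect between the multiplicities on the $\Fp$- and $\Fk$-parts is universally bounded. Translating this structural restriction into the concrete inequality needed on the linear combinations $\sum_\alpha (m_\alpha^\Fp - m_\alpha^\Fk) a_{\alpha,i}$ arising in the resolution is the technical heart of the argument; the details will be carried out in the appendix.
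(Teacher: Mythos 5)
Your strategy---monomialize $\Delta^{s-1/2}$ by resolution of singularities and read off the pole locations of the resulting Igusa-type local integrals---is genuinely different from the paper's. The paper never resolves anything: it inducts on $\dim G$, disposes of every point of $\CS(G,K)$ other than $1$ by semisimple descent to $(G_x,K\cap G_x)$, reduces the point $1$ to the case of a maximal torus $T$ of $G$ contained in $K$, and there combines the inductive hypothesis on a dyadic annulus $\omega'$ with the fact that $\Delta^{s-1/2}$ is homogeneous of degree $\geq -n+2ns$ (a consequence of $\dim K\geq\tfrac{1}{2}(\dim G-n)$, i.e.\ sphericity of $(G,K)$) to sum a geometric series. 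Your route could in principle work, but as written it has a genuine gap, and it sits exactly where you say the ``technical heart'' is and defer the details.

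The gap is the assertion that the zeros of $e_i(s)+1$ lie in $\{\Re(s)\leq 0\}$. Writing $M_i=\sum_\alpha(m_\alpha^\Fp-m_\alpha^\Fk)a_{\alpha,i}$, that zero is at $s=\tfrac12-\tfrac{c_i+1}{M_i}$, so what you actually need is the global inequality $M_i\leq 2(c_i+1)$ for every exceptional divisor with $M_i>0$. The per-root bound $|m_\alpha^\Fp-m_\alpha^\Fk|\leq 2$ does not imply this. Already for the blow-up at $0\in\Ft$ with $T$ a compact maximal torus of $G$ (where $a_{\alpha,1}=1$ for all $\alpha$ and $c_1=n-1$, $n=\dim\Ft$), the required inequality reads $\sum_\alpha(m_\alpha^\Fp-m_\alpha^\Fk)=\dim G-2\dim K+n\leq 2n$, i.e.\ $\dim K\geq\tfrac12(\dim G-n)$: a statement about the \emph{total} count of noncompact versus compact roots coming from sphericity, not about individual multiplicities (which are all $\pm1$ there, so your bound holds trivially yet decides nothing). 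For a divisor dominating a smaller stratum through $X\in\Ft$, the analogous inequality is the sphericity of the descended pair $(G_X,K\cap G_X)$, and it must be verified for every stratum and every chart of the resolution. So the inequality you propose to establish in an appendix is not the one that is needed, and the one that is needed is essentially the whole content of the lemma. The paper avoids this bookkeeping entirely: the inductive hypothesis hands it the stratum-wise estimates for free (every proper centralizer or $\theta$-stable Levi has smaller dimension), and the homogeneity computation is performed only once, at the deepest stratum. If you want to keep the resolution approach, you must replace the per-root bound by the stratum-wise degree/codimension inequality and prove it uniformly over all $T\in\CT(G,K)$; at that point you will have reproduced the induction that the paper runs, with the resolution adding work rather than saving it.
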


By induction, we may assume that the lemma holds for all redutive groups whose dimension is less than $G$. Since $\CS(G,K)$ is compact, it is enough to prove the absolute convergence and the inequality near an element $x\in K$. If $x\neq 1$, this follows from the inductional hypothesis (applied to the model $(G_x,K\cap G_x)$). Hence we only need to prove the case when $x=1$. In other words, we only need to prove the following statement. 

\begin{itemize}
\item[(1)] For all $T\in \CT(G,K)^{\circ}$, the integral $\int_{T^\circ} \Delta(t)^{s-1/2} dt$ is absolutely convergent when $\frac{1}{2}>Re(s)>0$ and there exists $d>0$ such that 
$$\lim_{s\rightarrow 0^+} s^d\int_{T^\circ} \Delta(t)^{s-1/2} dt=0.$$
\end{itemize}

If $T$ is not a maximal torus of $K$, then $G_T$ is not abelian. On the other hand, by the definition of $\CT(G,K)$, we know that $G_T$ is split modulo the center. Let $T'$ be a maximal split torus of $G_T$ that is $\theta$-stable, and let $L_T$ be the centralizer of $T'$ in $G$. Then we know that $L_T$ is a proper $\theta$-stable Levi subgroup of $G$ with $T\subset L$. Then (1) follows from the inductional hypothesis (applied to the model $(L_T,K\cap L_T)$). Hence it remains to prove the following statement.

\begin{itemize}
\item[(2)] Let $T$ be a maximal torus of $K$. Then the integral $\int_{T} \Delta(t)^{s-1/2} dt$ is absolutely convergent when $\frac{1}{2}>Re(s)>0$ and there exists $d>0$ such that 
$$\lim_{s\rightarrow 0^+} s^d\int_{T} \Delta(t)^{s-1/2} dt=0.$$
\end{itemize}

If $T$ is not a maximal torus of $G$ ($\iff$ $G$ does not have any discrete series), we can find a $\theta$-stable proper Levi subgroup $L$ of $G$ such that $T\subset L$. Then (2) follows from the inductional hypothesis (applied to the model $(L,K\cap L)$). From now on, assume that $T$ is a maximal torus of $G$ and let $n=\dim(T)=rank(G)$. We have $\dim(K)\geq \frac{\dim(G)-n}{2}$.

Let $\omega$ be a small convex neighborhood of $0$ in $\Ft$ and let $\omega'\subset \omega$ be a open subset such that 
\begin{itemize}
\item $0\notin \overline{\omega'}$;
\item $\omega-\{0\}=\cup_{i=0}^{\infty} \frac{1}{2^i}\cdot \omega'$.
\end{itemize} 
Here $\overline{\omega'}$ is the closure of $\omega'$ and $\frac{1}{2^i}\cdot \omega'=\{\frac{1}{2^i}X|\; X\in \omega'\}$. For example, we can let $\omega'$ to be any open subset contains $\omega-\frac{1}{2}\cdot \omega$ and is contained in $\omega-\frac{1}{3}\cdot \omega$. It is enough to show that 

\begin{itemize}
\item[(3)] the integral $\int_{\omega} \Delta(X)^{s-1/2} dX$ is absolutely convergent when $\frac{1}{2}>Re(s)>0$ and there exists $d>0$ such that $$\lim_{s\rightarrow 0^+} s^d\int_{\omega} \Delta(X)^{s-1/2} dX=0.$$
\end{itemize}

By induction (applied to $(G_X,G_X\cap K)$ where $X$ is any semisimple element of $\omega'$) we know that 
\begin{itemize}
\item[(4)] the integral $\int_{\omega'} \Delta(X)^{s-1/2} dX$ is absolutely convergent when $\frac{1}{2}>Re(s)>0$ and there exists $d>0$ such that $$\lim_{s\rightarrow 0^+} s^d\int_{\omega'} \Delta(X)^{s-1/2} dX=0.$$
\end{itemize}

For $s>0$, we know that $\Delta(t)^{s-1/2}$ is homogeneous of degree greater or equal to $-n+2ns$. This implies that
$$\int_{\omega} \Delta(X)^{s-1/2} dX\leq \sum_{i=0}^{\infty} \int_{\frac{1}{2^i}\cdot\omega'} \Delta(X)^{s-1/2} dX\leq \sum_{i=0}^{\infty} \frac{1}{2^{2nsi}}\int_{\omega'} \Delta(X)^{s-1/2} dX$$
$$=\frac{2^{ns}}{2^{ns}-1}\cdot \int_{\omega'} \Delta(X)^{s-1/2} dX.$$
Then (3) follows from (4), the inequality above and the fact that $$\lim_{s\rightarrow 0^+} s^2\cdot \frac{2^{ns}}{2^{ns}-1}=0.$$ 
This proves the lemma.

\end{document}